\title{Block structured matrix-sequences and their spectral and singular value canonical distributions: a general theory\thanks{All the authors are members of “Gruppo Nazionale per il Calcolo Scientifico" (INdAM-GNCS). The work is partially supported by INdAM - GNCS Project “Analisi e applicazioni di matrici strutturate (a blocchi)"  CUP E53C23001670001 and  by the European High-Performance Computing Joint Undertaking  (JU) under grant agreement No 955701. The JU receives support from the European Union’s Horizon 2020 research and innovation programme and Belgium, France, Germany, Switzerland. The work of Andrea Adriani is supported by MUR Excellence Department Project MatMod@TOV awarded to the Department of Mathematics, University of Rome Tor Vergata, CUP E83C23000330006.
The work of Isabella Furci is supported by $\#$NEXTGENERATIONEU (NGEU) and funded by the Ministry of University and Research (MUR), National Recovery and Resilience Plan (NRRP), project MNESYS (PE0000006) - A Multiscale integrated approach to the study of the nervous system in health and disease (DN. 1553 11.10.2022). Furthermore, Stefano Serra-Capizzano is grateful for the support of the Laboratory of Theory, Economics and Systems – Department of Computer Science at Athens University of Economics and Business and is grateful to
the "Como Lake center for AstroPhysics” of Insubria University.
}}
\author{Isabella Furci\footnotemark[1]
        \and Andrea Adriani\footnotemark[2]
        \and Stefano Serra-Capizzano\footnotemark[3]}
\shorttitle{Block structures: spectral and singular value  distributions}
\newcommand{\E}{\mathrm{e}}
\newcommand{\dd}{\!\mathrm{d}}
\begin{document}

\maketitle

\renewcommand{\thefootnote}{\fnsymbol{footnote}}
\footnotetext[1]{Department of Mathematics, University of Genoa, Genoa, Italy.}
\footnotetext[2]{Department of Mathematics, Tor Vergata University of Rome,  Italy.}
\footnotetext[3]{Department of Information Technology, Uppsala University, Uppsala, Sweden. Department of Science and High Technology. University of Insubria, Como, Italy.}

\begin{abstract}
In recent years more and more involved block structures appeared in the literature in the context of numerical approximations of complex infinite dimensional operators modeling real-world applications. In various settings, thanks the theory of generalized locally Toeplitz matrix-sequences, the asymptotic distributional analysis is well understood, but a general theory is missing when general block structures are involved. The central part of the current work deals with such a delicate generalization when blocks are of (block) unilevel Toeplitz type, starting from a problem of recovery with missing data. Visualizations, numerical tests, and few open problems are presented and critically discussed.
\end{abstract}

\begin{keywords}
Block structures, matrix-sequence, distribution of eigenvalue and singular values in the Weyl sense, block Toeplitz matrix, generating function.
\end{keywords}

\begin{AMS}
15A18, 34L20, 35P20, 15A69, 15B05
\end{AMS}

\section{Introduction}

When dealing with the approximation of systems of ordinary or partial differential equations (DEs or PDEs), when considering restoration of signal or images with incomplete data, when approaching the inpainting problem, block structured matrices are encountered and often the single blocks have structure themselves of (block) multilevel Toeplitz type or even more general when there is no space invariance; see \cite{GLT-blocks-d-dim,GLT-blocks-1-dim,MR3689933,MR3543002,dumb,tom,MR3904142}. In fact, when dealing with DEs and PDEs with variable coefficients the resulting associated matrix-sequences belong to block multilevel generalized locally Toeplitz $*$-algebra, where the size of the blocks is usually dictated by the used approximation scheme and by the dimensionality $d$ of the physical domain, while the number of levels is exactly $d$. The reader is referred to see \cite{GLT-blocks-d-dim,GLT-blocks-1-dim,MR3543002,GSI,GSII,tom} for the theory and the treatment in detail of few emblematic applications. We also refer to the references therein and specifically to \cite{EMI,MR4389580,MR4623368,MR3689933,MR4284081,dumb,MR3904142} for the development of numerical algorithms based on the theory and to \cite{block-ext1,block-ext2, block-ext3,block-ext4,block-ext5,block-ext6,block-ext7,block-ext8} for a wide range of context in which block structured matrices are encountered and dealt using different approaches.

In this work we focus our attention on the spectral distribution and on the singular value distribution in the Weyl sense in the block unilevel setting, with rectangular blocks of different dimensions. The presence of zeros in the distribution function and the order of the zeros give information on the conditioning and on the size and nature of the subspaces where the ill-conditioning arise (see e.g. \cite{MR1705731} and references therein). In turn the latter information is relevant for designing preconditioners in Krylov methods or projectors in multigrid methods with the goal of optimal solvers, having convergence rate independent of the matrix order of the coefficient matrices. Hence such results are also of practical and computational interest.

First, we give theoretical tools for studying the distributions - in the singular and eigenvalue sense - of general matrix-sequences. Subsequently, the new tools are employed in the context of block matrix-sequences, with the matrix-sequences of the blocks having themselves a block structure under various conditions, but all concerning block unilevel Toeplitz structures.

The paper is organized as follows. In Section \ref{sec:general_block} we deal with the problem setting and with general theoretical tools.
Section \ref{sec:Toeplitz_block} concerns the use of the new tools in the case where the blocks are (variations) of Toeplitz structures equipped with Lebesgue integrable generating functions.  Section \ref{sec:num} contains numerical tests and visualizations corroborating the theoretical findings, while conclusions and the discussion of few open problems are reported in Section \ref{sec:fin}.

\section{Problem setting and general theoretical tools}
\label{sec:general_block}

In the current section we fix two positive integers $s,t$ and we introduce a particular class of structured matrices of size $sn\times tn$. Let $\nu$ and $n_1,n_2,\dots,n_{\nu}$ be positive integers such that $n= \sum_{i=1}^{\nu} n_i$. We consider the structured matrix $A_n=[(A_n)_{ij}]_{i,j=1}^{\nu}$ given by

\begin{small}
\begin{equation}
\label{eq:A_general}
A_n=\left[
\begin{tikzpicture}[baseline=(m.center)]
\matrix (m) [matrix of math nodes,column sep=0.15em,row sep=0.15em] {
|[draw,dashed, minimum width=0.8cm, minimum height=0.8cm]| A_{11} & |[draw,dashed, minimum width=2cm, minimum height=0.8cm]| A_{12} & \cdots & |[draw,dashed, minimum width=1.2cm, minimum height=0.8cm]| A_{1\nu} \\
|[draw,dashed, minimum width=0.8cm, minimum height=2.4cm]| A_{21} & |[draw,dashed, minimum width=2 cm, minimum height=2.4cm]| A_{22} & \cdots & |[draw,dashed, minimum width=1.2cm, minimum height=2.4cm]| A_{2\nu} \\
\vdots & \vdots & \ddots & \vdots \\
|[draw,dashed, minimum width=0.8cm, minimum height=1.6cm]| A_{\nu 1} & |[draw,dashed,minimum width=2cm, minimum height=1.6cm]| A_{\nu2} & \cdots & |[draw,dashed, minimum width=1.2cm, minimum height=1.6cm]| A_{\nu\nu} \\
};
\end{tikzpicture}
\right].
\end{equation}
\end{small}
Each diagonal block $A_{ii}=(A_n)_{ii}$ is a matrix of size $sn_i\times tn_i$, for $i=1,\dots,\nu$, and the off-diagonal blocks $A_{ij}=(A_n)_{ij}$, $i\neq j$, are general $sn_{i}\times tn_{j}$ rectangular matrices. For $s=t$, the diagonal blocks $A_{ii}$, $i=1,\ldots,\nu$, are square matrices.

We consider  matrix-sequence $\{A_n\}_n$ of increasing size where, for a fixed $n$, each $A_n$ is of the form in (\ref{eq:A_general}). In this setting we are interested in  the following natural assumptions
\begin{enumerate}
\item the number of blocks is $\nu^2$, $(A_n)_{ij}$, $i,j=1,\dots,\nu$, and the parameter $\nu\ge 2$ is fixed independent of $n$ (the case $\nu=1$ is not of interest for obvious reasons);
\item $\lim_{n,n_i\rightarrow \infty}=\frac{n_i}{n}=c_i\in (0,1)$, $i=1,\dots,\nu$,  $c_1+c_2+\cdots + c_\nu=1$.
\end{enumerate}
Few remarks are in order. The first item is in fact a consequence of the second, but we stress it here because its meaning is important. The parameter $\nu$ is fixed and decides the whole structure of the matrix-sequence $\{A_n\}_n$. In the second item the equality $c_1+c_2+\cdots c_\nu=1$ is a consequence of the previous limit relations and of the fact that $c_i$ are all positive. Furthermore the positivity of all $c_i$ informs us that all the blocks are essential in determining the spectral and singular value distribution in the Weyl sense; see Remark \ref{rmrk:asumptions 1-2 vs extradim theorems} for more details. We report such notions below in a very general formulation, since various concrete examples need such a generality.

\begin{definition} \cite{GLT-blocks-d-dim,GSI,GSII,MR0890515,TyZ}
\label{def:distributions}
Let ${f}:\Omega \to\mathbb{C}^{s\times t}$ be a  measurable function defined on a measurable set $\Omega\subset\mathbb R^\ell$ with $\ell\ge 1$,
$0<m_\ell(\Omega)<\infty$, $r=\min\{s,t\}$.
Let $\mathcal{C}_0(\mathbb{K})$ be the set of continuous functions with compact support over $\mathbb{K}\in \{\mathbb{C}, \mathbb{R}_0^+\}$ and let $\{A_{{n}}\}_{{n}}$ be a sequence of matrices with singular values $\sigma_j(A_{{n}})$, $j=1,\ldots,n$.
\begin{itemize}
  \item The matrix-sequence  $\{A_{{n}}\}_{n}$ is distributed as ${f}$ in the sense of the singular values, and we write
  \begin{align*}
      \{A_{{n}}\}_{{n}}\sim_\sigma{f},\nonumber
     \end{align*}
    if the following limit relation holds for all $F\in\mathcal{C}_0(\mathbb{R}_0^+)$

  \begin{align}
		  \lim_{{n}\to\infty}\frac{1}{{{n}}}\sum_{j=1}^{{{n}}}F(\sigma_j(A_{n}))=
		  \frac1{m_\ell(\Omega)}\int_{\Omega}  \frac{\sum_{i=1}^{r}F\left(\sigma_i\left({f} \left(\boldsymbol{\theta}\right)\right)\right)}{r}\,\dd{\boldsymbol{\theta}}.\label{eq:distribution_sv}
		 \end{align}

    The function ${f}$ is called the {singular value symbol} which describes the singular value distribution of the matrix-sequence $ \{A_{{n}}\}_{{n}}$.
	\item In the case where $s=t$ and any $A_n$ is square of size $n$ with eigenvalues $\lambda_j(A_{{n}})$, $j=1,\ldots,n$, the matrix-sequence  $\{A_{{n}}\}_{n}$ is distributed as ${f}$ in the sense of the eigenvalues, and we write
  \begin{align*}
      \{A_{{n}}\}_{{n}}\sim_\lambda{f},\nonumber
     \end{align*}
    if the following limit relation holds for all $F\in\mathcal{C}_0(\mathbb{C})$

  \begin{align}
		  \lim_{{n}\to\infty}\frac{1}{{{n}}}\sum_{j=1}^{{{n}}}F(\lambda_j(A_{n}))=
		  \frac1{m_\ell(\Omega)}\int_{\Omega}  \frac{\sum_{i=1}^{s}F\left(\lambda_i\left({f} \left(\boldsymbol{\theta}\right)\right)\right)}{s}\,\dd{\boldsymbol{\theta}}.\label{eq:distribution_eig}
		 \end{align}

    The function ${f} $ is called the eigenvalue symbol and it describes the eigenvalue distribution of the matrix-sequence
    $ \{A_{{n}}\}_{{n}}$.
  \end{itemize}
\end{definition}

\begin{remark}
\label{rmrk:multidimensional}
As it will happen in concrete situations, e.g. for the approximation of multidimensional problems, often it is more convenient to use a more general notation. In that case the matrix-size $n$ is replaced by a quantity ${d_{n}}$ with ${d_{k}}<d_m$ if $k<m$ and/or by the use of a multi-index notation with $n$ replaced by the multi-index $\mathbf{n}$. The multidimensional/multilevel framework is treated in \cite{blocking-gen}.
\end{remark}

In the general framework outlined by Definition   \ref{def:distributions}, the following mathematical tools can be exploited. All of them are related to the idea of the extradimensional approach, indicated in a private conversation  between Tyrtyshnikov and the third author more than two decades ago, and which represents a fruitful framework (see \cite{MR3904142}, beginning of Section 4.2, and \cite{pre-prequel}, Section 2).

\begin{theorem}{\rm \cite{MR3904142}[Theorem 4.3]}\label{th:extradim eigs}
Let $ \{X_{{n}}\}_{{n}}$ be a given Hermitian matrix-sequence with $X_n$ of order $n$. Let $P_n\in \mathbb{C}^{n\times n'}$ be a compression matrix such that $n'<n$,
$P_n^*P_n=I_{n'}$, and let us consider $Y_{n'}=P_n^*X_nP_n$. Under the assumption that
\[
\lim_{{n,n'}\to\infty}\frac{n'}{{{n}}}=1,
\]
i.e. $n=n'+o(n)$, we have
\[
 \{X_{{n}}\}_{{n}}\sim_\lambda{f}\ \ \ \ {\rm iff} \ \ \ \ \{Y_{{n'}}\}_{{n'}}\sim_\lambda{f}.
\]
\end{theorem}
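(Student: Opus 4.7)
The plan is to exploit Cauchy's interlacing inequalities, which apply because $Y_{n'}=P_n^*X_nP_n$ is unitarily equivalent to a principal submatrix of $X_n$. First, I would complete $P_n$ to a unitary matrix $U_n=[P_n\mid Q_n]\in\mathbb{C}^{n\times n}$ by choosing an orthonormal basis for the orthogonal complement of $\mathrm{range}(P_n)$; then $U_n^*X_nU_n$ shares its eigenvalues with $X_n$ and has $Y_{n'}$ as its leading $n'\times n'$ principal block. Cauchy interlacing for Hermitian matrices therefore gives, with eigenvalues ordered nondecreasingly, $\lambda_j(X_n)\le\lambda_j(Y_{n'})\le\lambda_{j+(n-n')}(X_n)$ for $j=1,\dots,n'$. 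Equivalently, introducing the counting functions $N_A(t):=\#\{j:\lambda_j(A)\le t\}$, one obtains the uniform estimate $|N_{X_n}(t)-N_{Y_{n'}}(t)|\le n-n'$ for every $t\in\mathbb{R}$.

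Next, for any test function $F\in\mathcal{C}_0(\mathbb{C})$, I would note that on Hermitian spectra only the restriction $F|_{\mathbb{R}}$ is relevant, hence continuous with compact support. Set $S_n:=\tfrac{1}{n}\sum_{j=1}^n F(\lambda_j(X_n))$ and $T_{n'}:=\tfrac{1}{n'}\sum_{j=1}^{n'}F(\lambda_j(Y_{n'}))$; the goal is $S_n-T_{n'}\to 0$, which automatically yields equivalence of (\ref{eq:distribution_eig}) in both directions. The estimate splits into two pieces: a change-of-normalization piece controlled by $|1-n'/n|\,\|F\|_\infty\to 0$, and a numerator discrepancy $\tfrac{1}{n}\bigl|\sum F(\lambda_j(X_n))-\sum F(\lambda_j(Y_{n'}))\bigr|$. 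The latter can be written as the Stieltjes-type integral $\tfrac{1}{n}\int F\,d(N_{X_n}-N_{Y_{n'}})$; integration by parts, together with the compact support of $F$, bounds it by $(n-n')\|F'\|_{L^1}/n=o(1)$ whenever $F\in C^1_c(\mathbb{R})$. For a general $F\in\mathcal{C}_0(\mathbb{R})$, one approximates uniformly by $C^1_c$ functions and uses compactness of $\mathrm{supp}(F)$ to transfer the bound.

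The main technical obstacle is precisely this last counting-function estimate: although the integration-by-parts computation is tight, passing from smooth $F$ to merely continuous $F$ requires a density argument that is uniform in $n$. Once this is in place, combining the two pieces gives $S_n-T_{n'}\to 0$, and therefore $S_n\to L$ if and only if $T_{n'}\to L$, where $L$ denotes the right-hand side of (\ref{eq:distribution_eig}); this is the claimed equivalence.
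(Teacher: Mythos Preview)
The paper does not supply its own proof of this statement: Theorem~\ref{th:extradim eigs} is quoted verbatim from \cite{MR3904142}[Theorem~4.3] and stated without argument. The only related proof in the paper is that of the singular-value analogue, Theorem~\ref{th:extradim sv}, which simply reduces to the present theorem. So there is nothing in the paper to compare your proposal against.

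That said, your approach is correct and is the standard interlacing argument. Completing $P_n$ to a unitary matrix makes $Y_{n'}$ a principal submatrix of a unitary conjugate of $X_n$; Cauchy interlacing then gives $\lambda_j(X_n)\le\lambda_j(Y_{n'})\le\lambda_{j+(n-n')}(X_n)$, and from this the uniform counting-function bound $0\le N_{X_n}(t)-N_{Y_{n'}}(t)\le n-n'$ follows. The integration-by-parts estimate for $C^1_c$ test functions is valid: although $N_{X_n}-N_{Y_{n'}}$ does not vanish at $+\infty$, the boundary terms still disappear because $F$ itself has compact support.

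Your worry that the density step is ``the main technical obstacle'' is overstated. Since the approximation $F_\epsilon\to F$ is in the supremum norm and both $S_n$ and $T_{n'}$ are averages, one has $|S_n(F)-S_n(F_\epsilon)|\le\|F-F_\epsilon\|_\infty$ and the same for $T_{n'}$, \emph{uniformly in $n$}. A three-term splitting then yields $\limsup_n|S_n(F)-T_{n'}(F)|\le 2\|F-F_\epsilon\|_\infty$ for every $\epsilon$, hence the limit is zero. No delicate uniformity issue arises, and the equivalence follows.
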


\ \\
\noindent

The following result is the singular value counterpart of Theorem  \ref{th:extradim eigs} and its proof relies on it. A specific singular value version for the square case, with an alternative proof, can be found in \cite{MR3904142}[Corollary 4.4].

\begin{theorem}\label{th:extradim sv}
Let $ \{X_{{n}}\}_{{n}}$ be a given (rectangular) matrix-sequence with $X_n$ of order $n^{(1)}\times n$, $n^{(1)}\ge n$. Let $P_n\in \mathbb{C}^{n\times n'}$, $P_{n^{(1)}}\in \mathbb{C}^{n^{(1)}\times n^{(1),'}}$ be two compression matrices with $n'<n$, $n^{(1),'}< n^{(1)}$ and
$P_n^*P_n=I_{n'}$, $P_{n^{(1)}}^* P_{n^{(1)}}=I_{n^{(1),'}}$, and let us consider $Y_{n'}=P_{n^{(1)}}^*X_nP_n$. Under the assumption that
\[
\lim_{{n,n'}\to\infty}\frac{n'}{{{n}}}=\lim_{{n^{(1)},n^{(1),'}}\to\infty}\frac{n^{(1),'}}{{{n^{(1)}}}}=1,
\]
i.e. $n=n'+o(n)$, $n^{(1)}=n^{(1),'}+ o(n^{(1)})$, we have
\[
 \{X_{{n}}\}_{{n}}\sim_\sigma{f}\ \ \ \ {\rm iff} \ \ \ \ \{Y_{{n'}}\}_{{n'}}\sim_\sigma{f}.
\]
\end{theorem}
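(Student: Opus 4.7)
My plan is to lift the singular value statement for the rectangular sequences to an eigenvalue statement for their Hermitian dilations and then invoke Theorem~\ref{th:extradim eigs}. To this end I would first introduce
\[
\tilde X_N = \begin{bmatrix} 0 & X_n \\ X_n^* & 0 \end{bmatrix}, \qquad \tilde Y_M = \begin{bmatrix} 0 & Y_{n'} \\ Y_{n'}^* & 0 \end{bmatrix},
\]
of orders $N = n + n^{(1)}$ and $M = n' + n^{(1),'}$, both Hermitian. The relevant standard fact is that the spectrum of the Hermitian dilation of a $p \times q$ rectangular matrix with $p \ge q$ consists of the pairs $\pm \sigma_j$ for $j=1,\ldots,q$ together with $p-q$ additional zero eigenvalues, which is precisely what converts singular value data into eigenvalue data.

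I would then verify that $\tilde Y_M$ is a genuine compression of $\tilde X_N$ in the form required by Theorem~\ref{th:extradim eigs}. The natural candidate isometry is the block-diagonal
\[
Q = \begin{bmatrix} P_{n^{(1)}} & 0 \\ 0 & P_n \end{bmatrix} \in \mathbb{C}^{N \times M},
\]
for which a direct block computation yields $Q^* Q = I_M$ and $Q^* \tilde X_N Q = \tilde Y_M$, while the standing hypotheses force $N - M = (n - n') + (n^{(1)} - n^{(1),'}) = o(n) + o(n^{(1)}) = o(N)$. Theorem~\ref{th:extradim eigs} then delivers, for every matrix-valued symbol $g$, the biconditional
\[
\{\tilde X_N\}_N \sim_\lambda g \iff \{\tilde Y_M\}_M \sim_\lambda g.
\]

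The last and delicate step is to bridge the singular value statement for the rectangular sequences to the eigenvalue statement for their dilations. The natural candidate symbol is
\[
\tilde f(\boldsymbol{\theta}) = \begin{bmatrix} 0 & f(\boldsymbol{\theta}) \\ f(\boldsymbol{\theta})^* & 0 \end{bmatrix},
\]
whose eigenvalues at each $\boldsymbol{\theta}$ are $\pm \sigma_i(f(\boldsymbol{\theta}))$ for $i=1,\ldots,r$, together with $|s-t|$ zeros. Testing $\sim_\lambda$ against even test functions $G(x) = F(|x|)$ built from $F \in \mathcal{C}_0(\mathbb{R}_0^+)$ produces
\[
\frac{1}{N}\sum_k G(\lambda_k(\tilde X_N)) = \frac{2n}{N}\cdot \frac{1}{n}\sum_j F(\sigma_j(X_n)) + \frac{n^{(1)} - n}{N}\,F(0),
\]
together with the analogous decomposition on the symbol side; the resulting equivalence $\{X_n\}_n \sim_\sigma f \iff \{\tilde X_N\}_N \sim_\lambda \tilde f$, combined with the same equivalence for $Y$ and the biconditional above, closes the argument. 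The hard part will be exactly this last bookkeeping: the $n^{(1)}-n$ spurious zero eigenvalues in the dilation have to align with the $|s-t|$ zeros in the eigenvalues of $\tilde f$, which forces a compatibility between the ratio $n^{(1)}/n$ and the symbol dimensions $s/t$; this is built into the block-structured applications of interest, and in full generality one extracts convergent subsequences of $n/N$ and argues by contradiction using Theorem~\ref{th:extradim eigs} along each subsequential limit.
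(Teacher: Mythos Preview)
Your route via Hermitian dilations is sound in spirit but differs from the paper's argument. The paper works instead with the \emph{Gram matrices} $X_n^{*}X_n$ and $Y_{n'}^{*}Y_{n'}$: since $n^{(1)}\ge n$, the $n\times n$ Hermitian matrix $X_n^{*}X_n$ has eigenvalues exactly $\sigma_j(X_n)^2$, $j=1,\ldots,n$, with no surplus zeros, so the passage between (\ref{eq:distribution_sv}) and (\ref{eq:distribution_eig}) is the plain substitution $F\leftrightarrow F(\sqrt{\cdot})$. Theorem~\ref{th:extradim eigs} is then invoked on these square Hermitian sequences, and the whole proof fits in two lines.

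What each approach buys: your block-diagonal $Q$ produces an \emph{exact} compression identity $Q^{*}\tilde X_N Q=\tilde Y_M$, whereas in the Gram-matrix route $Y_{n'}^{*}Y_{n'}=P_n^{*}X_n^{*}P_{n^{(1)}}P_{n^{(1)}}^{*}X_nP_n$ differs from the honest compression $P_n^{*}(X_n^{*}X_n)P_n$ by a correction of rank at most $n^{(1)}-n^{(1),'}$ that still has to be absorbed. Conversely, the Gram-matrix route completely avoids the zero-counting you flag as ``the hard part'': there are no $n^{(1)}-n$ spurious eigenvalues to reconcile with the $|s-t|$ zeros of $\tilde f$. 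Your proposed subsequence-and-contradiction patch for that reconciliation does not actually close the gap in full generality---along any subsequence with $n/N\to 0$ the dilated sequence satisfies $\{\tilde X_N\}\sim_\lambda 0$ \emph{regardless} of the behaviour of $\{X_n\}$, so Theorem~\ref{th:extradim eigs} yields no information there. In the block-Toeplitz setting of Section~\ref{sec:Toeplitz_block} the aspect ratio $n^{(1)}/n$ is fixed and your argument goes through cleanly; for the abstract statement the paper's shortcut through $X_n^{*}X_n$ is the more economical choice precisely because the spurious-zero bookkeeping never arises.
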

\begin{proof}
The singular values squared of $X_{{n}}$ correspond to the eigenvalues of $X_{{n}}^*X_{{n}}$ and the singular values  squared of $Y_{{n'}}$ are the eigenvalues of $Y_{{n'}}^*Y_{{n'}}$. As a consequence, exploiting Theorem \ref{th:extradim eigs} and taking into account Definition \ref{def:distributions} with $F(\sqrt{\cdot})$ in (\ref{eq:distribution_eig}) in place of $F(\cdot)$, we  derive  (\ref{eq:distribution_sv}) for any arbitrary test function $F$. This completes the proof of the desired result.
\end{proof}

We highlight that the assumption
\[
\lim_{{n,n'}\to\infty}\frac{n'}{{{n}}}=1
\]
in Theorem \ref{th:extradim eigs} and the assumption
\[
\lim_{{n,n'}\to\infty}\frac{n'}{{{n}}}=\lim_{{n^{(1)},n^{(1),'}}\to\infty}\frac{n^{(1),'}}{{{n^{(1)}}}}=1
\]
in Theorem \ref{th:extradim sv} are both crucial, and it's straightforward to construct counterexamples for any kind of violation.
For example we consider $X_n={\rm diag}_{i=1,\ldots,n}a\left(\frac{i}{n}\right)$ with $a$ real-valued Riemann integrable over $[0,1]$, choosing $n'$ such that
\[
\lim_{{n,n'}\to\infty}\frac{n'}{{{n}}}=\frac{1}{2},
\]
so violating the above assumptions of Theorem \ref{th:extradim eigs} and of
 Theorem \ref{th:extradim sv}. We define $P_n$ as
 \[
 P_n=\left[\begin{array}{c}
I_{n'} \\
\hline
O_{(n-n')\times n'}
\end{array} \right].
 \]
We have $P_n^*P_n=I_{n'}$ as required and $\{X_{{n}}\}_{{n}}\sim_{\lambda,\sigma} {a}$, but $\{Y_{{n'}}\}_{{n'}}\sim_{\lambda,\sigma} {\hat a}$ with $Y_{{n'}}=P_n^* X_n P_n$, $\hat a (x)=a\left(\frac{x}{2}\right)$, $x\in [0,1]$ and clearly $\hat a \neq a$ unless $a$ is constant.
In fact, in the case where ${f}$ is essentially a scalar, the requirement on the dimensions can be substantially weakened by maintaining one of the implications as proven in \cite{pre-prequel}.

\begin{theorem}\label{th:extradim eigs-f const}
Let $ \{X_{{n}}\}_{{n}}$ be a given Hermitian matrix-sequence with $X_n$ of order $n$. Let $P_n\in \mathbb{C}^{n\times n'}$ be a compression matrix such that $n'<n$, $P_n^*P_n=I_{n'}$, and let us consider $Y_{n'}=P_n^*X_nP_n$. Under the assumption that
\[
\liminf_{{n,n'}\to\infty}\frac{n'}{{{n}}}>0,
\]
we have
\[
 \{X_{{n}}\}_{{n}}\sim_\lambda \alpha I_s\ \ \ \ {\rm implies} \ \ \ \ \{Y_{{n'}}\}_{{n'}}\sim_\lambda \alpha I_s,
\]
with $\alpha$ real constant.
\end{theorem}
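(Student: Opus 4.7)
The plan is to reduce the conclusion to a clustering statement about the shifted Hermitian sequences, and then invoke a standard compression inequality for singular values. Set $Z_n=X_n-\alpha I_n$, which is Hermitian. Since $\alpha I_s$ is a constant symbol, the hypothesis $\{X_n\}_n\sim_\lambda\alpha I_s$ reduces via Definition \ref{def:distributions} to $\lim_{n\to\infty}\frac1n\sum_{j=1}^n F(\lambda_j(X_n))=F(\alpha)$ for every $F\in\mathcal{C}_0(\mathbb{C})$, which is the same as $\{Z_n\}_n\sim_\lambda 0$. Observing that
\[
P_n^* Z_n P_n = P_n^* X_n P_n-\alpha P_n^*P_n = Y_{n'}-\alpha I_{n'},
\]
it is enough to prove that $W_{n'}:=P_n^* Z_n P_n$ satisfies $\{W_{n'}\}_{n'}\sim_\lambda 0$.

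Second, I would translate the distributional statement for Hermitian sequences into a counting/clustering condition. A standard approximation argument using bump functions $F\in\mathcal{C}_0(\mathbb R)$ equal to $1$ on $[-\varepsilon,\varepsilon]$ and supported in $[-2\varepsilon,2\varepsilon]$ yields the equivalence
\[
\{Z_n\}_n\sim_\lambda 0\ \Longleftrightarrow\ \forall\varepsilon>0,\ \#\{j\le n:|\lambda_j(Z_n)|>\varepsilon\}=o(n).
\]
The same argument will be used in the reverse direction at the end to recover the distribution relation for $W_{n'}$ from its clustering estimate.

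Third comes the compression inequality, which is the quantitative heart of the proof. Since $P_n^*P_n=I_{n'}$ we have $\|P_n\|=1$, and sub-multiplicativity of ordered singular values gives, for $j=1,\dots,n'$,
\[
\sigma_j(P_n^* Z_n P_n)\le \|P_n^*\|\,\sigma_j(Z_n P_n)\le \|P_n^*\|\,\sigma_j(Z_n)\,\|P_n\|=\sigma_j(Z_n).
\]
Since $W_{n'}$ and $Z_n$ are both Hermitian, $\sigma_j=|\lambda_j|$ (with the absolute-value ordering), so for every $\varepsilon>0$
\[
\#\{j:|\lambda_j(W_{n'})|>\varepsilon\}\le \#\{j:|\lambda_j(Z_n)|>\varepsilon\}=o(n).
\]

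Finally, the weakened dimensional hypothesis $\liminf n'/n>0$ is used here and only here: it implies $n'\ge cn$ eventually for some $c>0$, so $o(n)=o(n')$, and therefore $\#\{j:|\lambda_j(W_{n'})|>\varepsilon\}=o(n')$. Running the counting-to-distribution approximation in reverse — splitting $\frac1{n'}\sum_j F(\lambda_j(W_{n'}))-F(0)$ into the contribution from $|\lambda_j|\le\varepsilon$ (controlled by the modulus of continuity of $F$) and the contribution from $|\lambda_j|>\varepsilon$ (controlled by $2\|F\|_\infty\cdot o(n')/n'$) — yields $\{W_{n'}\}_{n'}\sim_\lambda 0$, hence $\{Y_{n'}\}_{n'}\sim_\lambda\alpha I_s$. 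The main conceptual point, and the reason the full equivalence of Theorem \ref{th:extradim eigs} cannot be recovered here, is that the compression $P_n^*X_nP_n$ generally rescales the profile of the distribution function; for a \emph{constant} symbol only the clustering locus matters, and any positive fraction $c>0$ of the eigenvalues is enough to preserve it, which is precisely what $\liminf n'/n>0$ encodes.
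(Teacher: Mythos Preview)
Your argument is correct. The paper itself does not supply a proof of this theorem: it is stated with the indication ``as proven in \cite{pre-prequel}'', so there is no in-paper derivation to compare against. Your reduction to the zero-clustering case via the shift $Z_n=X_n-\alpha I_n$, followed by the singular value compression estimate $\sigma_j(P_n^*Z_nP_n)\le\sigma_j(Z_n)$ and the passage $o(n)=o(n')$ through $\liminf n'/n>0$, is clean and complete.

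One minor remark: since $Z_n$ is Hermitian and $P_n$ has orthonormal columns, $W_{n'}=P_n^*Z_nP_n$ is unitarily equivalent to a principal submatrix of $Z_n$, so Cauchy interlacing gives directly that the $j$-th largest \emph{eigenvalue} of $W_{n'}$ lies between the $j$-th and $(j+n-n')$-th largest eigenvalues of $Z_n$; this yields the counting inequality for the sets $\{\lambda_j>\varepsilon\}$ and $\{\lambda_j<-\varepsilon\}$ separately without passing through singular values. Your route via $\sigma_j(AB)\le\|A\|\sigma_j(B)$ is equally valid and slightly more portable (it is what underlies the rectangular version in Theorem~\ref{th:extradim sv-f const}).
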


The following is the singular value version of Theorem \ref{th:extradim eigs-f const} and it heavily relies on it; see again \cite{pre-prequel}.

\begin{theorem}\label{th:extradim sv-f const}
Let $ \{X_{{n}}\}_{{n}}$ be a given (rectangular) matrix-sequence with $X_n$ of order $n^{(1)}\times n$, $n^{(1)}\ge n$. Let $P_n\in \mathbb{C}^{n\times n'}$, $P_{n^{(1)}}\in \mathbb{C}^{n^{(1)}\times n^{(1),'}}$ be two compression matrices with $n'<n$, $n^{(1),'}< n^{(1)}$ and
$P_n^*P_n=I_{n'}$, $P_{n^{(1)}}^* P_{n^{(1)}}=I_{n^{(1),'}}$, and let us consider $Y_{n'}=P_{n^{(1)}}^*X_nP_n$. Under the assumption that
\[
\liminf_{{n,n'}\to\infty}\frac{n'}{{{n}}}>0, \ \liminf_{{n^{(1)},n^{(1),'}}\to\infty}\frac{n^{(1),'}}{{{n^{(1)}}}}>0,
\]
we have
\[
 \{X_{{n}}\}_{{n}}\sim_\sigma \alpha I_s\ \ \ \ {\rm implies} \ \ \ \ \{Y_{{n'}}\}_{{n'}}\sim_\sigma \alpha I_s,
\]
with $\alpha=0$, while for $\alpha$ real positive constant we need to make the second assumption stronger i.e.
$\lim_{{n^{(1)},n^{(1),'}}\to\infty}\frac{n^{(1),'}}{{{n^{(1)}}}}=1$.
\end{theorem}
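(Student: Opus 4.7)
The strategy echoes the proof of Theorem \ref{th:extradim sv}: translate the singular-value statement into a Hermitian eigenvalue statement and invoke Theorem \ref{th:extradim eigs-f const}. What is new is that Theorem \ref{th:extradim eigs-f const} is available only for constant matrix-valued symbols $\alpha I_s$, so the two cases $\alpha=0$ and $\alpha>0$ require different reductions; this is what forces the asymmetric hypotheses.

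For the case $\alpha=0$, I would employ the standard Hermitian augmentation
\[
\tilde X_n=\begin{bmatrix}0 & X_n\\ X_n^* & 0\end{bmatrix},\qquad \tilde Y_{n'}=\begin{bmatrix}0 & Y_{n'}\\ Y_{n'}^* & 0\end{bmatrix},
\]
of sizes $n^{(1)}+n$ and $n^{(1),'}+n'$ respectively, together with the block-diagonal compression $\tilde P=\mathrm{diag}(P_{n^{(1)}},P_n)$, which satisfies $\tilde P^*\tilde P=I_{n^{(1),'}+n'}$ and $\tilde P^*\tilde X_n\tilde P=\tilde Y_{n'}$. The spectrum of $\tilde X_n$ consists of $\pm\sigma_i(X_n)$ for $i=1,\dots,n$ together with $n^{(1)}-n$ zeros; a direct calculation with any real-valued $F\in\mathcal{C}_0(\mathbb{R})$ (then extended to $\mathcal{C}_0(\mathbb{C})$) shows that $\{X_n\}_n\sim_\sigma 0$ implies $\{\tilde X_n\}_n\sim_\lambda 0$, regardless of the behavior of $n^{(1)}/n$. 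The hypotheses on $P_n$ and $P_{n^{(1)}}$ guarantee $\liminf(n^{(1),'}+n')/(n^{(1)}+n)>0$, so Theorem \ref{th:extradim eigs-f const} yields $\{\tilde Y_{n'}\}_{n'}\sim_\lambda 0$, and plugging in $F(|\cdot|)$ for $F\in\mathcal{C}_0(\mathbb{R}_0^+)$ unfolds this back into $\{Y_{n'}\}_{n'}\sim_\sigma 0$.

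For $\alpha>0$ the same augmentation would produce an eigenvalue symbol supported on $\{\alpha,-\alpha\}$, which is not of the form $\beta I_k$, placing Theorem \ref{th:extradim eigs-f const} out of reach. I would instead pass to the Gram matrix and decompose
\[
Y_{n'}^*Y_{n'}=P_n^*X_n^*X_nP_n-P_n^*X_n^*\bigl(I_{n^{(1)}}-P_{n^{(1)}}P_{n^{(1)}}^*\bigr)X_nP_n.
\]
Applying Definition \ref{def:distributions} with the test function $F(\sqrt{\,\cdot\,})$ converts $\{X_n\}_n\sim_\sigma\alpha I_s$ into $\{X_n^*X_n\}_n\sim_\lambda\alpha^2 I_s$; the subtracted term $E_n:=X_n^*(I_{n^{(1)}}-P_{n^{(1)}}P_{n^{(1)}}^*)X_n$ is positive semidefinite with $\mathrm{rank}(E_n)\le n^{(1)}-n^{(1),'}$. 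The strengthened hypothesis $n^{(1),'}/n^{(1)}\to 1$ forces this rank to be $o(n^{(1)})=o(n)$ in the natural regime $n^{(1)}=\Theta(n)$, so $E_n$ is a vanishing-rank Hermitian perturbation preserving the spectral distribution, whence $\{X_n^*X_n-E_n\}_n\sim_\lambda\alpha^2 I_s$. Theorem \ref{th:extradim eigs-f const} applied to the compression by $P_n$, using $\liminf n'/n>0$, then gives $\{Y_{n'}^*Y_{n'}\}_{n'}\sim_\lambda\alpha^2 I_s$; extracting positive square roots returns $\{Y_{n'}\}_{n'}\sim_\sigma\alpha I_s$.

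The main obstacle is pinning down cleanly why the row-side assumption must be tightened. Heuristically, when $\alpha>0$ the columns of $X_n$ are near-orthogonal of length $\alpha$, and the projector $I_{n^{(1)}}-P_{n^{(1)}}P_{n^{(1)}}^*$ strips genuine mass from each of them; only if the removed subspace has dimension $o(n)$ relative to the $n\times n$ Gram matrix can that deficit disappear in the limit, and this is precisely what $n^{(1),'}/n^{(1)}\to 1$ secures. For $\alpha=0$ the sign symmetry of the zero symbol neutralizes the excess zeros that the augmentation introduces, so the weaker $\liminf>0$ on both ratios is enough.
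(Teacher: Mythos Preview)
The paper does not supply its own proof of this theorem; it only states that the result ``heavily relies on'' Theorem~\ref{th:extradim eigs-f const} and defers to \cite{pre-prequel}. So there is no in-paper argument to compare against, but your approach is fully consistent with the surrounding material: it mirrors the paper's proof of Theorem~\ref{th:extradim sv}, which also reduces to the Hermitian eigenvalue case via $X_n^*X_n$ and the test-function substitution $F(\sqrt{\cdot})$.

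Your two-track treatment is sound and the split between $\alpha=0$ and $\alpha>0$ is well motivated. The block antidiagonal augmentation for $\alpha=0$ is clean: the extra $n^{(1)}-n$ zero eigenvalues are absorbed by the zero symbol regardless of how $n^{(1)}$ grows relative to $n$, and the check $\liminf(n^{(1),'}+n')/(n^{(1)}+n)>0$ follows immediately from the two separate $\liminf$ hypotheses. For $\alpha>0$ the Gram-matrix route with the low-rank defect $E_n=X_n^*(I-P_{n^{(1)}}P_{n^{(1)}}^*)X_n$ is the right idea. The one soft spot is your appeal to ``the natural regime $n^{(1)}=\Theta(n)$'' to turn $\mathrm{rank}(E_n)\le n^{(1)}-n^{(1),'}=o(n^{(1)})$ into $o(n)$: the theorem as stated does not impose any upper control on $n^{(1)}/n$, so strictly this is an added hypothesis. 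In every application in the paper (notably Lemma~\ref{lemma1:from fasino-tilli-d=1}) the row and column dimensions are comparable, so the restriction is harmless in context, but you should either state it as an assumption or note that without it the rank bound need not be $o(n)$. With that caveat, the argument is complete.
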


\begin{remark}\label{rmrk:asumptions 1-2 vs extradim theorems}
Theorem \ref{th:extradim eigs} and Theorem \ref{th:extradim sv} clarify the importance of assumption 1. and 2. at the beginning of the section.
If there exists an index $j\in \{1,\ldots,\nu\}$ such that $c_j=0$, then this is equivalent to say that $n_j=o(n)$  and hence all the blocks
$A_{j,k}, A_{k,j}$, $k=1,\ldots,\nu$, represent globally a $o(n)$-rank perturbation that can be neglected as stated in the extradimensional Theorem \ref{th:extradim eigs} and Theorem \ref{th:extradim sv}, when deducing the global eigenvalue or singular value distribution of the matrix-sequence $\{A_{{n}}\}_{{n}}$.
\end{remark}

\section{Structured matrices with block unilevel Toeplitz blocks}
\label{sec:Toeplitz_block}
While in Section \ref{sec:general_block} we introduce structured matrices with general blocks, in the following we focus  on matrices of the form (\ref{eq:A_general}) with the additional requirement that each block $A_{i,j}$ possesses a block unilevel Toeplitz structure. Specifically, we examine cases where these Toeplitz matrices are linked to a univariate matrix-valued function $f$, called \textit{generating function}.
Refer to \cite{BS,MR0890515} for the notion in the scalar-valued case and \cite{GLT-blocks-1-dim} for the matrix-valued setting.

\begin{definition}
\label{def:toeplitz}
Let $f$ be a $s\times t$ matrix-valued function belonging to $L^1([-\pi,\pi],s\times t)$ and periodically extended to the whole real line. A Toeplitz matrix $T_{n}(f) \in \mathbb{C}^{sn \times tn}$ associated with $f$ is an $sn\times tn$ matrix defined as
  \begin{align*}
  T_n(f)=\left[\hat f_{i-j}\right]_{i,j=1}^n,\nonumber
 \end{align*}
where
  \begin{align*}
  \hat{f}_{k}=\frac1{2\pi}\int_{-\pi}^{\pi}\!\!f(\theta)\,\E^{-k\iota \theta}\dd\theta \in \mathbb{C}^{s \times t},\qquad k\in\mathbb Z,\qquad \iota^2=-1,
 \end{align*}
are the Fourier coefficients of $f$. For $s=t$ the matrix $T_{n}(f)$ is square.
\end{definition}

\begin{remark}
\label{rmrk:rectangular_toeplitz}
 From Definition \ref{def:toeplitz}, for $n'\neq n$ it is possible to define a (inherently) rectangular $sn\times tn'$  matrix $T_{n,n'}(f)$ as
\begin{equation}
\label{eq:rectangular_Toeplitz}
T_{n,n'}(f)= \left[\hat f_{i-j}\right]_{\stackrel{\substack{i=1,\ldots,n \\ j=1,\ldots,n'}}{}
}.
\end{equation}
\end{remark}
Consequently, we are interested in  matrices of the form (\ref{eq:A_general}) where
\begin{itemize}
\item the diagonal blocks $A_{ii}$ are $sn_i\times tn_i$ Toeplitz matrices associated to the functions $f_{i,i}$, $i=1,\dots, \nu$, respectively;
\item the off-diagonal blocks $A_{ij}$, $i\neq j$, are $sn_{i}\times tn_{j}$ rectangular matrices $T_{n_i,n_j}(f_{i,j})$ according to the definition in (\ref{eq:rectangular_Toeplitz}) of Remark \ref{rmrk:rectangular_toeplitz}.
\end{itemize}
That is, $A_n$ possesses the form
\begin{small}
\begin{equation}
\label{eq:A_Toeplitz}
A_n=\left[
\begin{tikzpicture}[baseline=(m.center)]
\matrix (m) [matrix of math nodes,column sep=0.15em,row sep=0.15em] {
|[draw,dashed, minimum width=1.8cm, minimum height=2cm]| T_{n_1}(f_{1,1}) & |[draw,dashed, minimum width=2.4cm, minimum height=2cm]| T_{n_1,n_2}(f_{1,2}) & \cdots & |[draw,dashed, minimum width=1.6cm, minimum height=2cm]| T_{n_1,n_{\nu}}(f_{1,{\nu}}) \\
|[draw,dashed, minimum width=1.6cm, minimum height=2.4cm]| T_{n_2,n_1}(f_{2,1}) & |[draw,dashed, minimum width=2.4cm, minimum height=2.4cm]| T_{n_2}(f_{2,2}) & \cdots & |[draw,dashed, minimum width=1.6cm, minimum height=2.4cm]| T_{n_2,n_{\nu}}(f_{2,{\nu}}) \\
\vdots & \vdots & \ddots & \vdots \\
|[draw,dashed, minimum width=1.6cm, minimum height=1.6cm]| T_{n_{\nu},n_1}(f_{{\nu},1}) & |[draw,dashed,minimum width=2.4cm, minimum height=1.6cm]| T_{n_{\nu},n_2}(f_{{\nu},2}) & \cdots & |[draw,dashed, minimum width=1.8cm, minimum height=1.6cm]| T_{n_{\nu}}(f_{\nu,\nu}) \\
};
\end{tikzpicture}
\right],
\end{equation}
\end{small}
where $f_{i,j}$, $i,j=1,\dots,\nu$, are functions of the type described in Definition \ref{def:toeplitz}.

\begin{theorem}[\cite{MR1671591}]\label{thm:szego-d=1}
Let ${f}:[-\pi,\pi]\to\mathbb{C}^{s \times t}$, ${f}\in L^1([-\pi,\pi],s\times t)$.   Then
	\begin{equation}
		\label{eq:szego-sv-d=1}
		\{T_{n}({f})\}_{{n}}\sim_\sigma~{f}.
	\end{equation}
If in addition the generating function is a Hermitian-valued function, then necessarily $s=t$ and 	
	\begin{equation}
		\label{eq:szego-eig-d=1}
		\{T_{n}({f})\}_{{ n}}\sim_\lambda~{f}.
	\end{equation}
\end{theorem}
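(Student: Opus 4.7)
The plan is the classical two-stage strategy originating with Tilli: first establish the statement for matrix-valued trigonometric polynomials via an exact block diagonalization, then extend to $L^1$ symbols by density together with a singular value / eigenvalue perturbation argument.

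For the polynomial case, given $p(\theta)=\sum_{|k|\le r}\hat p_k\,e^{ik\theta}$, I would introduce the block-circulant matrix $C_n(p)\in\mathbb{C}^{sn\times tn}$ with $(C_n(p))_{ij}=\hat p_{(i-j)\bmod n}$. The scaled block Fourier matrices $F_n\otimes I_t$ and $F_n\otimes I_s$ simultaneously block-diagonalize $C_n(p)$, producing a direct sum of the evaluations $p(\theta_j^{(n)})$ at the equispaced grid $\theta_j^{(n)}=2\pi j/n$, $j=0,\dots,n-1$. Hence the singular values of $C_n(p)$ coincide with the union of the singular values of these evaluations, and a Riemann-sum argument based on the continuity of $F\in\mathcal{C}_0(\mathbb{R}_0^+)$ composed with the continuous functions $\sigma_i(p(\cdot))$ yields $\{C_n(p)\}_n\sim_\sigma p$. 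Since $T_n(p)-C_n(p)$ has rank at most $2r\max(s,t)=O(1)$ as $n\to\infty$, Mirsky's perturbation inequality for singular values transfers the distribution, giving $\{T_n(p)\}_n\sim_\sigma p$. If $p$ is Hermitian (forcing $s=t$), the same diagonalization produces real eigenvalues $\lambda_i(p(\theta_j^{(n)}))$, and Weyl's inequalities combined with the bounded-rank perturbation yield $\{T_n(p)\}_n\sim_\lambda p$.

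For a generic $f\in L^1([-\pi,\pi],s\times t)$, I would approximate $f$ by matrix-valued trigonometric polynomials $p_m$ in $L^1$ norm, using the Fej\'er means of its Fourier series so that Hermitianness of $f$ is preserved by $p_m$ in the second part of the statement. The cornerstone is the trace-norm bound $\|T_n(f-p_m)\|_1\le n\,\|f-p_m\|_{L^1}$ combined with the inequality $|\mathrm{tr}\,F(A)-\mathrm{tr}\,F(B)|\le C_F\,\|A-B\|_1$ valid for Lipschitz $F\in\mathcal{C}_0$ (a standard mollification reduces the general continuous test function to the Lipschitz case). These give
\[
\limsup_{n\to\infty}\Big|\tfrac1{n}\textstyle\sum_j F(\sigma_j(T_n(f)))-\tfrac1{n}\sum_j F(\sigma_j(T_n(p_m)))\Big|\le C_F\,\|f-p_m\|_{L^1},
\]
which tends to $0$ as $m\to\infty$. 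Combining with the polynomial case and dominated convergence on the integral side (to identify the limit with the target integral associated to $f$) completes the singular value statement; the Hermitian/eigenvalue case is identical after replacing Mirsky by Weyl.

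The step I expect to be the main obstacle is the density argument: one must secure the trace-norm bound on $T_n(f-p_m)$ linear in $n$ and proportional to $\|f-p_m\|_{L^1}$, together with a genuinely rectangular version of the Szeg\H{o} machinery in which averaging is taken over $r=\min(s,t)$ singular values as prescribed by Definition \ref{def:distributions}. Once the trace-norm estimate is secured and the Fej\'er means are shown to preserve Hermitianness, the remaining ingredients (Mirsky/Weyl inequalities, Riemann sums, and dominated convergence) are classical, and the two-stage scheme closes without further technical surprises.
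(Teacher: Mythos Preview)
The paper does not prove this theorem: it is quoted verbatim as a known result from Tilli \cite{MR1671591} and used as a black box throughout, so there is no ``paper's own proof'' to compare against. Your sketch is a faithful outline of the classical Tilli argument (polynomial case via block-circulant diagonalization plus bounded-rank correction, then $L^1$ density via trace-norm perturbation and Fej\'er means), and it is correct at the level of a proof plan; but for the purposes of this paper the statement is simply imported from the literature, and no proof is expected or supplied.
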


The existence of distributions analogous to (\ref{eq:distribution_sv}) has been established   for block multilevel Hankel matrices generated by the Fourier coefficients of a matrix-valued multivariate measure whose singular part is finitely supported \cite{MR1740439}. In the context of this work, we present a version where the generating function is univariate and matrix-valued.

\begin{theorem}[\cite{MR1740439}]\label{thm:fasino-tilli-d=1}
Let ${f}:[-\pi,\pi]\to\mathbb{C}^{s \times t}$, { ${f}\in L^1([-\pi,\pi],s\times t)$}. Then
	\begin{equation}
		\label{eq:fasino-tilli-d=1}
		\{H_{n}({f})\}_{{n}}\sim_\sigma~0,
	\end{equation}
	where $H_{n}({f})$ is the Hankel matrix given by
	\begin{equation}
	\label{eq:hankel_def-d=1}
	H_{n}({f})= \sum_{k=1}^{2n-1} K_{n}^{(k)} \otimes  \hat{{f}}_{k},
	\end{equation}
	and  $K_{n}^{(k)}$ denotes the matrix of order $n$ whose $(i, j )$ entry equals
1 if $j + i = k + 1$ and equals zero otherwise, while $\hat{{f}}_{k}$, $k \in\mathbb Z$,  are the Fourier coefficients of $f$ as given in Definition \ref{def:toeplitz}.
\end{theorem}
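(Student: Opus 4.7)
The plan is to reduce the general case to trigonometric polynomial generating functions (for which the Hankel matrix has uniformly bounded rank) and then extend by a density argument based on a Schatten-$1$ trace norm estimate.

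First, for a matrix-valued trigonometric polynomial $p$ of degree $N$, we have $\hat p_k = 0$ for $|k|>N$, so only $k\in\{1,\ldots,N\}$ contribute to the sum in \eqref{eq:hankel_def-d=1}, and the nonzero entries of $H_n(p)$ lie in the top-left triangular corner determined by $i+j-1\le N$. Counting nonzero block rows yields $\operatorname{rank}(H_n(p))\le Ns$ uniformly in $n>N$. Hence $\{H_n(p)\}_n\sim_\sigma 0$ follows immediately from Definition~\ref{def:distributions}, since at most $Ns$ singular values are nonzero while the matrix is of size $sn\times tn$.

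Second, I would establish the Schatten-$1$ bound $\|H_n(g)\|_1\le C\,n\,\|g\|_{L^1}$ for every $g\in L^1([-\pi,\pi],s\times t)$. This follows from the integral representation
\[
H_n(g)=\frac{1}{2\pi}\int_{-\pi}^{\pi} E_n(\theta)\otimes g(\theta)\,\dd\theta,\qquad E_n(\theta)=\bigl[\E^{-\iota(i+j-1)\theta}\bigr]_{i,j=1}^{n},
\]
together with the observations that $E_n(\theta)$ is a rank-one matrix with $\|E_n(\theta)\|_1=n$ and that the Schatten-$1$ norm is multiplicative on Kronecker products. The triangle inequality for Schatten-$1$ norms then yields the desired bound, with $C$ absorbing the equivalence constant between the matrix norm on blocks used to define $L^1$ and the Schatten-$1$ norm.

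Third, the two steps are combined via a density argument. Given $f\in L^1$, choose trigonometric polynomials $p_m\to f$ in the $L^1$-norm. Weyl's singular value inequality applied to $H_n(f)=H_n(p_m)+H_n(f-p_m)$ gives
\[
\sigma_{k+r_m}(H_n(f))\le \sigma_k(H_n(f-p_m)),\qquad r_m=\operatorname{rank}(H_n(p_m))\le N_m s,
\]
and, combining this with Step~2, for every $\varepsilon>0$
\[
\frac{1}{n}\,\#\bigl\{k\colon \sigma_k(H_n(f))>\varepsilon\bigr\}\;\le\;\frac{r_m}{n}+\frac{C\,\|f-p_m\|_{L^1}}{\varepsilon}.
\]
Letting first $n\to\infty$ and then $m\to\infty$ forces the singular values of $H_n(f)$ to cluster at $0$, which is equivalent to $\{H_n(f)\}_n\sim_\sigma 0$ by the standard clustering characterization of Definition~\ref{def:distributions}.

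The main obstacle is the Schatten-$1$ estimate of Step~2, since one has to pin down the dimensional constant in the rectangular, matrix-valued setting; once that bound is in hand, the remainder is routine. Equivalently, the whole argument can be wrapped inside the a.c.s.\ (approximating class of sequences) machinery of the block GLT theory: the bound of Step~2 shows that $\{H_n(p_m)\}_n$ is an a.c.s.\ for $\{H_n(f)\}_n$, and closure of $\sim_\sigma$ under a.c.s.\ limits yields the conclusion --- but the elementary integral-representation route above keeps the core estimate fully transparent.
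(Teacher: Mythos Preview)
The paper does not supply its own proof of this theorem: it is quoted verbatim from \cite{MR1740439} (Fasino--Tilli) and used as a black box, so there is no in-paper argument to compare against. Your three-step outline---bounded rank for trigonometric polynomial generating functions, the Schatten-$1$ bound $\|H_n(g)\|_1\le Cn\|g\|_{L^1}$ via the rank-one integral kernel $E_n(\theta)$, and the density/Weyl clustering argument---is correct and is essentially the strategy of the original Fasino--Tilli proof specialized to absolutely continuous measures in one level. The only cosmetic point is that the normalizing constant in the clustering count should be $rn$ with $r=\min\{s,t\}$ rather than $n$, but this does not affect the conclusion.
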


\begin{theorem}\label{equal-blocks-basic}
Assume that a structure $A_n$ as in (\ref{eq:A_Toeplitz}) is given with $n_1=n_2=\cdots=n_\nu=n(\nu)=\frac{n}{\nu}$. For any positive integer $\mu$ define the following permutation matrix $\Pi_\mu$
\[
\Pi_\mu=\Pi_{n,\nu,\mu}
= \begin{bmatrix}
I_\nu \otimes \mathbf{e}_{1}^T \\
I_\nu \otimes \mathbf{e}_{2}^T \\
\vdots \\
I_\nu \otimes \mathbf{e}_{n(\nu)}^T
\end{bmatrix} \otimes I_\mu
\]
is such that
\[
T_n(F)=\Pi_s A_n \Pi_t^T
\]
with $F=\left(f_{i,j}\right)_{i,j=1}^\nu$. Hence
\[
\{A_{n}\}_{{n}}\sim_\sigma~{F}
\]
and
\[
\{A_{n}\}_{{n}}\sim_\lambda~{F}
\]
if in addition $F$ is Hermitian-valued (which implies necessarily $s=t$).
\end{theorem}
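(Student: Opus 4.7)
The plan is to reduce the claim to the block Toeplitz distribution result of Theorem~\ref{thm:szego-d=1} by observing that, when all block sizes coincide, $A_n$ is nothing but the block Toeplitz matrix $T_n(F)$ re-indexed by a pair of perfect-shuffle permutations. Since $\Pi_s$ and $\Pi_t$ are permutation (hence unitary) matrices, this re-indexing preserves singular values in general and eigenvalues whenever $\Pi_s=\Pi_t$.

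First, I would verify the identity $T_n(F)=\Pi_s A_n\Pi_t^T$ by a direct entrywise check. The $(k,\ell)$-th $s\nu\times t\nu$ block of $T_n(F)$ is $\hat F_{k-\ell}=\bigl(\widehat{f_{i,j}}_{\,k-\ell}\bigr)_{i,j=1}^\nu$, so its $((i,a),(j,b))$ entry is $[\widehat{f_{i,j}}_{\,k-\ell}]_{a,b}$ with $k,\ell\in\{1,\dots,n(\nu)\}$, $i,j\in\{1,\dots,\nu\}$, $a\in\{1,\dots,s\}$, $b\in\{1,\dots,t\}$. On the other hand, the $(i,j)$ outer block of $A_n$ equals $T_{n(\nu),n(\nu)}(f_{i,j})$, whose $((k,a),(\ell,b))$ entry is the same scalar. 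Thus both matrices store exactly the same $s\times t$ scalar entries, and the two natural row/column orderings differ only by swapping the outer index $(i,j)$ with the inner index $(k,\ell)$. A short Kronecker-product computation shows that the building block $M$ in the definition of $\Pi_\mu=M\otimes I_\mu$ is the permutation characterised by $M_{(k,i),(j,\ell)}=\delta_{ij}\delta_{k\ell}$, so that $\Pi_\mu$ implements precisely this outer/inner swap at row level (with $\mu=s$) and at column level (with $\mu=t$).

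Second, with the identity in hand the conclusion is immediate. Since $\Pi_s,\Pi_t$ are unitary, $A_n=\Pi_s^T T_n(F)\Pi_t$ has the same singular values as $T_n(F)$, and applying the singular-value statement \eqref{eq:szego-sv-d=1} of Theorem~\ref{thm:szego-d=1} to $T_n(F)$ yields $\{A_n\}_n\sim_\sigma F$. In the Hermitian case $F$ is square, forcing $s=t$ and hence $\Pi_s=\Pi_t$; the identity becomes a unitary similarity, so $A_n$ and $T_n(F)$ share their spectra and the eigenvalue statement \eqref{eq:szego-eig-d=1} of Theorem~\ref{thm:szego-d=1} delivers $\{A_n\}_n\sim_\lambda F$.

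The only step requiring care is the index bookkeeping behind the perfect-shuffle identity, but this is really a short Kronecker-algebra exercise rather than a substantive obstacle. It is worth noting that the extradimensional tools of Section~\ref{sec:general_block} are not needed here: the two matrices coincide exactly under re-indexing, and no asymptotic negligibility is being invoked. Those tools will instead be essential in the more delicate setting in which the block sizes $n_i$ are allowed to differ, because then $A_n$ and a candidate block Toeplitz matrix $T_n(F)$ can be matched only up to rectangular compressions of lower-order dimension.
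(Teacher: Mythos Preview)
Your proposal is correct and follows essentially the same route as the paper: establish the perfect-shuffle identity $T_n(F)=\Pi_s A_n\Pi_t^T$, then invoke Theorem~\ref{thm:szego-d=1} and the unitarity of permutation matrices to transfer the singular-value and (in the Hermitian case, via $s=t$ and hence similarity) eigenvalue distributions. The only cosmetic difference is that the paper imports the permutation identity from \cite{GLT-blocks-1-dim}[Theorem~2.42] rather than verifying it directly, whereas you sketch the entrywise/Kronecker argument yourself.
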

\begin{proof}
The permutation matrix $\Pi_\mu=\Pi_{n,\nu,\mu}$ is given in \cite{GLT-blocks-1-dim}[Equation (2.1), p. 38] and for $s=t$ the statement $T_n(F)=\Pi_s A_n \Pi_t^T$ is contained in  \cite{GLT-blocks-1-dim}[Theorem 2.42, p. 56]: the generalization to $s\neq t$ is plain and so in general $T_n(F)=\Pi_s A_n \Pi_t^T$. Since permutation matrices are unitary, by the first part of Theorem \ref{thm:szego-d=1}, we deduce $\{A_{n}\}_{{n}}\sim_\sigma~{F}$. Furthermore, when $F$ is Hermitian-valued we necessarily have $s=t$. Therefore $T_n(F)=\Pi_s A_n \Pi_s^T$ and $A_n$ are similar and by the second part of Theorem \ref{thm:szego-d=1} we deduce $\{A_{n}\}_{{n}}\sim_\lambda~{F}$.
\end{proof}

\begin{theorem}\label{almost equal-blocks-basic}
Assume that a structure $A_n$ as in (\ref{eq:A_Toeplitz}) is given with $n_j=n(\nu)+o(n)$, $j=1,\ldots,\nu$, $n(\nu)=\frac{n}{\nu}$, and with $F=\left(f_{i,j}\right)_{i,j=1}^\nu$.
Then
\[
\{A_{n}\}_{{n}}\sim_\sigma~{F}
\]
and
\[
\{A_{n}\}_{{n}}\sim_\lambda~{F}
\]
if in addition $F$ is Hermitian-valued.
\end{theorem}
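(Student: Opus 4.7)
The plan is to reduce the assertion to Theorem \ref{equal-blocks-basic} by means of the extradimensional framework of Theorem \ref{th:extradim sv} and Theorem \ref{th:extradim eigs}. First I would set $m:=\min_{j=1,\ldots,\nu}n_j$. Since $n_j=n(\nu)+o(n)=n/\nu+o(n)$, one has $m=n/\nu+o(n)$ and therefore $\nu m=n+o(n)$; hence the ratio of $\nu m$ to $n$ tends to $1$, which is exactly the asymptotic condition the extradimensional theorems require.

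Next I would compress $A_n$ down to a matrix whose blocks all have common size. For $p\in\{s,t\}$ and $i=1,\ldots,\nu$, let $R_{n_i,m}^{(p)}\in\mathbb{C}^{pn_i\times pm}$ denote the tall matrix consisting of $I_{pm}$ on top of a zero block. Define $P_{\mathrm{row}}$ as the block-diagonal matrix with diagonal blocks $R_{n_i,m}^{(s)}$, $i=1,\ldots,\nu$ (total size $sn\times s\nu m$), and $P_{\mathrm{col}}$ analogously with $p=t$ (total size $tn\times t\nu m$). Both satisfy $P_{\mathrm{row}}^*P_{\mathrm{row}}=I$ and $P_{\mathrm{col}}^*P_{\mathrm{col}}=I$, and a direct computation shows that the $(i,j)$ block of $\tilde A_{\nu m}:=P_{\mathrm{row}}^*A_nP_{\mathrm{col}}$ is the leading $sm\times tm$ submatrix of $T_{n_i,n_j}(f_{i,j})$, which by Definition \ref{def:toeplitz} and Remark \ref{rmrk:rectangular_toeplitz} equals $T_m(f_{i,j})$. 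Therefore $\tilde A_{\nu m}$ has exactly the form treated by Theorem \ref{equal-blocks-basic} with common block size $m$, yielding $\{\tilde A_{\nu m}\}_m\sim_\sigma F$, and also $\{\tilde A_{\nu m}\}_m\sim_\lambda F$ when $F$ is Hermitian-valued.

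To conclude, the compressed row and column dimensions $s\nu m$ and $t\nu m$ satisfy $s\nu m/(sn)\to 1$ and $t\nu m/(tn)\to 1$, so Theorem \ref{th:extradim sv} applied with $X_n=A_n$ and $Y_{\nu m}=\tilde A_{\nu m}$ delivers $\{A_n\}_n\sim_\sigma F$. In the Hermitian-valued case necessarily $s=t$, and $A_n$ is itself Hermitian (from $f_{j,i}=f_{i,j}^*$ one checks $(T_{n_i,n_j}(f_{i,j}))^*=T_{n_j,n_i}(f_{j,i})$, which forces $A_n^*=A_n$); choosing $P_{\mathrm{row}}=P_{\mathrm{col}}$ makes $\tilde A_{\nu m}$ a Hermitian principal compression of $A_n$, and Theorem \ref{th:extradim eigs} gives $\{A_n\}_n\sim_\lambda F$. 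The only step that is not routine is the identification of the leading $sm\times tm$ submatrix of $T_{n_i,n_j}(f_{i,j})$ with $T_m(f_{i,j})$; this relies crucially on the Toeplitz structure of the blocks and is the place where the hypothesis on the blocks of $A_n$ is genuinely invoked, while everything else amounts to bookkeeping on the $o(n)$ discrepancy between $n_j$ and $n/\nu$.
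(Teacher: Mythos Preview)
Your proposal is correct and follows essentially the same approach as the paper, which simply states that the proof ``amounts in combining Theorem \ref{equal-blocks-basic} and Theorems \ref{th:extradim eigs}--\ref{th:extradim sv}.'' You have supplied the explicit compression matrices and the verification that the leading principal sub-blocks are again Toeplitz with the correct generating functions, which the paper leaves to the reader; the overall strategy is identical.
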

\begin{proof}
The proof amounts in combining Theorem \ref{equal-blocks-basic} and Theorems \ref{th:extradim eigs}-\ref{th:extradim sv}.
\end{proof}
\ \\
\noindent

Now we are ready for an illustrative example, which clarify how to use the theoretical results in the case of $\nu=2$ i.e.
\begin{equation}\label{eq:nu=2}
A_n=
\begin{bmatrix}
T_{n_1}(f_{1,1}) &  T_{n_1,n_2}(f_{1,2})\\
T_{n_2,n_1}(f_{2,1}) &  T_{n_2}(f_{2,2})
\end{bmatrix}
\end{equation}
with $s=t=1$.
The case reported in Theorem \ref{equal-blocks-basic} with $\nu=2$ and $s=t=1$ comes from the recovery of missing data (see \cite{missing-data2}[Section 4.2]) and already Theorem \ref{almost equal-blocks-basic} allows to extend to validity of the analysis in \cite{missing-data2}. Further extensions can be obtained.
We assume that $n_1$, $n_2$ satisfy the assumptions at the beginning of Section \ref{sec:general_block}. Moreover, we require that $c_1\in \mathbb Q^+$ (so that necessarily $c_2=1-c_1\in  \mathbb Q^+$). Hence
\[
c_1=\frac{\alpha_1}{\beta_1}, \ \ \ c_2=\frac{\alpha_2}{\beta_2}, \ \ \ \alpha_1,\alpha_2,\beta_1, \beta_2 \in \mathbb N^+
\]
so that
\begin{equation}\label{reduction to mcm}
c_1=\frac{m_1}{m}, \ \ \ c_2=\frac{m_2}{m}, \ \ \ \ m={\rm lcm}(\beta_1,\beta_2).
\end{equation}
Starting from the structure in (\ref{eq:nu=2}) and considering the terms in (\ref{reduction to mcm}), we define the matrix-valued function $F$ which depends on $f_{j,k}$, $j,k=1,2$, and on $m_1,m_2,m$ as follows:
$E_{1,1}=f_{1,1}I_{m_1}$, $E_{2,2}=f_{2,2}I_{m_2}$ and for $j\neq k$, $j,k=1,2$, we have
\[
E_{j,k} =
\begin{cases}
\begin{array}{l l l l}
\left[
\begin{array}{c|c}
f_{j,k} I_{m_j} & O_{m_j\times (m_k-m_j)} \\
\end{array}
\right] &  &\text{if } j\neq k, & \text{and } m_k\ge m_j, \\
 & & \\
\left[\begin{array}{c}
f_{j,k}I_{m_k} \\
\hline
O_{(m_j-m_k)\times m_k}
\end{array} \right]& & \text{if } j\neq k, & \text{and } m_j\ge m_k. \\
\end{array}
\end{cases}
\]
The idea is that the singular value symbol is determined only by $T_{n_j}(f_{j})$ and by the largest principal submatrix of $T_{n_j,n_k}(f_{j,k})$ i.e.
$T_{\min\{n_j,n_k\}}(f_{j,k})$, $j\neq k$, $j,k=1,2$, the remaining part being a flip of a Hankel matrix, whose associated matrix-sequence is zero-distributed according to Theorem \ref{thm:fasino-tilli-d=1}.
Under the above assumptions and the above notation we conclude that
\begin{equation}\label{distr-sv:nu=2-mcm}
\{A_{n}\}_{{n}}\sim_\sigma~{F}=
\begin{bmatrix}
E_{1,1} &  E_{1,2}\\
E_{2,1} &  E_{2,2}
\end{bmatrix}
\end{equation}
and
\begin{equation}\label{distr-eig:nu=2-mcm}
\{A_{n}\}_{{n}}\sim_\lambda~{F}=
\begin{bmatrix}
E_{1,1} &  E_{1,2}\\
E_{2,1} &  E_{2,2}
\end{bmatrix}
\end{equation}
if in addition $A_n$ is Hermitian for any $n_1,n_2$.

In fact we are ready for the generalization of the previous qualitative steps to the case of $\nu\ge 2$, accompanied by its precise derivations.

\begin{lemma}\label{lemma1:from fasino-tilli-d=1}
Let ${f}:[-\pi,\pi]\to\mathbb{C}^{s \times t}$, { ${f}\in L^1([-\pi,\pi],s\times t)$}, and let us consider $T_{n,n'}(f)$ with
\begin{equation}\label{nondegenerate}
0< \liminf_{n,n'\rightarrow \infty}\frac{n}{n+n'} \le \limsup_{n,n'\rightarrow \infty}\frac{n}{n+n'} <1,
	\end{equation}
that is $\max\{n,n'\}-\min\{n,n'\}\sim n+n'$ (far away from the square case when $s=t$) and $\min\{n,n'\}\sim n+n'$ (far away from degenerate rectangular case).
 If $n>n'$ then
	\begin{equation}
		\label{eq1: from fasino-tilli-d=1}
T_{n,n'}(f)=\left[\begin{array}{c}
T_{n'}(f) \\
\hline
X_{n,n'}
\end{array} \right], \ \ \
		\{X_{n,n'}\}_{{n}}\sim_\sigma~0.
	\end{equation}
 If $n'>n$ then
	\begin{equation}
	\label{eq1: from fasino-tilli-d=1-bis}
	T_{n,n'}(f)=\left[
\begin{array}{c|c}
T_{n}(f) & Y_{n,n'} \\
\end{array}
\right], \ \ \
		\{Y_{n,n'}\}_{{n}}\sim_\sigma~0.
	\end{equation}
\end{lemma}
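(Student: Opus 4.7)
My plan is to prove (\ref{eq1: from fasino-tilli-d=1}) in full; the companion statement (\ref{eq1: from fasino-tilli-d=1-bis}) then follows by applying it to $f^{*}\in L^{1}([-\pi,\pi],t\times s)$ and using the identity $T_{n,n'}(f)^{*}=T_{n',n}(f^{*})$, since conjugate transposition preserves singular values. From Definition~\ref{def:toeplitz}, the first $n'$ block-rows of $T_{n,n'}(f)$ are exactly $T_{n'}(f)$, so the advertised block decomposition is immediate, and the residual $X_{n,n'}$, of size $s(n-n')\times tn'$, has block $(i',j)$ entry $\hat f_{i'+n'-j}$ for $i'=1,\dots,n-n'$ and $j=1,\dots,n'$. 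The key observation is that right-multiplication by the block column-reversal $J_{n'}\otimes I_{t}$ (a unitary, hence singular-value-preserving, matrix) turns this entry into $\hat f_{i'+j-1}$, so that $X_{n,n'}(J_{n'}\otimes I_{t})$ is a rectangular block Hankel matrix.

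Setting $N:=\max\{n-n',\,n'\}$, the $(i,j)$ block entry of the square block Hankel $H_{N}(f)$ of Theorem~\ref{thm:fasino-tilli-d=1} is also $\hat f_{i+j-1}$, so $X_{n,n'}(J_{n'}\otimes I_{t})$ coincides with the upper-left submatrix of $H_{N}(f)$ obtained by keeping its first $s(n-n')$ rows and $tn'$ columns. The min--max characterisation of singular values, applied to submatrices, then gives
\[
\sigma_{j}(X_{n,n'})=\sigma_{j}\bigl(X_{n,n'}(J_{n'}\otimes I_{t})\bigr)\le \sigma_{j}(H_{N}(f)),\qquad j=1,\dots,\min\{s(n-n'),\,tn'\}.
\]

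To conclude, Theorem~\ref{thm:fasino-tilli-d=1} gives $\{H_{N}(f)\}_{N}\sim_{\sigma}0$, which is equivalent to: for every $\epsilon>0$, the number of singular values of $H_{N}(f)$ exceeding $\epsilon$ is $o(N)$. The nondegeneracy hypothesis (\ref{nondegenerate}), interpreted in the strong form advertised by the comment following it, ensures that $\min\{n,n'\}$, $|n-n'|$, $N$, and the total count $\min\{s(n-n'),\,tn'\}$ are all of order $n+n'$. Combined with the preceding inequality, this translates into clustering of the singular values of $X_{n,n'}$ at $0$ of the same $o$-order. A routine $\epsilon$-splitting of $\frac{1}{\min\{s(n-n'),\,tn'\}}\sum_{j}F(\sigma_{j}(X_{n,n'}))$ --- the portion with $\sigma_{j}\le\epsilon$ tends to $F(0)$ by uniform continuity of $F\in\mathcal C_{0}(\mathbb R_{0}^{+})$ as $\epsilon\downarrow 0$, while the portion with $\sigma_{j}>\epsilon$ has negligible cardinality and $F$-contribution bounded by $\|F\|_{\infty}$ --- yields $\{X_{n,n'}\}_{n}\sim_{\sigma}0$, as required.

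The heart of the argument is the structural recognition in the first paragraph: after a harmless column reversal, the ``extra'' block of a rectangular Toeplitz matrix is a rectangular Hankel lying inside a square Hankel of known zero singular value distribution. Once this identification is made, the rest --- submatrix singular value interlacing together with the standard clustering-to-distribution argument --- is essentially routine. The only subtle point to keep in mind is that the nondegeneracy hypothesis must be used in the strong form (so that both $\min\{n,n'\}$ and $|n-n'|$ are of order $n+n'$); otherwise the $o(N)$ clustering inherited from $H_{N}(f)$ would fail to be a negligible fraction of the total number of singular values of $X_{n,n'}$, and the $\sim_\sigma 0$ conclusion would not follow from the submatrix bound alone.
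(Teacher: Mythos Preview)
Your proof is correct and shares with the paper its core structural insight: after a block column reversal, the residual $X_{n,n'}$ is a rectangular submatrix of the square Hankel $H_{N}(f)$, $N=\max\{n-n',n'\}$, which is zero-distributed by Theorem~\ref{thm:fasino-tilli-d=1}. The two arguments diverge only in how they pass from ``submatrix of a zero-distributed Hankel'' to $\{X_{n,n'}\}_n\sim_\sigma 0$. The paper invokes the extradimensional compression result Theorem~\ref{th:extradim sv-f const} with $\alpha=0$, which packages exactly this step; you instead use the singular-value interlacing inequality $\sigma_j(X_{n,n'})\le\sigma_j(H_N(f))$ followed by a direct $\epsilon$-splitting. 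Your route is more self-contained (it does not rely on Theorem~\ref{th:extradim sv-f const}), while the paper's is more modular and highlights the general mechanism behind the step. For the second case $n'>n$, the paper argues symmetrically via $f^R(\theta)=f(-\theta)$ and a left block flip, whereas you reduce to the first case through the identity $T_{n,n'}(f)^*=T_{n',n}(f^*)$; both reductions are equally valid. Your explicit flagging that the ``strong form'' of~(\ref{nondegenerate}) (namely $|n-n'|\sim n+n'$) is what is actually used is accurate, and the paper's proof requires it just as much, since the hypotheses of Theorem~\ref{th:extradim sv-f const} need both $n-n'$ and $n'$ to be of comparable order to $\tilde n$.
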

\begin{proof}
Let us look closely to the matrix $X_{n,n'}$ as in the first expression in (\ref{eq1: from fasino-tilli-d=1}). From direct inspection we that the Fourier coefficient $\hat{{f}}_{1}$ is present only one time in the matrix (in the north-east corner of $X_{n,n'}$), the Fourier coefficient $\hat{{f}}_{2}$ is present only twice in the matrix (in the north-east corner of $X_{n,n'}$), and so on. Hence if we make a block flip from the right, then we obtain that $X_{n,n'} (W_{n'}\otimes I_t)$ is submatrix of size $s(n-n')\times tn'$ of $H_{\tilde n}(f)$ with $\tilde n=\max\{n-n',n'\}$ and with $W_\mu$ being the anti-identity (or flip) matrix-size of size $\mu$. Since $\{H_{\tilde n}\}_{{n}}\sim_\sigma~0$ by Theorem \ref{thm:fasino-tilli-d=1}, taking into account that $W_{n'}\otimes I_t$ is unitary so that $X_{n,n'} (W_{n'}\otimes I_t)$ and $X_{n,n'}$ share the same singular values, the application of Theorem \ref{th:extradim sv-f const} with $\alpha=0$ granted by the assumptions in (\ref{nondegenerate}) implies that
\[
\{X_{n,n'}\}_{{n}}\sim_\sigma~0.
\]
If $n'>n$ and we look closely to the matrix $Y_{n,n'}$ as in the first expression in (\ref{eq1: from fasino-tilli-d=1-bis}), then there is technical difficulty. Indeed the Fourier coefficient $\hat{{f}}_{-1}$ is present only one time in the matrix (in the south-west corner of $Y_{n,n'}$), the Fourier coefficient $\hat{{f}}_{-2}$ is present only twice in the matrix (in the south-west corner of $Y_{n,n'}$), and so on. However the negative Fourier coefficients of $f(\cdot)$ are the positive Fourier coefficients of $f(-\cdot)$ that is $\hat{{f^R}}_{k}=\hat{{f}}_{-k}$ for every $k\in \mathbb Z$, with $f^R(\theta)=f(-\theta)$ and with $f^R \in L^1([-\pi,\pi],s\times t)$ owing to $f\in L^1([-\pi,\pi],s\times t)$.
Now the rest of the proof mimics that concerning the previous case: by making block flipping from the left, we obtain that $(W_n\otimes I_s) Y_{n,n'}$ is of Hankel nature and is a submatrix of size $sn \times t(n'-n)$ of $H_{\tilde n}(f)$ with $\tilde n=\max\{n'-n,n\}$. Since $\{H_{\tilde n}\}_{{n}}\sim_\sigma~0$ by Theorem \ref{thm:fasino-tilli-d=1}, taking into account that $W_{n}\otimes I_s$ is unitary so that $(W_n\otimes I_s) Y_{n,n'}$ and $Y_{n,n'}$ share the same singular values, the application of Theorem \ref{th:extradim sv-f const} with $\alpha=0$ granted by the assumptions in (\ref{nondegenerate}) implies that
\[
\{Y_{n,n'}\}_{{n}}\sim_\sigma~0.
\]
\end{proof}
\ \\
\noindent

The previous lemma is quite simple in itself, but it has surprisingly strong consequences. In order to describe such consequences let us introduce the following auxiliary matrices $\tilde A_n$, $\hat A_n$, in which with respect to $A_n$ in (\ref{eq:A_Toeplitz}) terms leading to zero-distributed matrix-sequences are simply neglected.
More precisely we set
\begin{small}
\begin{equation}
\label{eq:tilde A_Toeplitz}
\tilde A_n=\left[
\begin{tikzpicture}[baseline=(m.center)]
\matrix (m) [matrix of math nodes,column sep=0.15em,row sep=0.15em] {
|[draw,dashed, minimum width=2cm, minimum height=2cm]| T_{n_1}(f_{1,1}) & |[draw,dashed, minimum width=2.4cm, minimum height=2cm]| \tilde T_{n_1,n_2} & \cdots & |[draw,dashed, minimum width=1.6cm, minimum height=2cm]| \tilde T_{n_1,n_{\nu}} \\
|[draw,dashed, minimum width=2cm, minimum height=2.4cm]| \tilde T_{n_2,n_1} & |[draw,dashed, minimum width=2.4cm, minimum height=2.4cm]| T_{n_2}(f_{2,2}) & \cdots & |[draw,dashed, minimum width=1.6cm, minimum height=2.4cm]| \tilde T_{n_2,n_{\nu}} \\
\vdots & \vdots & \ddots & \vdots \\
|[draw,dashed, minimum width=2cm, minimum height=1.6cm]| \tilde T_{n_{\nu},n_1} & |[draw,dashed,minimum width=2.4cm, minimum height=1.6cm]| \tilde T_{n_{\nu},n_2} & \cdots & |[draw,dashed, minimum width=1.6cm, minimum height=1.6cm]| T_{n_{\nu}}(f_{\nu,\nu}) \\
};
\end{tikzpicture}
\right],
\end{equation}
\end{small}
with
\[
\tilde T_{n,n'}=\left[\begin{array}{c}
T_{n'}(f) \\
\hline
O_{s(n-n')\times tn'}
\end{array} \right],
\]
if $n\ge n'$ and
\[
\tilde T_{n,n'}=\left[
\begin{array}{c|c}
T_{n}(f) & O_{sn\times t(n'-n)} \\
\end{array}
\right],
\]
if $n'>n$.

Now we define
\begin{small}
\begin{equation}
\label{eq:hat A_Toeplitz}
\hat A_n=\left[
\begin{tikzpicture}[baseline=(m.center)]
\matrix (m) [matrix of math nodes,column sep=0.15em,row sep=0.15em] {
|[draw,dashed, minimum width=2cm, minimum height=2cm]| \hat T_{n_1} & |[draw,dashed, minimum width=2.4cm, minimum height=2cm]| \hat T_{n_1,n_2} & \cdots & |[draw,dashed, minimum width=1.6cm, minimum height=2cm]| \hat T_{n_1,n_{\nu}} \\
|[draw,dashed, minimum width=2cm, minimum height=2.4cm]| \hat T_{n_2,n_1} & |[draw,dashed, minimum width=2.4cm, minimum height=2.4cm]| \hat T_{n_2} & \cdots & |[draw,dashed, minimum width=1.6cm, minimum height=2.4cm]| \hat T_{n_2,n_{\nu}} \\
\vdots & \vdots & \ddots & \vdots \\
|[draw,dashed, minimum width=2cm, minimum height=1.6cm]| \hat T_{n_{\nu},n_1} & |[draw,dashed,minimum width=2.4cm, minimum height=1.6cm]| \hat T_{n_{\nu},n_2} & \cdots & |[draw,dashed, minimum width=1.6cm, minimum height=1.6cm]| \hat T_{n_{\nu}} \\
};
\end{tikzpicture}
\right],
\end{equation}
\end{small}
where we assume that the sizes $n_1,\ldots,n_\nu$ can be written as $n_j=m n_j'+o(n_j)=m_j n(m)+o(n)$, $m$ fixed positive integer, and $n(m)=\frac{n}{m}$.
Under such a setting, by considering the block $(j,k)$ in (\ref{eq:tilde A_Toeplitz}) with $n_j\le n_k$, the matrix  $T_{n_{j}}(f)$ in the considered block is replaced essentially by  $m_j$ copies of $T_{n(m)}(f)$ i.e. $\left[I_{m_j-1}\otimes T_{n(m)}(f)\right]\oplus X_{n(m),j,k}$ and $X_{n(m),j,k}$ is a matrix of size $n(m)+o(n(m))$, obtained by neglecting, if the sign of the $o(n(m))$ term is negative, or adding, if the sign of the $o(n(m))$ term is positive, $o(n(m))$ last rows and columns in $T_{n(m)}(f)$. Similarly, by considering the block $(j,k)$ in (\ref{eq:tilde A_Toeplitz}) with $n_k\le n_j$, the matrix  $T_{n_{k}}(f)$ in the considered block is replaced essentially by  $m_k$ copies of $T_{n(m)}(f)$ i.e. $\left[I_{m_k-1}\otimes T_{n(m)}(f)\right]\oplus X_{n(m),j,k}$ and $X_{n(m),j,k}$ is a matrix of size $n(m)+o(n(m))$, obtained by neglecting, if the sign of the $o(n(m))$ term is negative, or adding, if the sign of the $o(n(m))$ term is positive, $o(n(m))$ last rows and columns in $T_{n(m)}(f)$.

\begin{proposition}\label{prop:equivalence 1}
Assume that a structure $A_n$ as in (\ref{eq:A_Toeplitz}) is given with $n_j$, $j=1,\ldots,\nu$, satisfying conditions 1. and 2. at the beginning of Section \ref{sec:general_block}. Let $\tilde A_n$ defined as in (\ref{eq:tilde A_Toeplitz}). Then
	\begin{equation}
		\label{eq:equiv1}
		\{A_{n}\}_{{n}}\sim_\sigma G \ \ {\rm iff}\ \ \{\tilde A_{n}\}_{{n}}\sim_\sigma G.
	\end{equation}
If in addition $A_n$ and $\tilde A_n$ are Hermitian then
	\begin{equation}
		\label{eq:equiv2}
		\{A_{n}\}_{{n}}\sim_\lambda G \ \ {\rm iff}\ \ \{\tilde A_{n}\}_{{n}}\sim_\lambda G.
	\end{equation}
\end{proposition}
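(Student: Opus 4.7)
The plan is to show that the block-wise difference $E_n := A_n - \tilde A_n$ is singular-value zero-distributed, and then invoke the standard perturbation principle to transfer the symbol $G$ between the two sequences in both directions. Comparing (\ref{eq:A_Toeplitz}) and (\ref{eq:tilde A_Toeplitz}) block by block, the diagonal blocks cancel, while the off-diagonal block of $E_n$ in position $(i,j)$ equals $X_{n_i,n_j}$ (if $n_i\ge n_j$) or $Y_{n_i,n_j}$ (if $n_j>n_i$), padded with a zero block on top, respectively on the left, to restore the full size $sn_i\times tn_j$. Such zero-padding does not alter the non-zero singular values. Because of assumption 2 in Section \ref{sec:general_block}, for every $i\neq j$ the pair $(n_i,n_j)$ satisfies the non-degeneracy (\ref{nondegenerate}), since $n_i/(n_i+n_j)\to c_i/(c_i+c_j)\in (0,1)$. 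Lemma \ref{lemma1:from fasino-tilli-d=1} then yields that each off-diagonal block of $E_n$ is zero-distributed in the singular value sense.

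The key step is to pass from block-wise zero-distribution to $\{E_n\}_n\sim_\sigma 0$. I would rely on the standard characterization (see e.g.\ \cite{GLT-blocks-1-dim,GSI}) that $\{B_n\}_n\sim_\sigma 0$ if and only if, for every $\epsilon>0$, one can write $B_n=N_n^{(\epsilon)}+R_n^{(\epsilon)}$ with $\|N_n^{(\epsilon)}\|\le \epsilon$ and $\mathrm{rank}\,R_n^{(\epsilon)}\le \epsilon n$ for all sufficiently large $n$. Applying this to each off-diagonal block of $E_n$ with tolerance $\epsilon/\nu^2$ and assembling via the canonical block embeddings $M_{ij}\mapsto \mathcal{E}_i M_{ij}\mathcal{E}_j^*$, subadditivity of rank and the triangle inequality for the operator norm produce a global splitting with $\|N_n\|\le\epsilon$ and $\mathrm{rank}\,R_n\le\epsilon n$. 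Hence $\{E_n\}_n\sim_\sigma 0$.

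The conclusion then follows from the Weyl/Ky Fan perturbation principle: a zero-distributed singular-value perturbation does not alter the symbol, so $\{A_n\}_n=\{\tilde A_n+E_n\}_n\sim_\sigma G$ if and only if $\{\tilde A_n\}_n\sim_\sigma G$, proving (\ref{eq:equiv1}). For (\ref{eq:equiv2}), Hermitianness of $A_n$ and $\tilde A_n$ forces $E_n$ to be Hermitian; taking Hermitian parts in the above splitting does not worsen the norm or rank bounds, so $\{E_n\}_n\sim_\lambda 0$ and the Hermitian perturbation principle (Weyl inequalities for eigenvalues together with a Ky Fan-type correction on an $o(n)$ set of indices) closes the argument. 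The main obstacle is the block-assembly step passing from block-wise to global zero-distribution; everything else is a direct invocation of Lemma \ref{lemma1:from fasino-tilli-d=1} and of standard spectral perturbation machinery from the GLT literature.
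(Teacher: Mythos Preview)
Your proposal is correct and follows essentially the same route as the paper: both reduce to showing $\{A_n-\tilde A_n\}_n\sim_\sigma 0$ via Lemma~\ref{lemma1:from fasino-tilli-d=1} applied to each of the $\nu^2-\nu$ off-diagonal blocks (after checking that conditions 1.\ and 2.\ guarantee (\ref{nondegenerate})), and then invoke the standard perturbation principle from \cite{GLT-blocks-1-dim}. The only cosmetic difference is in the assembly step: the paper writes $A_n-\tilde A_n$ directly as a sum of $\nu^2-\nu$ zero-distributed matrix-sequences (each supported on a single block position) and uses that a fixed finite sum of zero-distributed sequences is zero-distributed, whereas you spell out the same fact via the rank-plus-norm splitting with tolerance $\epsilon/\nu^2$; both arguments are equivalent and standard.
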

\begin{proof}
In \cite{GLT-blocks-1-dim}[Remark 2.35] it is stated that $\{X_{n}-Y_n\}_{{n}}\sim_\sigma 0$ implies
\[
\{X_{n}\}_{{n}}\sim_\sigma G \ \ {\rm iff}\ \ \{Y_{n}\}_{{n}}\sim_\sigma G
\]
with $\{X_{n}\}_{{n}}\sim_\lambda G$ iff $\{Y_{n}\}_{{n}}\sim_\lambda G$ if in addition the matrices $X_n,Y_n$ are Hermitian for every $n$. Hence the proof amounts in proving that $\{A_{n}-\tilde A_n\}_{{n}}\sim_\sigma 0$. Now looking at the construction in (\ref{eq:tilde A_Toeplitz}) and taking into account Lemma \ref{lemma1:from fasino-tilli-d=1}, we obtain that the matrix-sequence $\{A_{n}-\tilde A_n\}_{{n}}$ can be written as the sum of $\nu^2-\nu$ zero-distributed matrix-sequences in which for any zero-distributed summand the only nonzero submatrix is of the form either (\ref{eq1: from fasino-tilli-d=1}) or (\ref{eq1: from fasino-tilli-d=1-bis}). We stress that the assumption (\ref{nondegenerate}) in Lemma \ref{lemma1:from fasino-tilli-d=1} is satisfied thanks to conditions 1. and 2. reported at the beginning of Section \ref{sec:general_block}.

Since $\nu$ is a constant independent of $n$ by condition 1., we infer that also $\{A_{n}-\tilde A_n\}_{{n}}$ is zero-distributed and this concludes the proof.
\end{proof}
\ \\
\noindent

\begin{proposition}\label{prop:equivalence 2}
Assume that a structure $A_n$ as in (\ref{eq:A_Toeplitz}) is given with $n_j$, $j=1,\ldots,\nu$, satisfying conditions 1. and 2. at the beginning of Section \ref{sec:general_block}. Let $\hat A_n$ defined as in (\ref{eq:hat A_Toeplitz}). Then
	\begin{equation}
		\label{eq:equiv1-bis}
		\{A_{n}\}_{{n}}\sim_\sigma G \ \ {\rm iff}\ \ \{\hat A_{n}\}_{{n}}\sim_\sigma G.
	\end{equation}
If in addition $A_n$ and $\tilde A_n$ are Hermitian then
	\begin{equation}
		\label{eq:equiv2-bis}
		\{A_{n}\}_{{n}}\sim_\lambda G \ \ {\rm iff}\ \ \{\hat A_{n}\}_{{n}}\sim_\lambda G.
	\end{equation}
\end{proposition}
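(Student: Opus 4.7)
The plan is to mirror the blueprint used for Proposition \ref{prop:equivalence 1}: I would establish that $\{A_n - \hat A_n\}_n \sim_\sigma 0$ and then invoke the stability statement in \cite{GLT-blocks-1-dim}[Remark 2.35] to transfer the distribution. As a bridge, I would insert $\tilde A_n$ of (\ref{eq:tilde A_Toeplitz}) and exploit Proposition \ref{prop:equivalence 1}, which already gives $\{A_n - \tilde A_n\}_n \sim_\sigma 0$. The problem is then reduced to showing $\{\tilde A_n - \hat A_n\}_n \sim_\sigma 0$, which I would attack block-by-block across the $\nu^2$ pairs $(j,k)$.

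Fixing one such pair with, say, $n_j \le n_k$, the $(j,k)$ block of $\tilde A_n - \hat A_n$ is nonzero only in a leading $sn_j \times tn_j$ window. In $\tilde A_n$ that window carries $T_{n_j}(f_{j,k})$, while in $\hat A_n$ it carries $[I_{m_j - 1} \otimes T_{n(m)}(f_{j,k})] \oplus X_{n(m), j, k}$. Writing $n_j = m_j n(m) + r$ with $r = o(n)$, the $r$ extra rows and columns together with the replacement of the last diagonal block $T_{n(m)}(f_{j,k})$ by $X_{n(m), j, k}$ contribute only an $o(n)$-rank perturbation, which is trivially singular-value zero-distributed. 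What remains to analyze is the matrix
\[
D_{j,k} := T_{m_j n(m)}(f_{j,k}) - I_{m_j} \otimes T_{n(m)}(f_{j,k}),
\]
whose block-diagonal entries (in the $m_j \times m_j$ partition into blocks of size $sn(m) \times tn(m)$) vanish and whose $(p,q)$ off-block is a Toeplitz matrix built from Fourier coefficients of $f_{j,k}$ with indices shifted by $(p-q) n(m)$.

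Following the template of Lemma \ref{lemma1:from fasino-tilli-d=1}, right-multiplication by the block-flip $W_{n(m)} \otimes I_t$ for $p > q$ (resp.\ left-multiplication by $W_{n(m)} \otimes I_s$ for $p < q$) identifies this off-block, up to unitary equivalence, with a submatrix of the Hankel matrix $H_N(f_{j,k})$ (resp.\ $H_N(f_{j,k}^R)$ with $f_{j,k}^R(\theta) = f_{j,k}(-\theta)$), where $N$ is of order $n(m)$ and both row and column dimensions of the submatrix are bounded below by a positive fraction of $N$ depending only on $m$. By Theorem \ref{thm:fasino-tilli-d=1} both $\{H_N(f_{j,k})\}_N$ and $\{H_N(f_{j,k}^R)\}_N$ are singular-value zero-distributed, and Theorem \ref{th:extradim sv-f const} with $\alpha = 0$ propagates the zero distribution to each submatrix. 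Since the number of off-blocks is $m_j^2 - m_j$, bounded independently of $n$, $D_{j,k}$ is the sum of a bounded number of zero-distributed matrix-sequences and is therefore itself zero-distributed.

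Summing over the $\nu^2$ pairs $(j,k)$ yields $\{\tilde A_n - \hat A_n\}_n \sim_\sigma 0$, and combining with Proposition \ref{prop:equivalence 1} gives $\{A_n - \hat A_n\}_n \sim_\sigma 0$; equation (\ref{eq:equiv1-bis}) then follows from Remark 2.35 of \cite{GLT-blocks-1-dim}. The eigenvalue version (\ref{eq:equiv2-bis}) is obtained verbatim under the Hermitian hypothesis on $A_n$ and $\hat A_n$, using the eigenvalue part of the same remark. I expect the main obstacle to be the careful bookkeeping in the Hankel identification step: correctly handling the upper- and lower-triangular cases via the reflection $f \mapsto f^R$ and verifying that the extradimensional dimension ratios stay bounded below by constants depending only on the fixed parameters $m$ and $\nu$, so that the Hankel estimate of Theorem \ref{thm:fasino-tilli-d=1} transfers cleanly to every off-block through Theorem \ref{th:extradim sv-f const}.
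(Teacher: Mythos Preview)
Your proposal is correct and follows essentially the same route as the paper: bridge through $\tilde A_n$ via Proposition \ref{prop:equivalence 1}, then decompose each block of $\tilde A_n-\hat A_n$ into $m_*^2-m_*$ off-diagonal sub-blocks that, after a block flip, become submatrices of $H_N(f_{j,k})$ or $H_N(f_{j,k}^R)$, apply Theorem \ref{thm:fasino-tilli-d=1} together with Theorem \ref{th:extradim sv-f const} at $\alpha=0$, and sum the bounded number of zero-distributed pieces. The only cosmetic discrepancy is that the paper embeds the sub-blocks in the single Hankel matrix $H_{n_*}(f_{j,k})$ of full block-size rather than in Hankel matrices of order $n(m)$, and the Hermitian hypothesis in (\ref{eq:equiv2-bis}) is stated for $A_n$ and $\tilde A_n$ rather than $\hat A_n$; neither affects the argument.
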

\begin{proof}

As in the proof of Proposition \ref{prop:equivalence 1} the key fact is stated in \cite{GLT-blocks-1-dim}[Remark 2.35]. Furthermore in the proof of Proposition \ref{prop:equivalence 1} we have shown that $\{A_{n}-\tilde A_n\}_{{n}}$ can be written as the sum of $\nu^2-\nu$ zero-distributed matrix-sequences. Since $\{A_{n}-\hat A_n\}_{{n}}=\{A_{n}-\tilde A_n\}_{{n}}+\{\tilde A_{n}-\hat A_n\}_{{n}}$ and $\nu$ is fixed with respect to $n$ by assumption 1., it is now sufficient to show that also $\{\tilde A_{n}-\hat A_n\}_{{n}}$ can be written as the sum of a constant number of zero-distributed matrix-sequences. In order to do that we carefully look at the construction of $\hat A_n$ in (\ref{eq:hat A_Toeplitz}). Since the only change with respect to $\tilde A_n$ is made on the diagonal blocks, we deduce that the structure of $\tilde A_{n}-\hat A_n$ is the same as in (\ref{eq:hat A_Toeplitz}), but in each block only the nondiagonal part survives, since in $\hat A_n$ we copy the diagonal part of each block of $\tilde A_{n}$.

Taking $n_{*}$ equal to either $n_{j}$ or $n_{k}$, according to the mathematical description after equation (\ref{eq:hat A_Toeplitz}), a careful direct check shows that each block
\[
T_{n_{*}}(f_{j,k})-\left(\left[I_{m_*-1} \otimes T_{n(m)}(f_{j,k})\right]\oplus X_{n(m),j,k}\right)
\]
can be expressed as the sum of $m_*^2-m_*$ blocks, which are flippings of submatrices either of $H_{n_{*}}(f_{j,k})$ or $H_{n_{*}}(f_{j,k}^R)$ with $f^R_{j,k}(\theta)=f_{j,k}(-\theta)$, including terms of the form (\ref{eq1: from fasino-tilli-d=1}) and (\ref{eq1: from fasino-tilli-d=1-bis}) in the main block subdiagonal and in the main block upperdiagonal, respectively. Since $\{H_{n_{*}}(f_{j,k})\}_{{n_j}}\sim_\sigma 0$ and $\{H_{n_{*}}(f_{j,k}^R)\}_{{n_j}}\sim_\sigma 0$ by Theorem \ref{thm:fasino-tilli-d=1}, taking into account that $\nu,m_1,\ldots,m_\nu$ and $m$ are fixed with respect to $n$, by following a reasoning as in Lemma \ref{lemma1:from fasino-tilli-d=1} where the block flippings are explicitly defined, we finally deduce that  $\{\tilde A_{n}-\hat A_n\}_{{n}}$ can be expressed as the sum of $g(\nu,m_1,\ldots,m_\nu,m)$ zero-distributed matrix-sequences, with $g(\nu,m_1,\ldots,m_\nu,m)$ independent of $n$. 

With the latter the desired result is proven.
\end{proof}
\ \\
\noindent

\begin{theorem}\label{th:general}
Assume that a structure $A_n$ as in (\ref{eq:A_Toeplitz}) is given with $n_j$, $j=1,\ldots,\nu$, satisfying conditions 1. and 2. at the beginning of Section \ref{sec:general_block}.
Assume that $c_1,\ldots,c_\nu \in \mathbb Q^+$, $c_j=\frac{\alpha_j}{\beta_j}$, $\alpha_j,\beta_j \in \mathbb N^+$, $j=1,\ldots,\nu$. Then, setting
$F=\left(E_{i,j}\right)_{i,j=1}^\nu$, we have
\[
\{A_{n}\}_{{n}}\sim_\sigma~{F}
\]
and
\[
\{A_{n}\}_{{n}}\sim_\lambda~{F}
\]
if in addition $A_n$ is Hermitian. Here
\[
E_{jk} =
\begin{cases}
\begin{array}{l l l l}
I_{m_j}\otimes f_{j,j} & & \text{if } j=k, \\
 & & \\
\left[
\begin{array}{c|c}
 I_{m_j} \otimes f_{j,k}& O_{sm_j\times t(m_k-m_j)} \\
\end{array}
\right] &  &\text{if } j\neq k, & \text{and } m_k\ge m_j, \\
 & & \\
\left[\begin{array}{c}
I_{m_k} \otimes f_{j,k} \\
\hline
O_{s(m_j-m_k)\times tm_k}
\end{array} \right]& & \text{if } j\neq k, & \text{and } m_j\ge m_k, \\
\end{array}
\end{cases}
\]
with $c_j=\frac{m_j}{m}$,  $j,k=1,\ldots,\nu$, $m={\rm lcm}(\beta_1,\ldots,\beta_\nu)$.
\end{theorem}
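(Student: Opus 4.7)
The plan is to reduce $A_n$ to its companion $\hat A_n$ via Proposition~\ref{prop:equivalence 2} and then apply Theorem~\ref{almost equal-blocks-basic} to $\hat A_n$, viewed as a block matrix of equal-sized Toeplitz sub-blocks at the finer scale $n(m)=n/m$. From the rationality hypothesis one writes $c_j=m_j/m$ with $m={\rm lcm}(\beta_1,\ldots,\beta_\nu)$ and $m_j\in\mathbb N^+$, so that $n_j=m_j n(m)+o(n)$; this is precisely the size regime in which the construction (\ref{eq:hat A_Toeplitz}) operates. By Proposition~\ref{prop:equivalence 2}, it suffices to determine the singular value distribution of $\{\hat A_n\}_n$ (and, in the Hermitian case, the eigenvalue distribution).

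Next, I would partition $\hat A_n$ into an $m\times m$ grid of sub-blocks of size $sn(m)\times tn(m)$, up to $o(n(m))$ corrections absorbed into the $X_{n(m),j,k}$ terms. A direct inspection of (\ref{eq:hat A_Toeplitz}) shows that the sub-block in position $(\ell_1,\ell_2)$ of this grid falls inside some big block $(j,k)$ and equals $T_{n(m)}(h_{\ell_1,\ell_2})$, where $h_{\ell_1,\ell_2}$ is either $f_{j,k}$ or identically zero. On the diagonal big block $(j,j)$, the block-diagonal layout of $m_j$ copies of $T_{n(m)}(f_{j,j})$ matches exactly the sub-block identity pattern of $E_{j,j}=I_{m_j}\otimes f_{j,j}$; on the off-diagonal big block $(j,k)$ with $m_j\le m_k$, the leading $m_j$ sub-block columns host $T_{n(m)}(f_{j,k})$ on the sub-diagonal and zero elsewhere while the trailing $m_k-m_j$ sub-block columns vanish, matching $E_{j,k}=[\,I_{m_j}\otimes f_{j,k}\mid O_{sm_j\times t(m_k-m_j)}\,]$; the case $m_j\ge m_k$ is symmetric. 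Thus the matrix-valued symbol naturally attached to the sub-block-Toeplitz reading of $\hat A_n$ is precisely $F=(E_{i,j})_{i,j=1}^\nu$.

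Applying Theorem~\ref{almost equal-blocks-basic} with $m$ in place of $\nu$ and the symbol $F$ just identified yields $\{\hat A_n\}_n\sim_\sigma F$, which transfers to $\{A_n\}_n\sim_\sigma F$ by Proposition~\ref{prop:equivalence 2}. If $A_n$ is Hermitian then $f_{j,k}^*=f_{k,j}$ for all $j,k$, from which the defining rule of the $E_{i,j}$ gives $E_{j,k}^*=E_{k,j}$ and hence $F=F^*$ (forcing $s=t$); $\hat A_n$ inherits Hermitianness from this construction, so the eigenvalue clause of Theorem~\ref{almost equal-blocks-basic} furnishes $\{\hat A_n\}_n\sim_\lambda F$, which transfers back to $\{A_n\}_n\sim_\lambda F$ via the eigenvalue clause of Proposition~\ref{prop:equivalence 2}.

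The main obstacle is the combinatorial bookkeeping in the second step: one must verify that the permutation implicit in the application of Theorem~\ref{equal-blocks-basic} (underlying Theorem~\ref{almost equal-blocks-basic}) to the sub-block layout of $\hat A_n$ genuinely produces $F$ in the arrangement prescribed by the statement, and that the $o(n(m))$ deviations built into the $X_{n(m),j,k}$ corrections are uniformly controlled so as to meet the hypotheses of Theorem~\ref{almost equal-blocks-basic}. Once these matching issues are settled, the argument is essentially a reindexing followed by the reduction chain $A_n\leadsto\hat A_n\leadsto T_{n(m)}(F)$ outlined above.
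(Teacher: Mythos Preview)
Your proposal is correct and follows essentially the same route as the paper: reduce $A_n$ to $\hat A_n$ via Proposition~\ref{prop:equivalence 2}, recognize $\hat A_n$ as a block-Toeplitz matrix at the finer scale $n(m)$ with symbol $F=(E_{i,j})$, and conclude by the permutation argument of Theorem~\ref{equal-blocks-basic}. The only cosmetic difference is that the paper first treats the exact case $n_j=c_j n$ (reading $\hat A_n$ as the $\nu\times\nu$ array $[T_{n(m)}(E_{j,k})]$ and applying Theorem~\ref{equal-blocks-basic}) and then invokes the extradimensional Theorems~\ref{th:extradim eigs}--\ref{th:extradim sv} for the general $n_j=c_j n+o(n)$, whereas you fold both steps into a single call to Theorem~\ref{almost equal-blocks-basic} at the $m\times m$ sub-block level; the content is identical.
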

\begin{proof}
The core point of the proof consists in a careful look at the matrix $\hat A_n$.

As a first step we assume that $n_j=c_j n$ exactly for any $j=1,\ldots,\nu$. Under the given assumptions on the rational character of $c_j=\frac{m_j}{m}$,  $j=1,\ldots,\nu$, $m={\rm lcm}(\beta_1,\ldots,\beta_\nu)$, the diagonal matrices $\left[I_{m-1} \otimes T_{n'_{j}}(f_{j,j})\right]\oplus X_{n'_{j}}$ reduce exactly to
$I_{m} \otimes T_{n'_{j}}(f_{j,j})$, $j=1,\ldots,\nu$. Hence $\hat A_n$ is a block matrix of the form

\begin{small}
\begin{equation}
\label{eq:A_Toeplitz-equal}
\hat A_n=\left[
\begin{tikzpicture}[baseline=(m.center)]
\matrix (m) [matrix of math nodes,column sep=0.15em,row sep=0.15em] {
|[draw,dashed, minimum width=2cm, minimum height=2cm]| T_{n(m)}(E_{1,1}) & |[draw,dashed, minimum width=2.4cm, minimum height=2cm]| T_{n(m)}(E_{1,2}) & \cdots & |[draw,dashed, minimum width=1.6cm, minimum height=2cm]| T_{n(m)}(E_{1,{\nu}}) \\
|[draw,dashed, minimum width=2cm, minimum height=2.4cm]| T_{n(m)}(E_{2,1}) & |[draw,dashed, minimum width=2.4cm, minimum height=2.4cm]| T_{n(m)}(E_{2,2}) & \cdots & |[draw,dashed, minimum width=1.6cm, minimum height=2.4cm]| T_{n(m)}(E_{2,{\nu}}) \\
\vdots & \vdots & \ddots & \vdots \\
|[draw,dashed, minimum width=2cm, minimum height=1.6cm]| T_{n(m)}(E_{{\nu},1}) & |[draw,dashed,minimum width=2.4cm, minimum height=1.6cm]| T_{n(m)}(E_{{\nu},2}) & \cdots & |[draw,dashed, minimum width=1.6cm, minimum height=1.6cm]| T_{n(m)}(E_{\nu,\nu}) \\
};
\end{tikzpicture}
\right]
\end{equation}
\end{small}
with $n(m)=\frac{n}{m}$ and $E_{j,k}$ as in the assumption for $j,k=1,\ldots,\nu$.
The latter means that we are in the framework of Theorem \ref{equal-blocks-basic}, so that setting $F=\left(E_{i,j}\right)_{i,j=1}^\nu$ we have
\[
\{\hat A_{n}\}_{{n}}\sim_\sigma~{F}
\]
and
\[
\{\hat A_{n}\}_{{n}}\sim_\lambda~{F}
\]
if in addition $F$ is Hermitian-valued (which implies necessarily $s=t$). Now the use of Proposition \ref{prop:equivalence 2} ends the proof in the case where $n_j=c_j n$ exactly for any $j=1,\ldots,\nu$.

The final step concerns the general case where $n_j=c_j n +o(n)$ for any $j=1,\ldots,\nu$. However the claimed thesis is simply obtained from the first step using the powerful extradimensonal approach, i.e. Theorem \ref{th:extradim sv} for the singular value distribution and Theorem \ref{th:extradim eigs} for the eigenvalue distribution.
\end{proof}
\ \\
\noindent

\begin{remark}\label{reducing to essential}
In the proof of Proposition \ref{prop:equivalence 1} and of Proposition \ref{prop:equivalence 2}, the starting point is \cite{GLT-blocks-1-dim}[Remark 2.35] stating that $\{X_{n}-Y_n\}_{{n}}\sim_\sigma 0$ implies $\{X_{n}\}_{{n}}\sim_\sigma G$ iff $\{Y_{n}\}_{{n}}\sim_\sigma G$ with $G$ being a $d$ variate and matrix-valued function. We here add that only needed case is that with $d=s=t=1$ thanks to the non-uniqueness of the singular value symbol (see \cite{nonunique,EMI}) and to its nondecreasing univariate unique rearrangement defined on $[0,1]$ (see \cite{rearr}). Indeed, looking at Definition \ref{def:distributions}, $\{A_{n}\}_{{n}}\sim_\sigma G$ iff $\{A_{n}\}_{{n}}\sim_\sigma \hat G$ where $\hat G$ is any rearrangement of $G$ i.e. ${\hat G}:\hat\Omega \to\mathbb{C}^{s'\times t'}$ is a measurable function defined on a measurable set $\hat \Omega\subset\mathbb R^{\hat \ell}$ with ${\hat \ell}\ge 1$,$0<m_{\hat \ell}(\hat\Omega)<\infty$, $r'=\min\{s',t'\}$ and
for all $F\in\mathcal{C}_0(\mathbb{R}_0^+)$
  \begin{align}
		  		  \frac1{m_\ell(\Omega)}\int_{\Omega}  \frac{\sum_{i=1}^{r}F\left(\sigma_i\left({G} \left(\boldsymbol{\theta}\right)\right)\right)}{r}\,\dd{\boldsymbol{\theta}}=
		  		  \frac1{m_{\hat \ell}(\hat \Omega)}\int_{\hat \Omega}  \frac{\sum_{i=1}^{r'}F\left(\sigma_i\left({\hat G} \left(\boldsymbol{\theta}\right)\right)\right)}{r'}\,\dd{\boldsymbol{\theta}}.
		 \end{align}
 We choose now ${\hat G}=\phi_G$ as the unique nondecreasing rearrangement of $G$ defined on $[0,1]$. Hence $\{X_{n}\}_{{n}}\sim_\sigma G$ iff $\{X_{n}\}_{{n}}\sim_\sigma \phi_G$ and $\{Y_{n}\}_{{n}}\sim_\sigma G$ iff $\{Y_{n}\}_{{n}}\sim_\sigma \phi_G$ from which $\{X_{n}\}_{{n}}\sim_\sigma G$ iff $\{Y_{n}\}_{{n}}\sim_\sigma G$ is equivalent to write $\{X_{n}\}_{{n}}\sim_\sigma \phi_G$ iff $\{Y_{n}\}_{{n}}\sim_\sigma \phi_G$.
\end{remark}
\begin{remark}\label{rmrk:conjecture}
Assume that a class of matrix-sequences $\{\{A_n(l)\}_n: l \}$ is given with $A_n(l)$ as in (\ref{eq:A_Toeplitz}) with $l\in \mathbb{N}^+ $ and with $f_{i,j}$, $i,j=1,\dots,\nu$, fixed functions independent of $l$ as in Definition \ref{def:toeplitz}.
Assume that 1. and 2. are satisfied with the coefficients $c_j(l)$ rational numbers converging to $c_j$, $l\in \mathbb{N}^+$, $j=1,\dots,\nu$. It is clear that the related distribution functions $F_l$ cannot tend to any $F$, simply because according to Theorem \ref{th:general} the size of the symbol
\[
F_l=\left(E_{i,j}(l)\right)_{i,j=1}^\nu
\]
is erratic and depending on the given rational coefficients $c_j(l)$.
However, according to Theorem \ref{th:general}  and Remark \ref{reducing to essential}, we have $\{A_{n}(l)\}_{{n}}\sim_\sigma~{F_l}$ iff $\{A_{n}(l)\}_{{n}}\sim_\sigma~{\phi_{ F_l}}$, with $\phi_{ F_l}$ being the nondecreasing rearrangement of $F_l$ defined on $[0,1]$.
Now it makes sense to ask if $F_l$ converges to some $F$ as $l$ tends to infinity. A positive answer and the use of the notion of approximating class of sequences with the related topology (see \cite{GSI} and references therein)  would represent a possibility of extending Theorem \ref{th:general} to the case where some of the $c_j$ are not rational numbers. A preliminary study with related numerical experiments is presented in \cite{pre-prequel} for the case of $\nu=2$ and $s=t=1$.
Very recently, in \cite{gacs} the notion of approximating class of sequences has been revisited and generalized, including the case where the matrix sizes of the approximants and of the original matrix-sequence are not the same. Such idea relies again on the extradimensional techniques discussed in \cite{pre-prequel} and it has been used in \cite{blocking-conj} for obtaining the distributional results when the $c_j$ are not necessarily rational for $s=t=1$ and $j=1,\dots,\nu$.
\end{remark}

\section{Numerical experiments}\label{sec:num}
The Section contains selected examples which provide the numerical confirmation of the findings in Section \ref{sec:general_block}
and Section \ref{sec:Toeplitz_block}.
The singular value and spectral distributions of a matrix-sequence are defined by equations (\ref{eq:distribution_sv}) and (\ref{eq:distribution_eig}) of Definition \ref{def:distributions}. The following remark provides an informal interpretation of Definition \ref{def:distributions} and outlines the numerical and computational explanation to verify the distributions in practice.

\begin{remark}\label{rmrk:distributions}
Let us consider a setting as in Definition \ref{def:distributions}, taking into account Remark \ref{rmrk:multidimensional}.
Let $\lambda_1(F), \ldots,\lambda_s(F)$ and  $\sigma_1(F),
\ldots,\sigma_s(F)$ be the eigenvalue and the singular value functions of a
$s\times s$ matrix-valued function $F$, respectively. If $F$ is
smooth enough, an informal interpretation of the limit relation
\eqref{eq:distribution_sv} (resp. \eqref{eq:distribution_eig})
is that when the matrix-size of $A_{n}$ is sufficiently large,
then $d_n/s$ eigenvalues (resp. singular values) of $A_{n}$ can
be approximated by a sampling of $\lambda_1(F)$ (resp. $\sigma_1(F)$)
on a uniform equispaced grid of the domain $G$, and so on until the
last $d_n/\nu$ eigenvalues (resp. singular values) which can be approximated by an equispaced sampling
of $\lambda_s(F)$ (resp. $\sigma_s(F)$) in the domain.

Notice that, in accordance with \eqref{eq:distribution_sv}, $s$ has to be replaced by $r=\min\{s,t\}$ when $F$ is rectangular $s\times t$ and in that case only a distribution in the sense of the singular values can be considered.

Notice that, in accordance with \cite{au2,au1}, any equispaced grid-sequence can be replaced by any asymptotically uniform grid-sequence.
\end{remark}

Here we consider the matrix-sequence $\{A_n\}_n$, where, for each $n$, $A_n$ is a $d_n\times d_n$ matrix as in (\ref{eq:A_Toeplitz}). We select the following groups of examples:
\begin{enumerate}
\item $\nu=2$, $t=s=1$. First, we consider a $2\times 2$ block case where we select a set of scalar valued functions  $f_{i,j}$. For this first group of examples we fix the sizes of the Toeplitz blocks equal to:
\begin{enumerate}
\item[a)] $n_1=\eta$, $n_2=2\eta$;
\item[b)] $n_1=\eta$, $n_2=2\eta+4$;
\item[c)] $n_1=\eta$, $n_2=2\eta+\lceil{\sqrt{\eta}\,\rceil}$.
\end{enumerate}

\item  $\nu=2$, $s=1,$ $t=2$. Here, $A_n$ is a $2\times 2$ block matrix where the Toeplitz blocks are associated with $1\times 2$ rectangular block functions $f_{i,j}$.
The considered dimensions are:
\begin{enumerate}
\item[a)] $n_1=\eta$, $n_2=2\eta$;
\item[b)] $n_1=\eta$, $n_2=2\eta+4$;
\item[c)] $n_1=\eta$, $n_2=2\eta+\lceil{\sqrt{\eta}\,\rceil}$.
\end{enumerate}

\item $\nu=3$, $t=s=2$. Then, we consider a $3\times 3$ block matrix where the Toeplitz blocks are generated by $2\times 2$ matrix-valued functions $f_{i,j}$. Here, we take $n_1=\eta$, $n_2=\frac{\eta}{2}$, $n_3=2\eta -2$.
\end{enumerate}
Note that in all the three items the dimension choices satisfy the assumptions at the beginning of Section \ref{sec:general_block}.
\paragraph{Group 1. $2\times 2$ case with scalar valued $f_{i,j}$}

For the setting $\nu=2$, $t=s=1$ we consider the following trigonometric polynomials
\begin{equation}
\label{gruppo1_t1s1}
\begin{split}
&f_{1,1}(t) =  2-2\cos(t);\quad f_{2,2}(t) =  2-2\cos(t)-6\cos(2t);\\
&
f_{1,2}(t) =  1-{\rm e}^{-\iota t};\quad f_{2,1}(t) = 1-{\rm e}^{\iota t}.
\end{split}
\end{equation}
Then
\[A_n=\begin{bmatrix}
T_{n_1}(f_{1,1}) &  T_{n_1,n_2}(f_{1,2})\\
T_{n_2,n_1}(f_{2,1}) &  T_{n_2}(f_{2,2})
\end{bmatrix}\]
and for all the dimension relation choices 1.a)-1.b)-1.c) we have
\[
\{A_n\}_n\sim_{\sigma}F
\]
with
\begin{equation}
\label{F1}
F=
\left[\begin{array}{c | cc}
f_{1,1} & f_{1,2}& 0 \\
\hline
f_{2,1} & f_{2,2}& 0  \\
0 &0 &f_{2,2}  \\
\end{array}\right].
\end{equation}
Since $F=F^*$, it also hold $\{A_n\}_n\sim_{\lambda}F$.

In order to numerically show the latter asymptotic distributions, we fix $\eta=20,40,80$ for the size choice 1.a)-1.b) and $\eta=25,49,81$ for the choice 1.c). In Figures \ref{fig:nu1t1s1_eig_eta_2eta_204080}, \ref{fig:nu1t1s1_eig_eta_2eta4_204080}  and \ref{fig:nu1t1s1_eig_eta_2etasqrt_254981}
we can observe the comparison between the eigenvalues of $A_n$, denoted by ${\rm eig}$, and the samplings of the eigenvalue function $\lambda_l$ of $F$ over $$\theta_{\eta}=\left\{-\frac{\pi\eta}{\eta+1}+\frac{2j\eta\pi}{(\eta+1)(\eta-1)}, \, j=0,\dots, \eta-1\right\},$$ which is a uniform grid of $\eta$ points over $[-\pi,\pi]$. The analytical expression of the eigenvalue functions is obtained using \textit{MATLAB} symbolic computation.

\begin{figure}[htb]
\begin{subfigure}[c]{.50\textwidth}
\includegraphics[width=\textwidth]{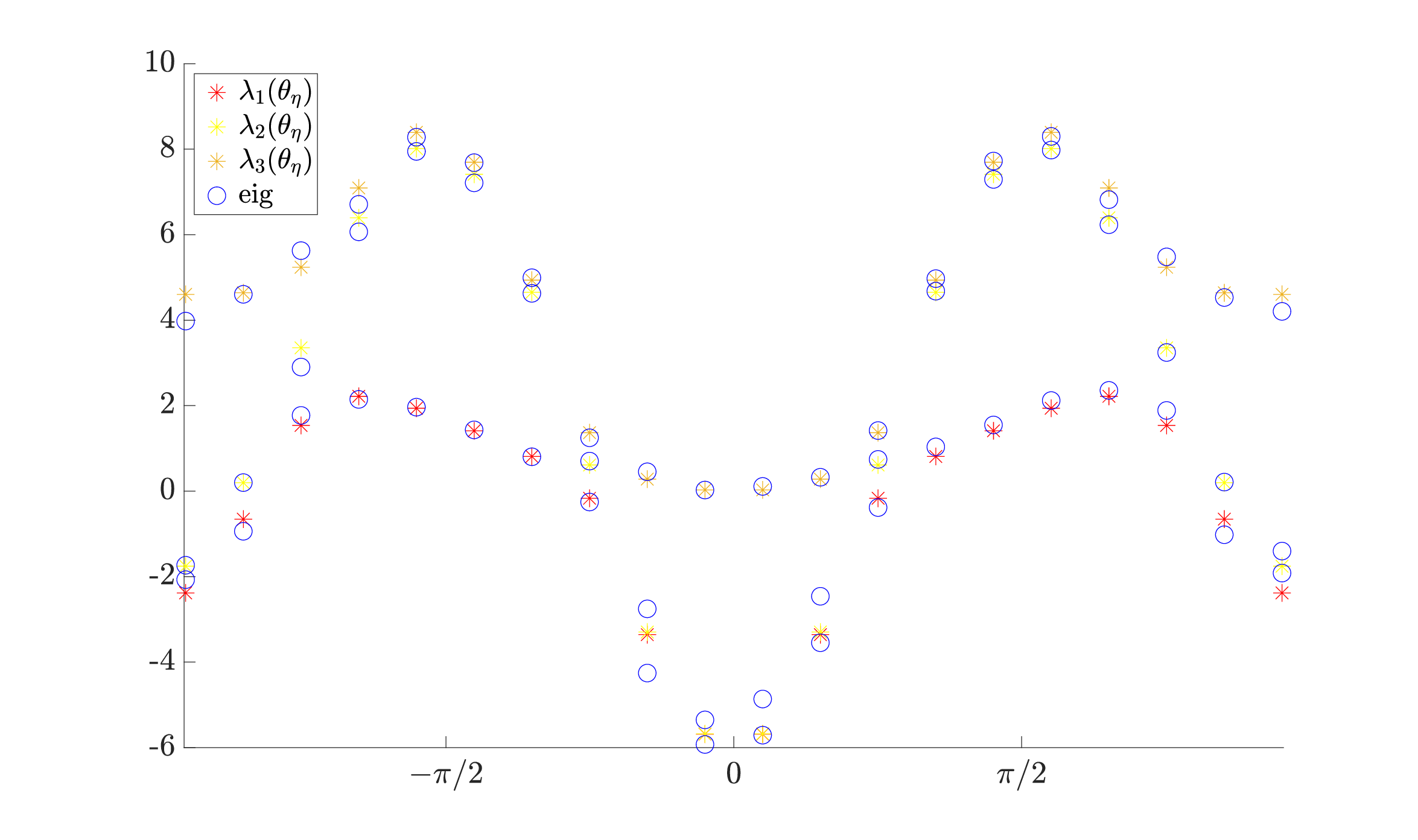}
\end{subfigure}
\begin{subfigure}[c]{.50\textwidth}
\includegraphics[width=\textwidth]{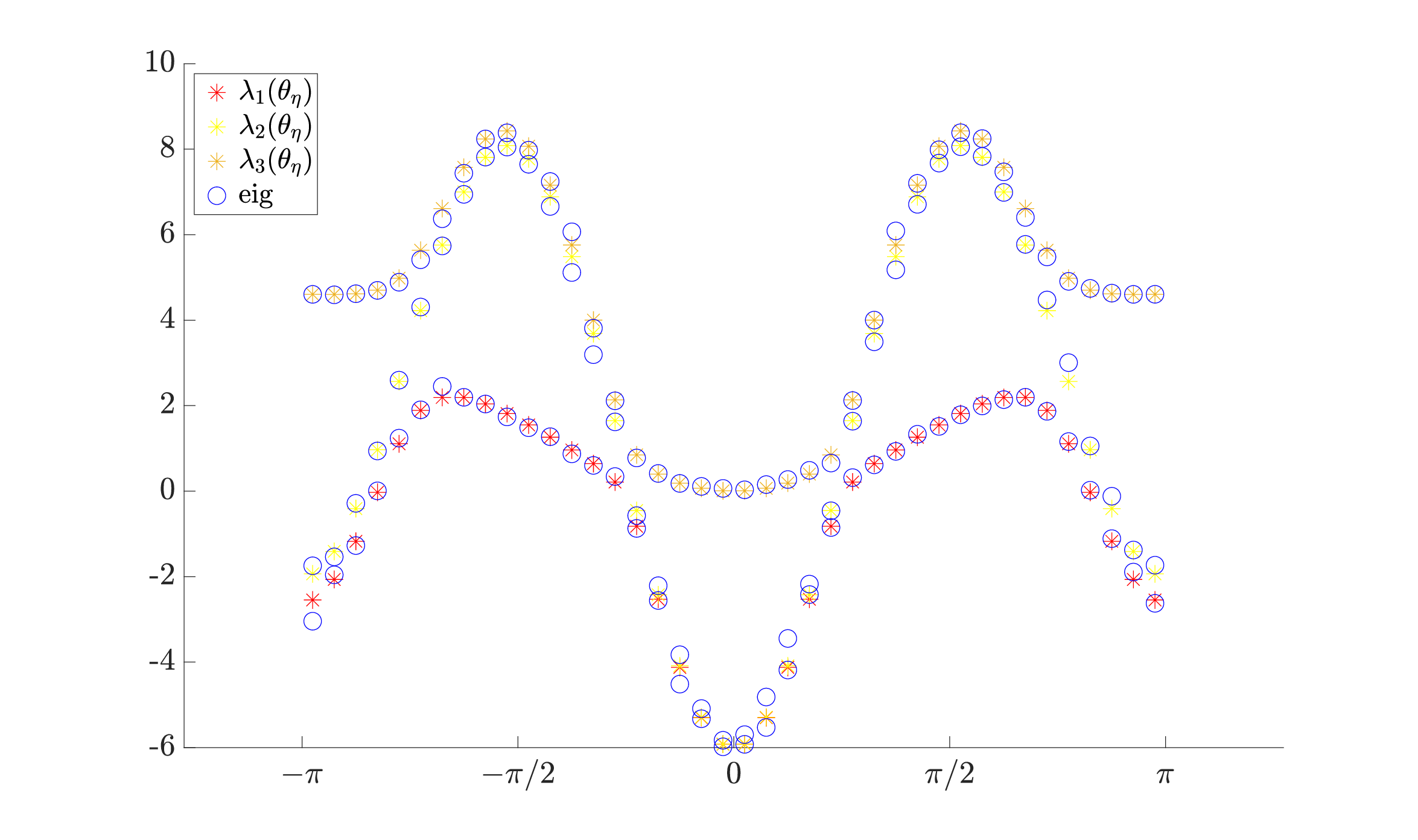}
\end{subfigure}\\
\begin{center}
\begin{subfigure}[c]{.80\textwidth}
\includegraphics[width=\textwidth]{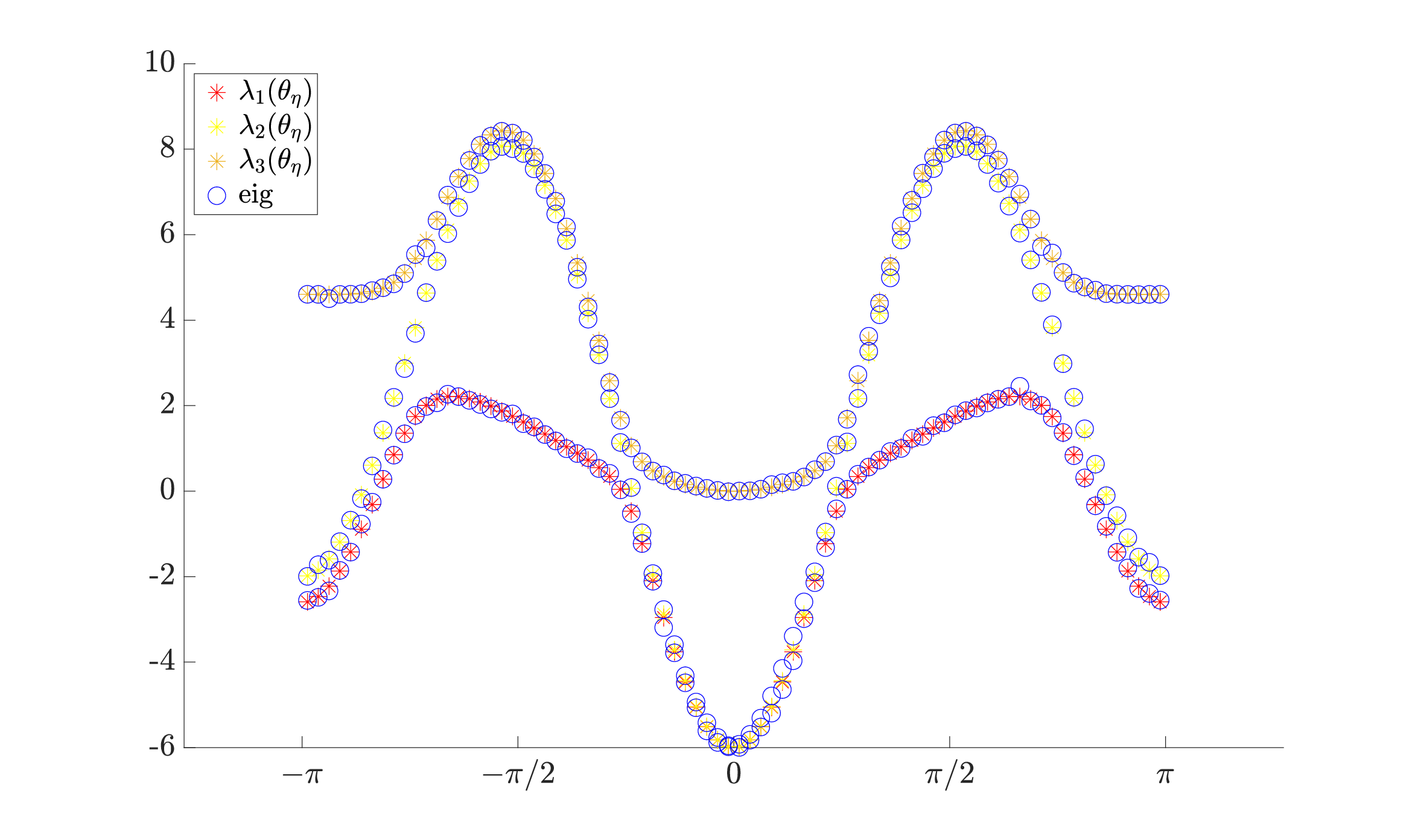}
\end{subfigure}
\end{center}
\caption{Comparison between the eigenvalue functions $\lambda_l(F)$, $l=1,\ldots,3$  and the eigenvalues of $A_n$, for $\nu=2$, $t,s=1$, $n_1=\eta$, $n_2=2\eta$, $\eta= 20, 40, 80 $.}
\label{fig:nu1t1s1_eig_eta_2eta_204080}
\end{figure}

\begin{figure}[htb]
\begin{subfigure}[c]{.50\textwidth}
\includegraphics[width=\textwidth]{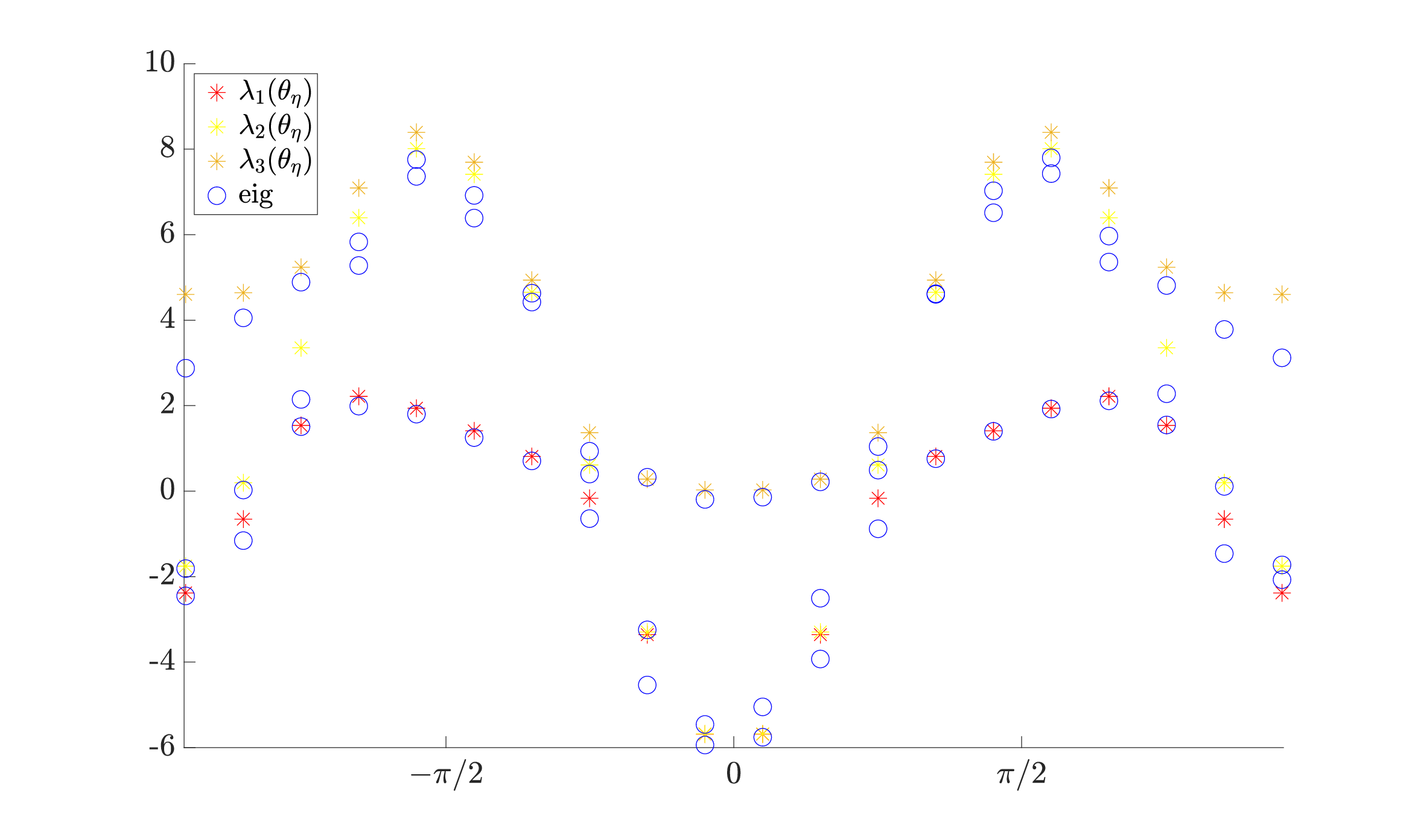}
\end{subfigure}
\begin{subfigure}[c]{.50\textwidth}
\includegraphics[width=\textwidth]{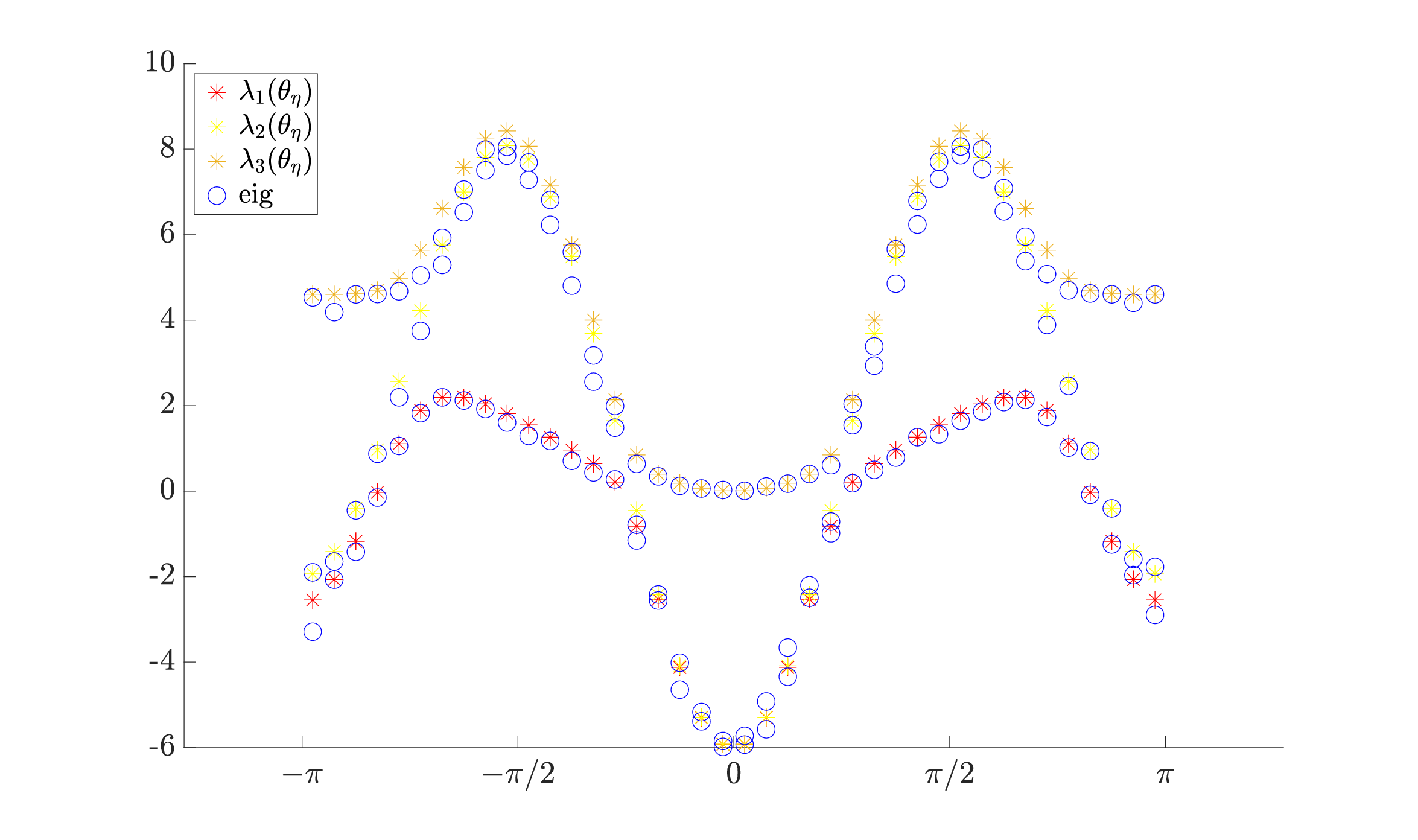}
\end{subfigure}\\
\begin{center}
\begin{subfigure}[c]{.80\textwidth}
\includegraphics[width=\textwidth]{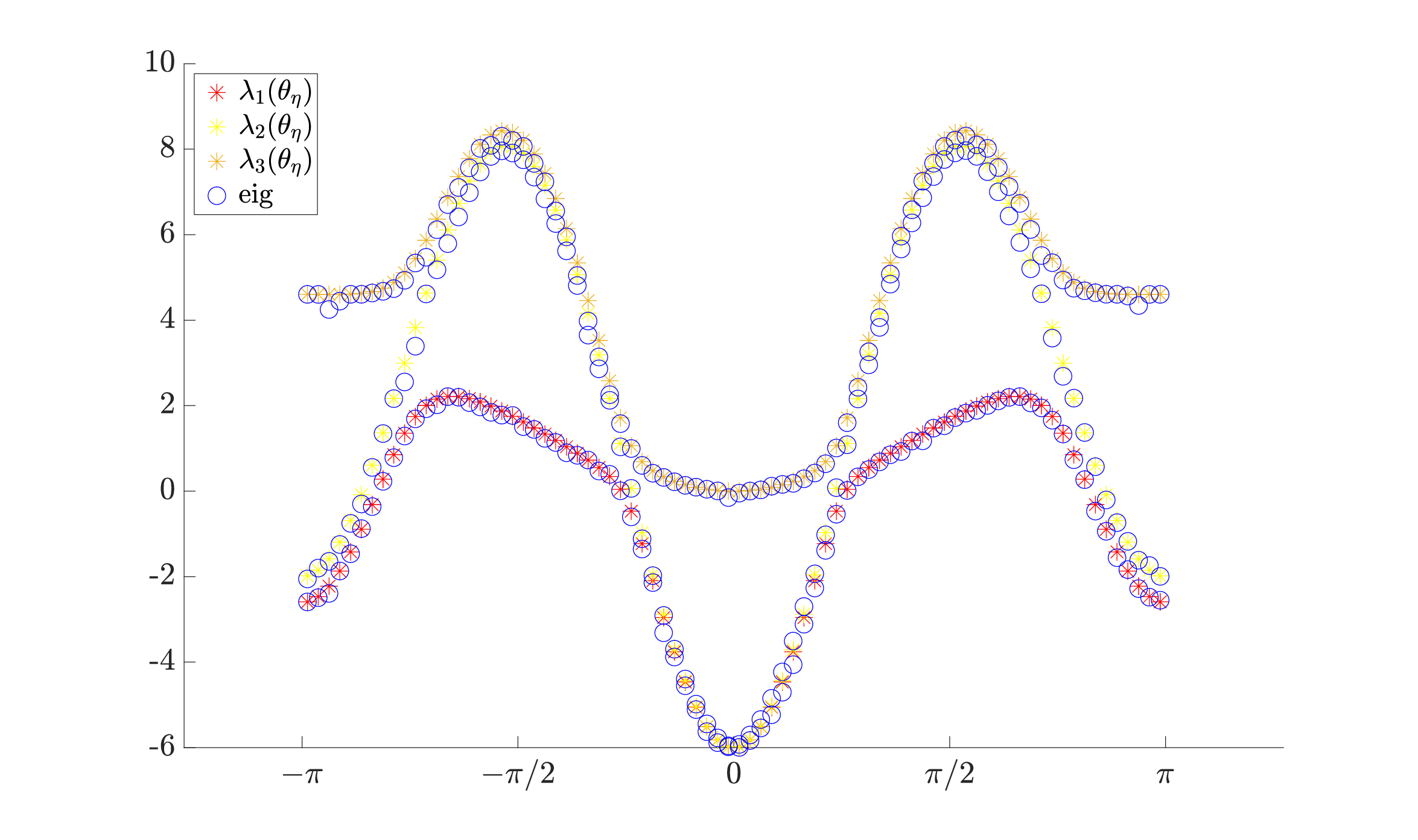}
\end{subfigure}
\end{center}
\caption{Comparison between the eigenvalue functions $\lambda_l(F)$, $l=1,\ldots,3$  and the eigenvalues of $A_n$, for $\nu=2$, $t,s=1$, $n_1=\eta$, $n_2=2\eta+4$, $\eta= 20, 40, 80 $.}
\label{fig:nu1t1s1_eig_eta_2eta4_204080}
\end{figure}

\begin{figure}[htb]
\begin{subfigure}[c]{.50\textwidth}
\includegraphics[width=\textwidth]{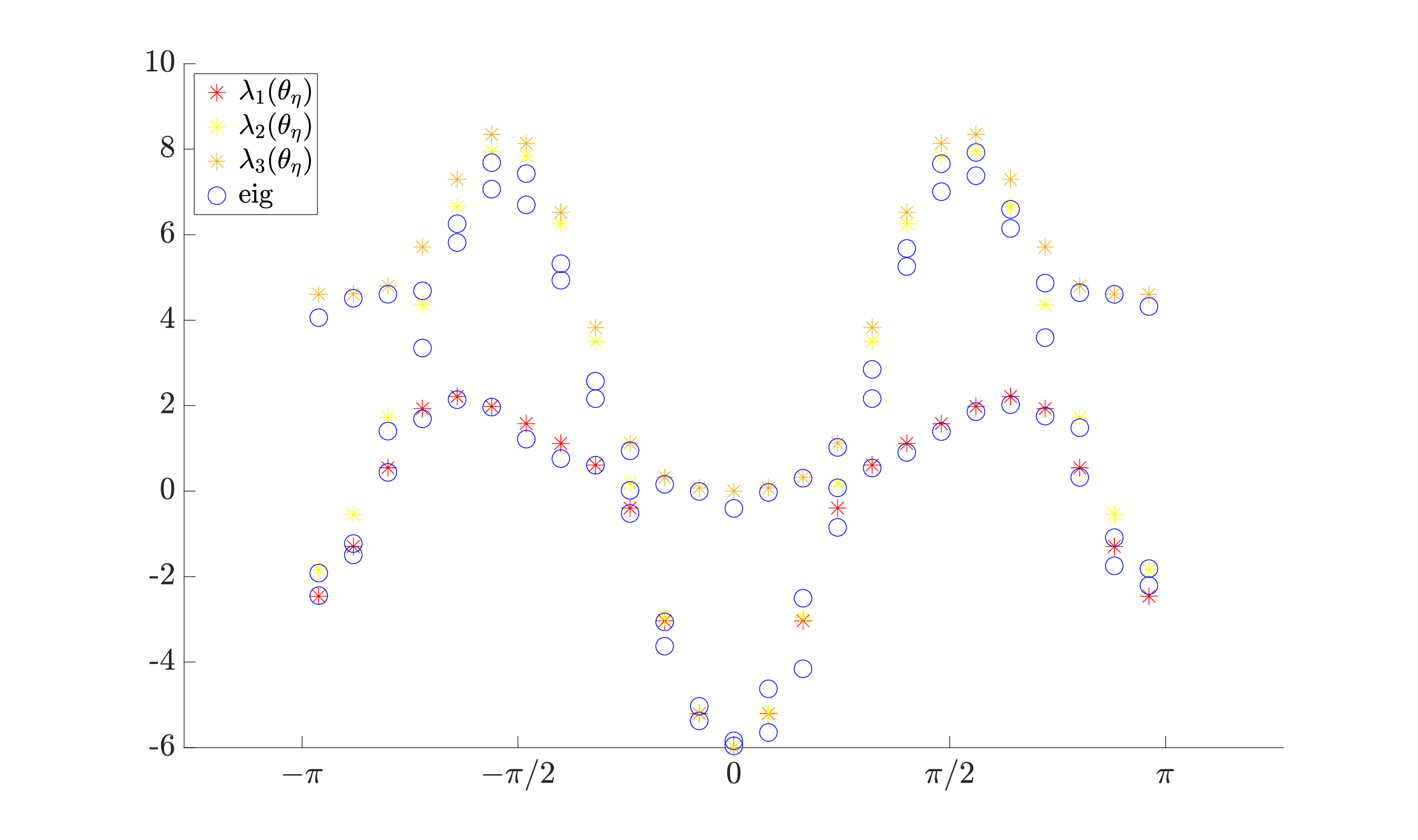}
\end{subfigure}
\begin{subfigure}[c]{.50\textwidth}
\includegraphics[width=\textwidth]{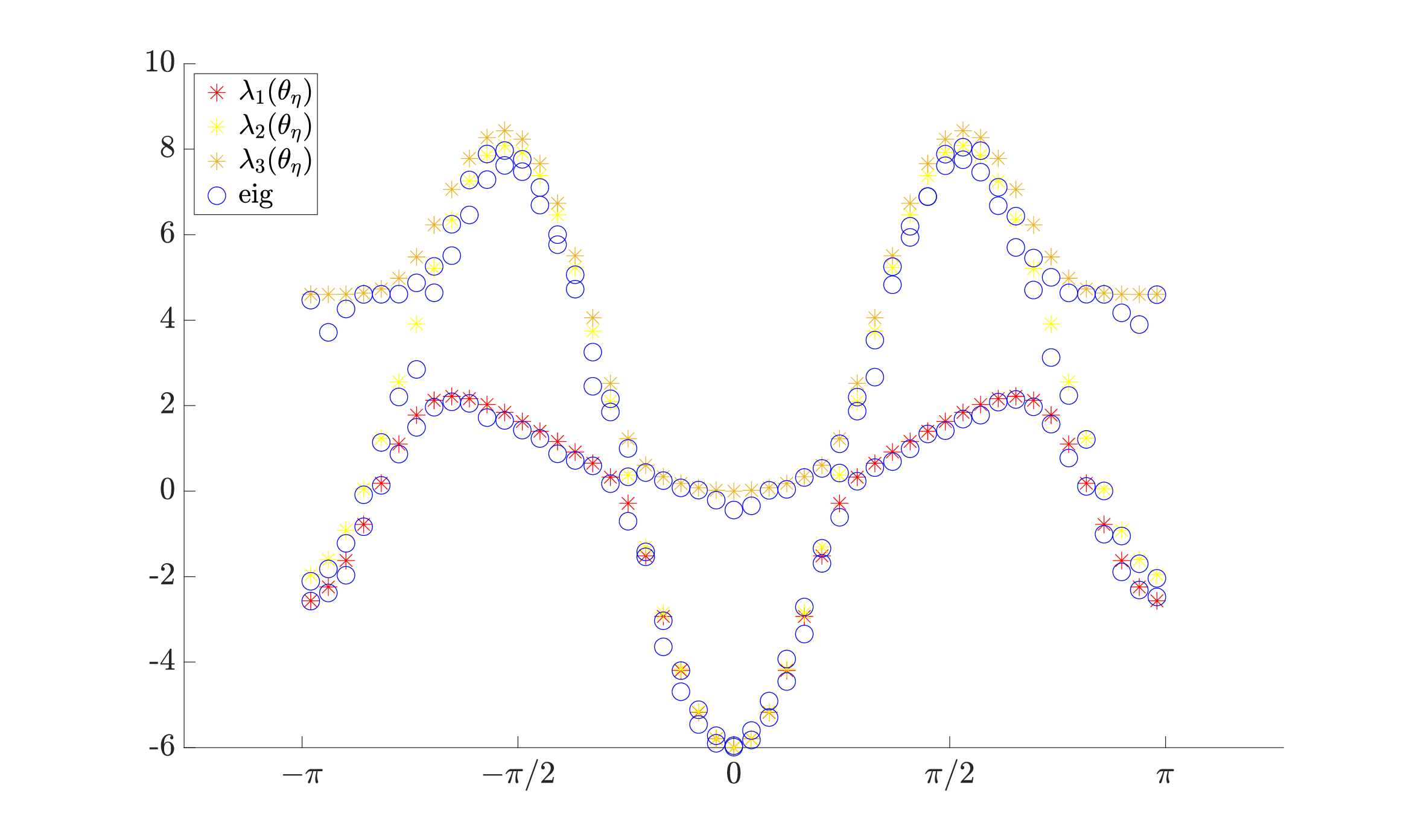}
\end{subfigure}\\
\begin{center}
\begin{subfigure}[c]{.80\textwidth}
\includegraphics[width=\textwidth]{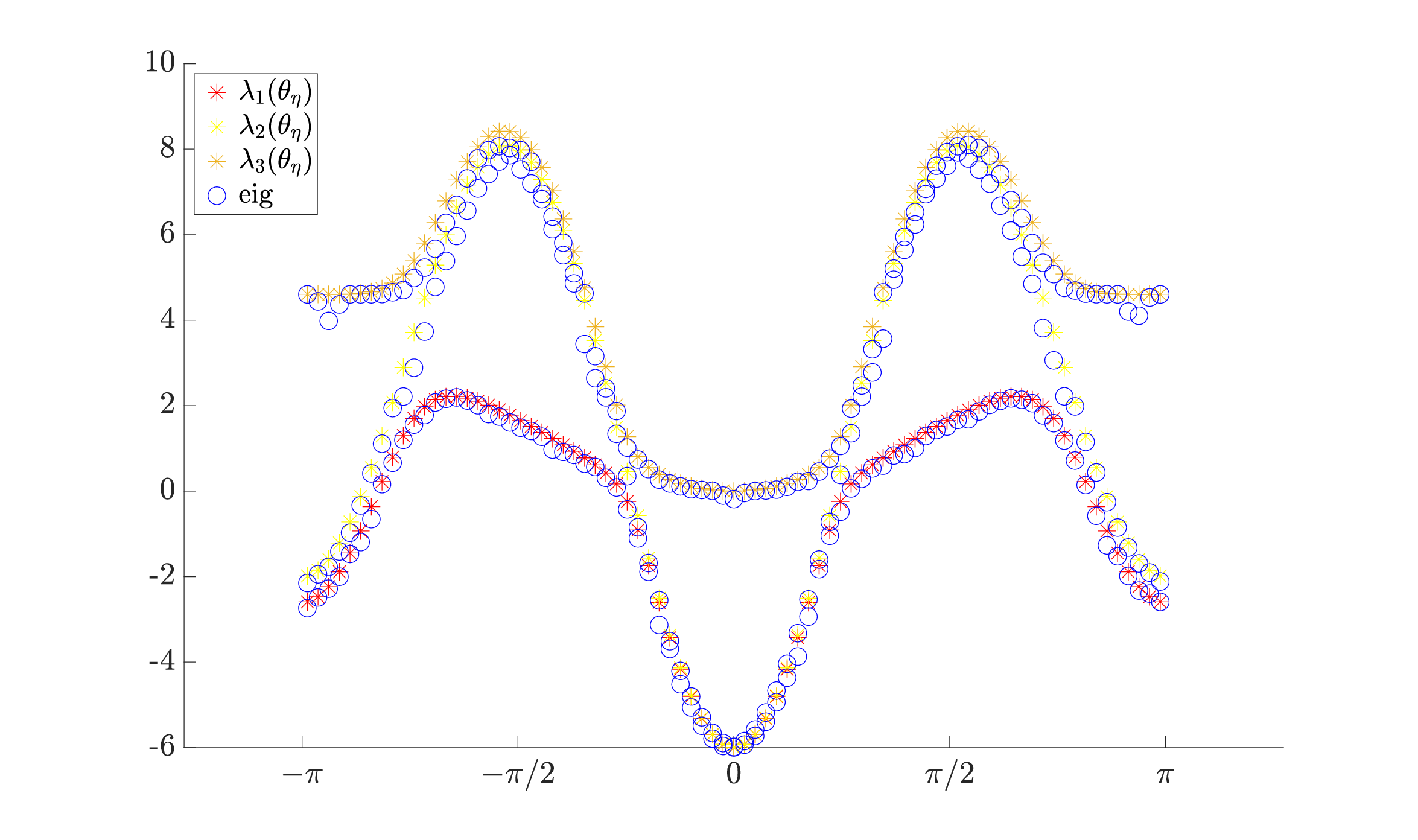}
\end{subfigure}
\end{center}
\caption{Comparison between the eigenvalue functions $\lambda_l(F)$, $l=1,\ldots,3$  and the eigenvalues of $A_n$, for $\nu=2$, $t,s=1$, $n_1=\eta$, $n_2=2\eta+\lceil{\sqrt{\eta}\,\rceil}$, $\eta= 25, 49, 81 $.}
\label{fig:nu1t1s1_eig_eta_2etasqrt_254981}\end{figure}

 The graph comparison highlights  that each eigenvalue function approximates roughly $1/3$ of the eigenvalues of $A_n$ and the estimation becomes more precise with $n\rightarrow \infty$  as indicated by Remark \ref{rmrk:distributions}, numerically confirming the spectral distribution result in Theorem \ref{almost equal-blocks-basic}. Similarly, the singular value distribution can be numerically tested with a comparison between the singular values of $A_n$, denoted by $\sigma(A),$ and the singular value functions $\sigma_l(F)$, $l=1,\ldots,3$ sampled over $\theta_{\eta}$. The approximation result is shown in Figure \ref{fig:nu1t1s1_sing_eta_2etasqrt_256} for the dimension choice 1.c) and $\eta= 256$.

 \begin{figure}[htb]
\begin{center}
\includegraphics[width=0.8\textwidth]{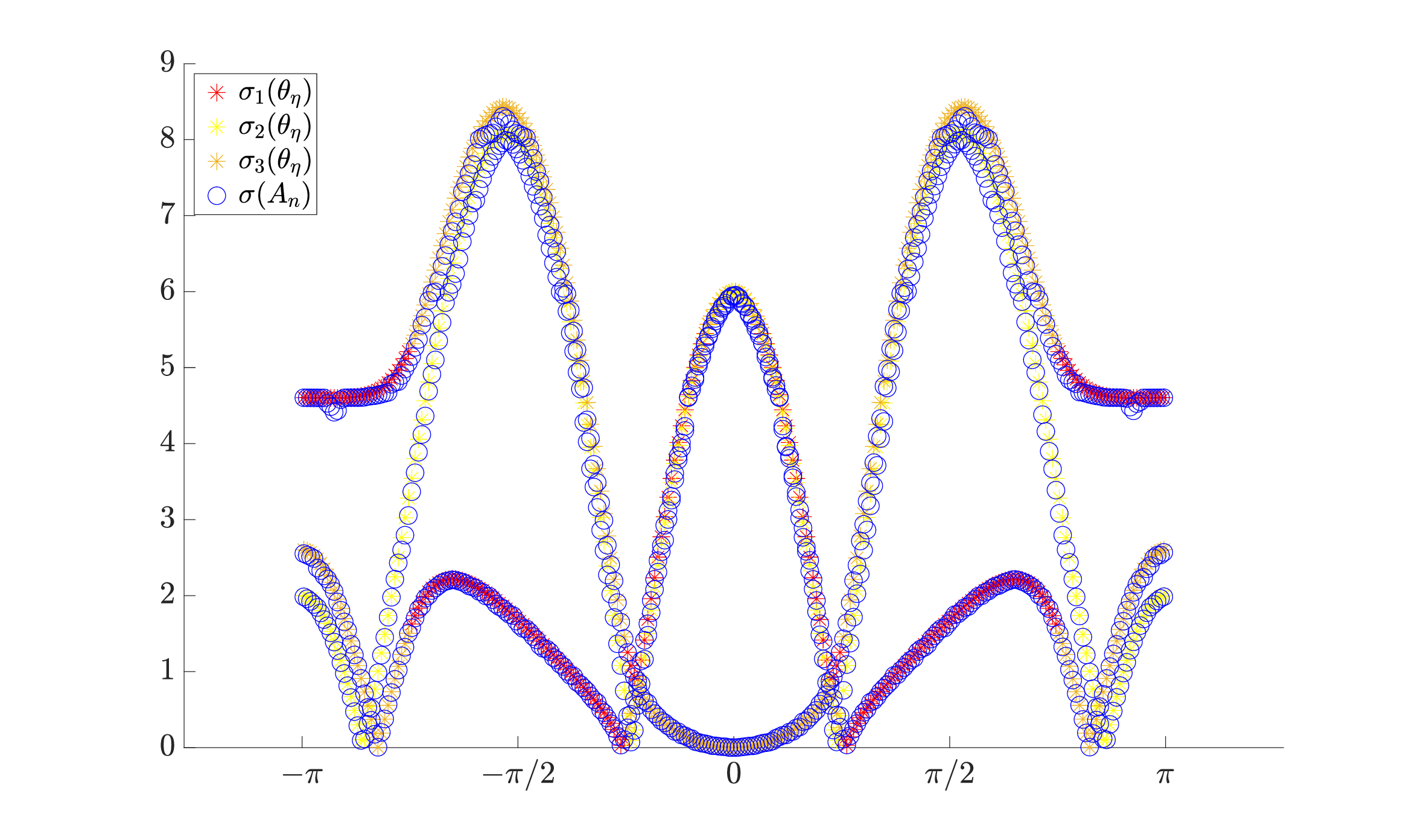}
\end{center}
\caption{Comparison between the singular value functions $\sigma_l(F)$, $l=1,\ldots,3$  and the singular values of $A_n$, for $\nu=2$, $t,s=1$, $n_1=\eta$, $n_2=2\eta+\lceil{\sqrt{\eta}\,\rceil}$, $\eta= 256$.}
\label{fig:nu1t1s1_sing_eta_2etasqrt_256}
\end{figure}

We conclude this group of examples with a test on the singular value distribution in a case where one of the $f_{i,j}$  is an $L^1$ function and moreover the resulting $F$ has the same structure of (\ref{F1}) but $F\neq F^*$. In particular, we take
\begin{equation}
\label{gruppo1_t1s1_singolari}
\begin{split}
&f_{1,1}(t) =  \theta^2; \quad f_{2,2}(t) =  2-2\cos(t)-6\cos(2t);\\
&
f_{1,2}(t) =  1-{\rm e}^{-\iota t};\quad f_{2,1}(t) = f_{1,2}(t).
\end{split}
\end{equation}
We observe the numerical confirmation of $\{A_n\}_n\sim_\sigma F$ in Figure \ref{fig:nu1t1s1_sing_eta_2etasqrt_254981}, for the dimension choice 1.c) and $\eta= 25,49,81$.

\begin{figure}[htb]
\begin{subfigure}[c]{.50\textwidth}
\includegraphics[width=\textwidth]{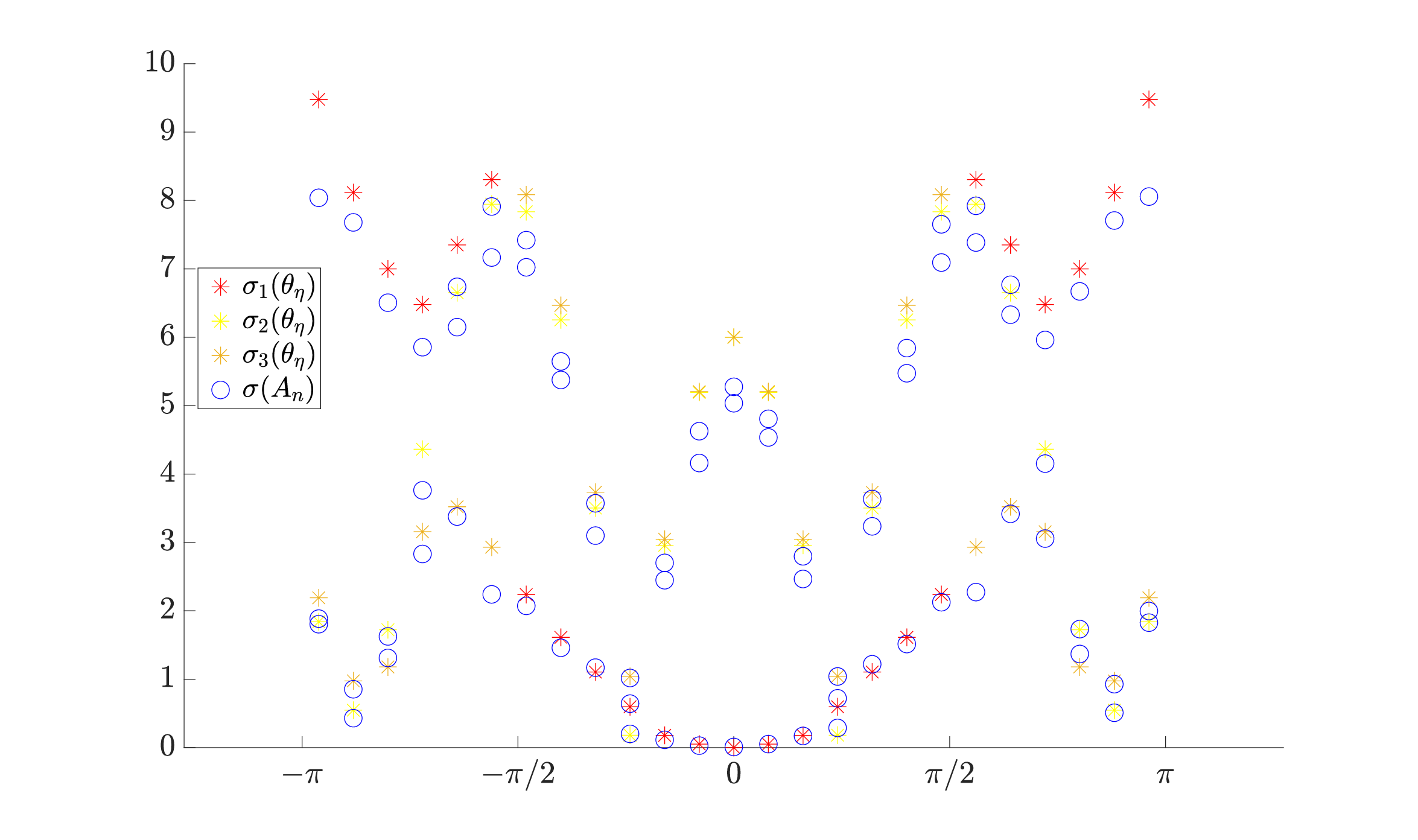}
\end{subfigure}
\begin{subfigure}[c]{.50\textwidth}
\includegraphics[width=\textwidth]{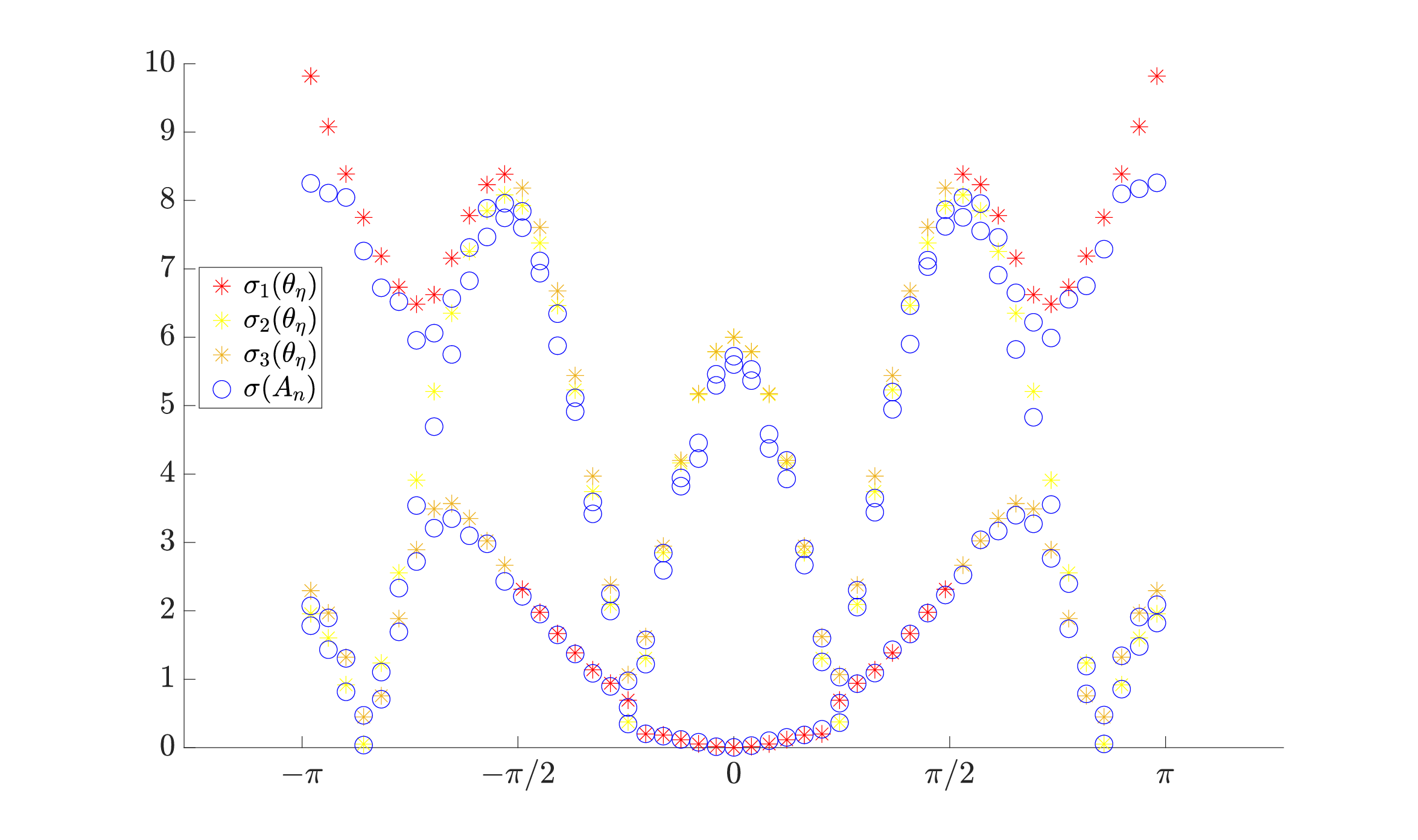}
\end{subfigure}\\
\begin{center}
\begin{subfigure}[c]{.80\textwidth}
\includegraphics[width=\textwidth]{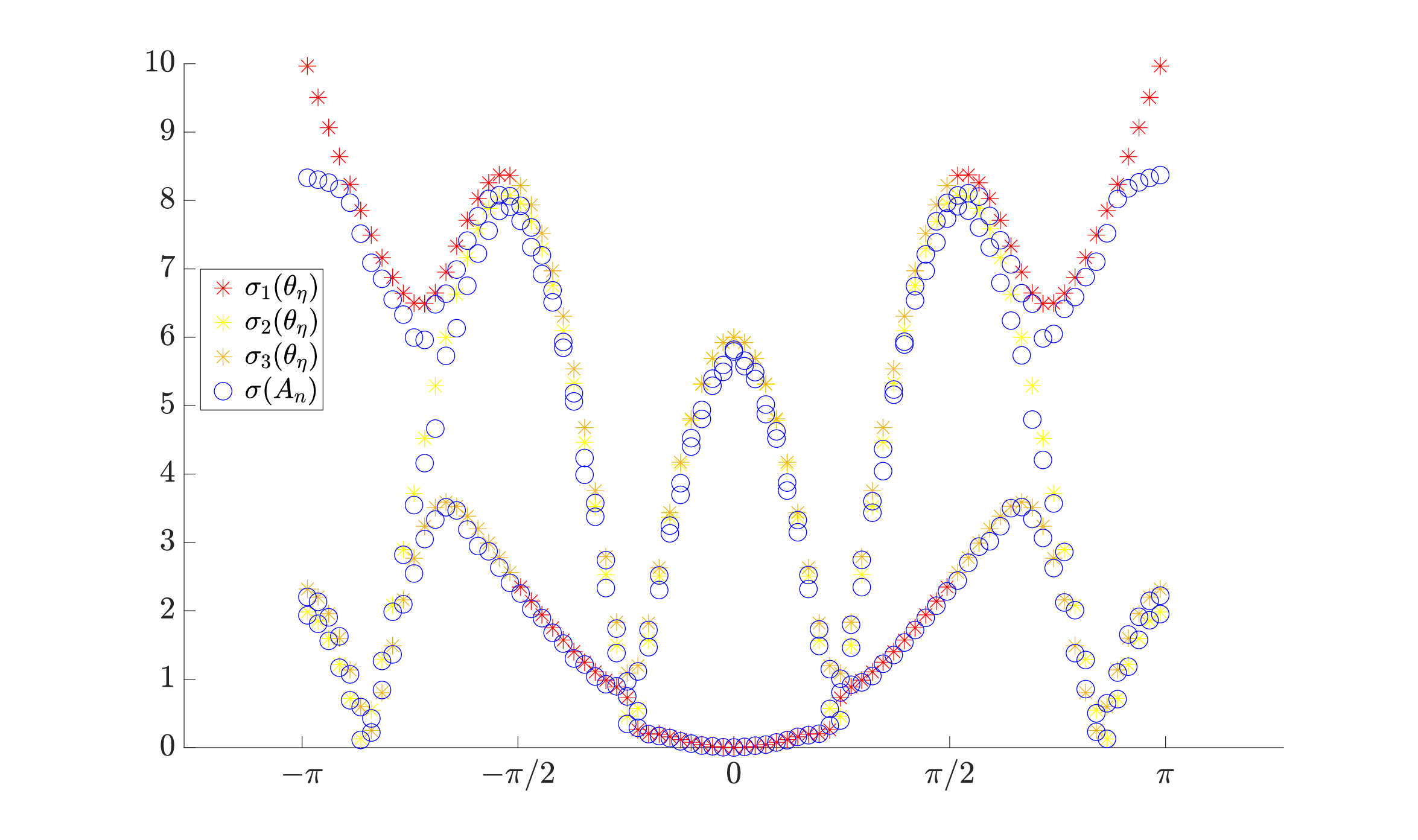}
\end{subfigure}
\end{center}
\caption{Comparison between the singular value functions $\sigma_l(F)$, $l=1,\ldots,3$  and the singular values of $A_n$, for $\nu=2$, $t,s=1$, $n_1=\eta$, $n_2=2\eta+\lceil{\sqrt{\eta}\,\rceil}$, $\eta= 25, 49, 81 $.}
\label{fig:nu1t1s1_sing_eta_2etasqrt_254981}\end{figure}

Moreover, we demonstrate that the cardinality of the singular values that deviate from the behaviour predicted by  $\sigma_l(F)$ is $o(d_n)$. Precisely, we fix the tolerance $h= 10^{-1}$, we order both the  singular values and the samplings of $\sigma_l(F)$, $l=1,\dots,3$ in an increasing order and  compute the vector $out^{(\sigma)}$ as the element-wise absolute difference between these two ordered vectors. Table \ref{tab:outliers_sigma_group1} shows the ratio between $\#\{out^{(\sigma)}\ge h\}$ and the size of $A_n$ tends to 0 as $n$ increases, confirming that the number of outliers that do not mimic the singular value functions is $o(d_n)$, as predicted by relation $\{A_n\}_n\sim_\sigma F$.

\begin{table}[htb]
\begin{center}
\begin{tabular}{|c|c|}
\hline
$\eta$ &$\#\{out^{(\sigma)}\ge h\}$/$d_n$ \\
\hline\hline
81&  0.5833\\
144&  0.5135\\
225& 0.4362\\
324& 0.3949\\
441& 0.3638\\
576& 0.3191\\
729& 0.2525\\
900&  0.1564\\
\hline
\end {tabular}
\caption{Ratio between the cardinality of $\sigma_i(A_n)$ that do not behave as the sampling $\sigma_i(F)$ and the dimension of $A_n$, for $\eta = (3x)^2$, $x=3,\dots,10$.}\label{tab:outliers_sigma_group1}
\end{center}
\end{table}

\paragraph{Group 2. $2\times 2$ case with rectangular matrix-valued $f_{i,j}$}

For the setting $\nu=2$, $s=1$, $t=2$ we consider the following rectangular matrix-valued functions

\begin{equation}
\label{gruppo2_t2s1}
\begin{split}
&f_{1,1}(t) =  (2-2\cos(t), 4+6\cos(2t));\quad f_{2,2}(t) =  (3+2\cos(t),4+6\cos(t)-2\cos(2t));\\
&
f_{1,2}(t) =  (1+{\rm e}^{\iota t}, 1-{\rm e}^{-\iota t});\quad f_{2,1}(t) = f_{1,2}(t).
\end{split}
\end{equation}

Then,

\[A_n=\begin{bmatrix}
T_{n_1}(f_{1,1}) &  T_{n_1,n_2}(f_{1,2})\\
T_{n_2,n_1}(f_{2,1}) &  T_{n_2}(f_{2,2})
\end{bmatrix}\]
and for all the dimension relation choices 2.a)-2.b)-2.c) we have
\[\{A_n\}_n\sim_{\sigma}F,\]
with
\begin{equation}
\label{F2}
F=
\left[\begin{array}{c | cc}
f_{1,1} & f_{1,2}& 0 \\
\hline
f_{2,1} & f_{2,2}& 0  \\
0 &0 &f_{2,2}  \\
\end{array}\right].
\end{equation}

In order to numerically show the latter asymptotic distributions, we fix $\eta=20, 40, 80$ for the size choice 2.a)-2.b) and $\eta=25, 49, 81$ for the choice 2.c). In Figures \ref{fig:nu2t2s1_eig_eta_2eta_204080}, \ref{fig:nu2t2s1_eig_eta_2eta4_204080} and \ref{fig:nu2t2s1_eig_eta_2etasqrt_254981} we can observe the comparison between the singular values of $A_n$, denoted by ${\sigma(A_n)}$, and the samplings of the singular value functions $\sigma_l$ of $F$ over $\theta_{\eta}$ which, as above, is a uniform grid of $\eta$ points over $[-\pi,\pi]$. The analytical expression of the singular value functions is obtained using \textit{MATLAB} symbolic computation from equation \eqref{F2}.

\begin{figure}[htb]
\begin{subfigure}[c]{.50\textwidth}
\includegraphics[width=\textwidth]{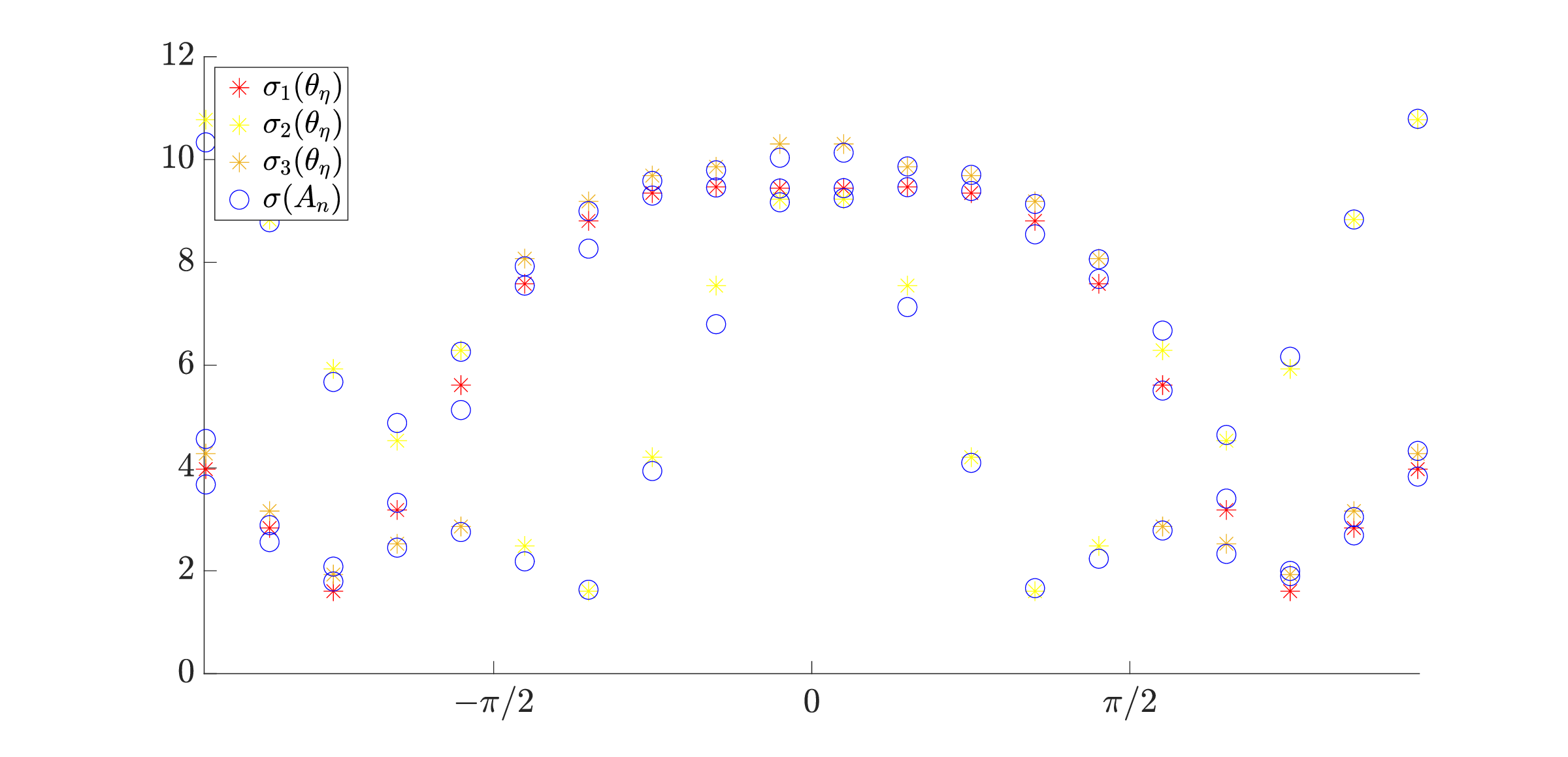}
\end{subfigure}
\begin{subfigure}[c]{.50\textwidth}
\includegraphics[width=\textwidth]{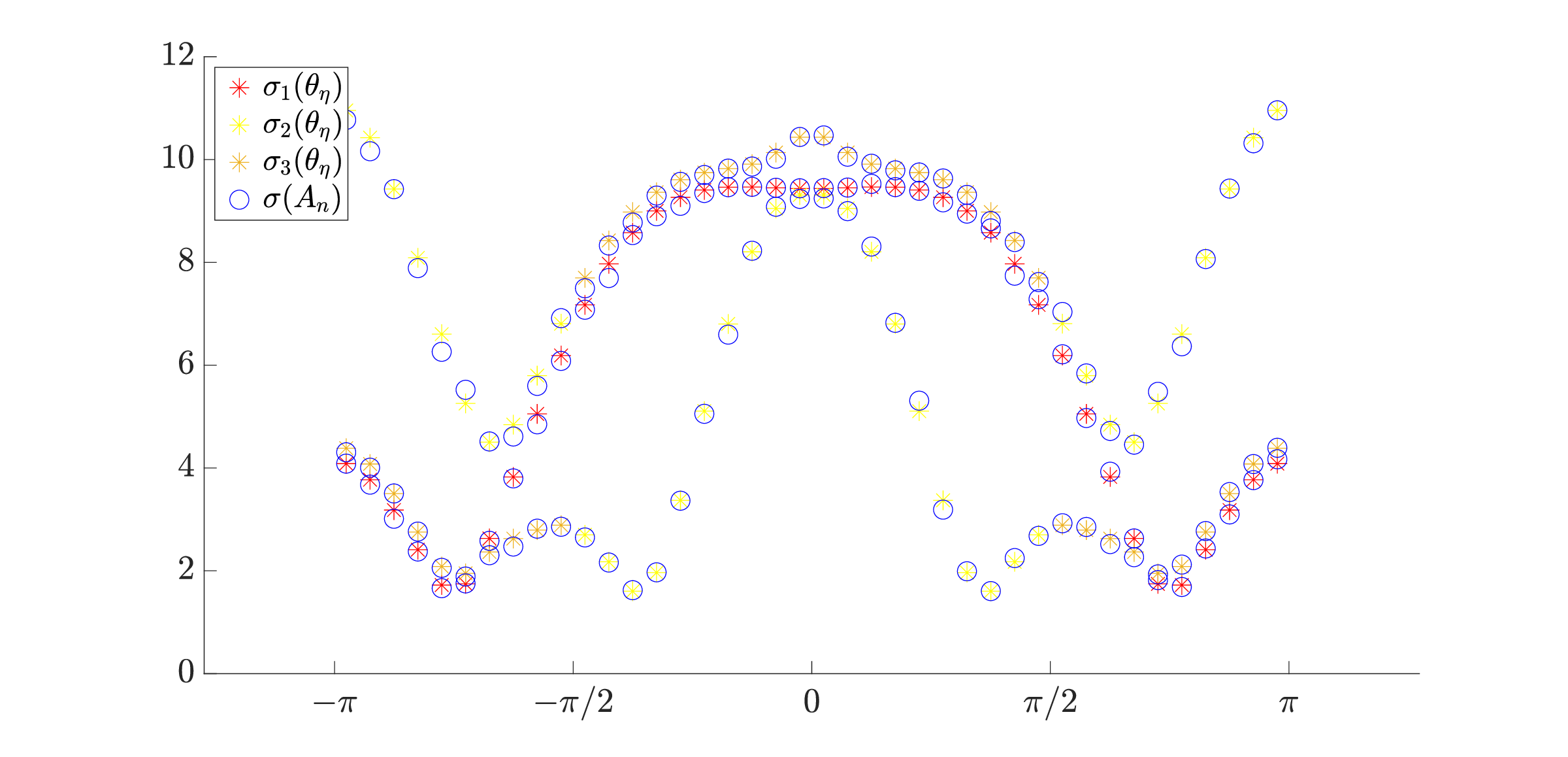}
\end{subfigure}\\
\begin{center}
\begin{subfigure}[c]{.80\textwidth}
\includegraphics[width=\textwidth]{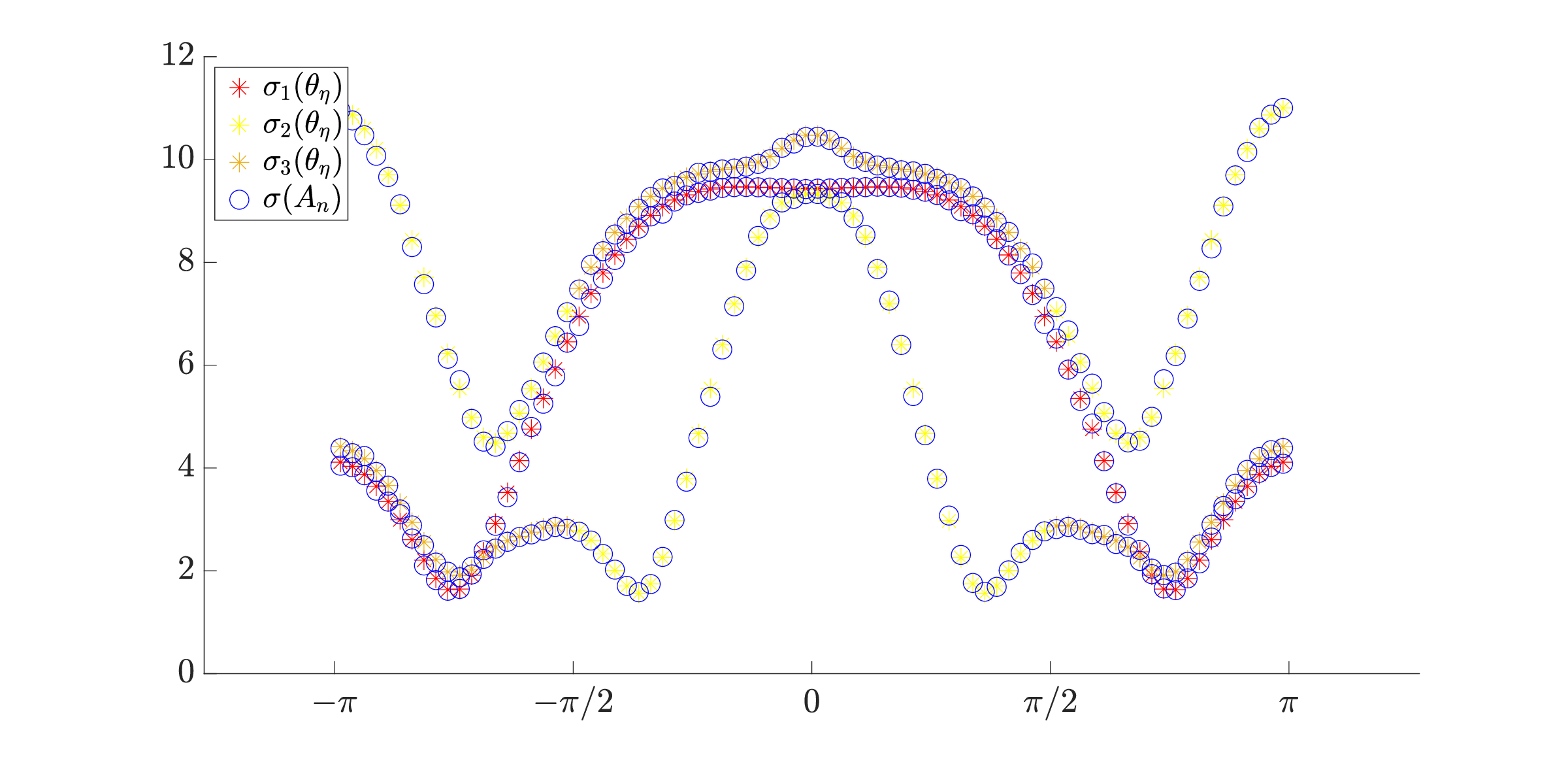}
\end{subfigure}
\end{center}
\caption{Comparison between the singular value functions $\sigma_l(F)$, $l=1,\ldots,3$  and the singular values of $A_n$, for $\nu=2$, $t=2,s=1$, $n_1=\eta$, $n_2=2\eta$, $\eta= 20, 40, 80 $.}
\label{fig:nu2t2s1_eig_eta_2eta_204080}
\end{figure}

\begin{figure}[htb]
\begin{subfigure}[c]{.50\textwidth}
\includegraphics[width=\textwidth]{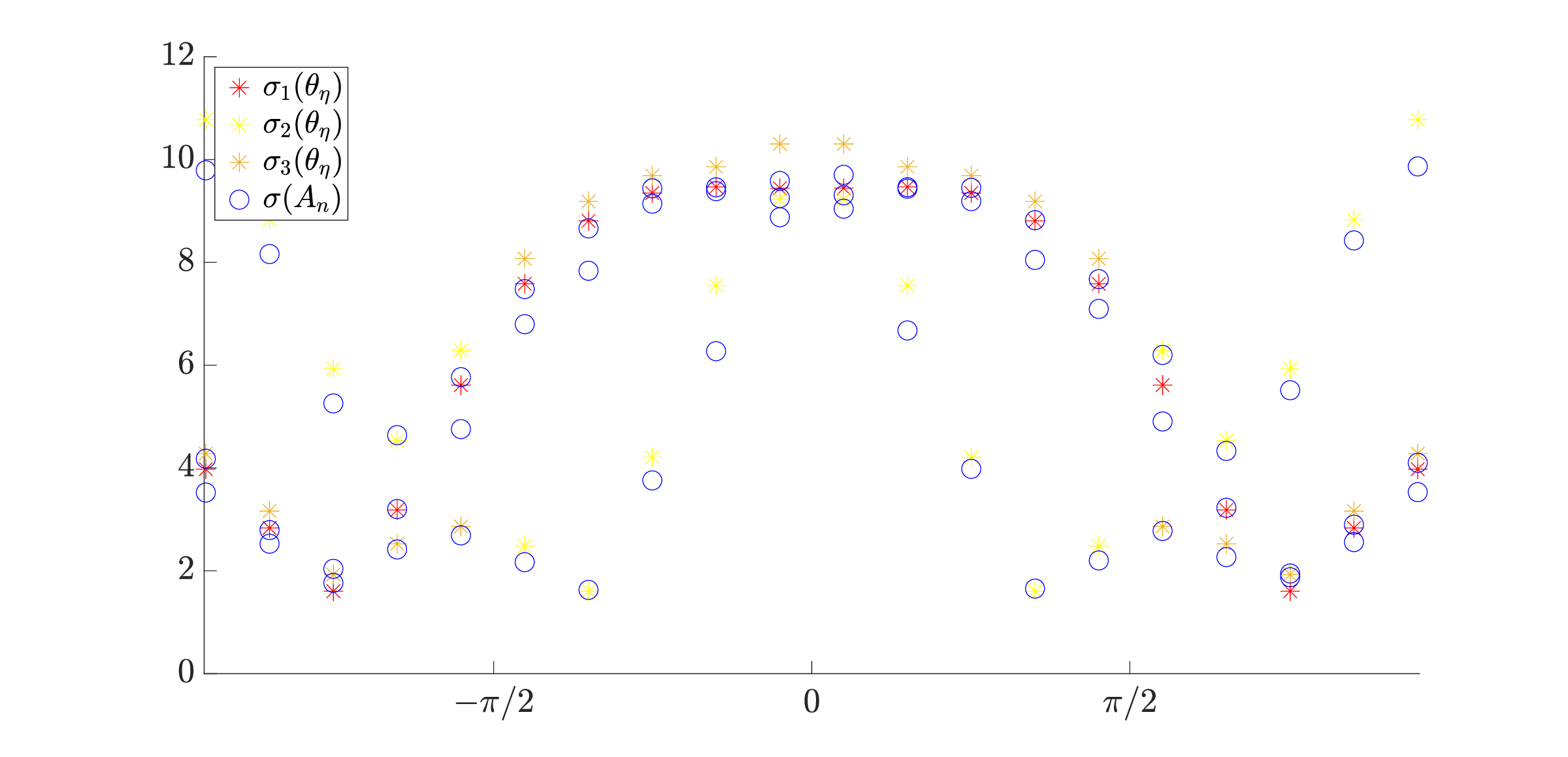}
\end{subfigure}
\begin{subfigure}[c]{.50\textwidth}
\includegraphics[width=\textwidth]{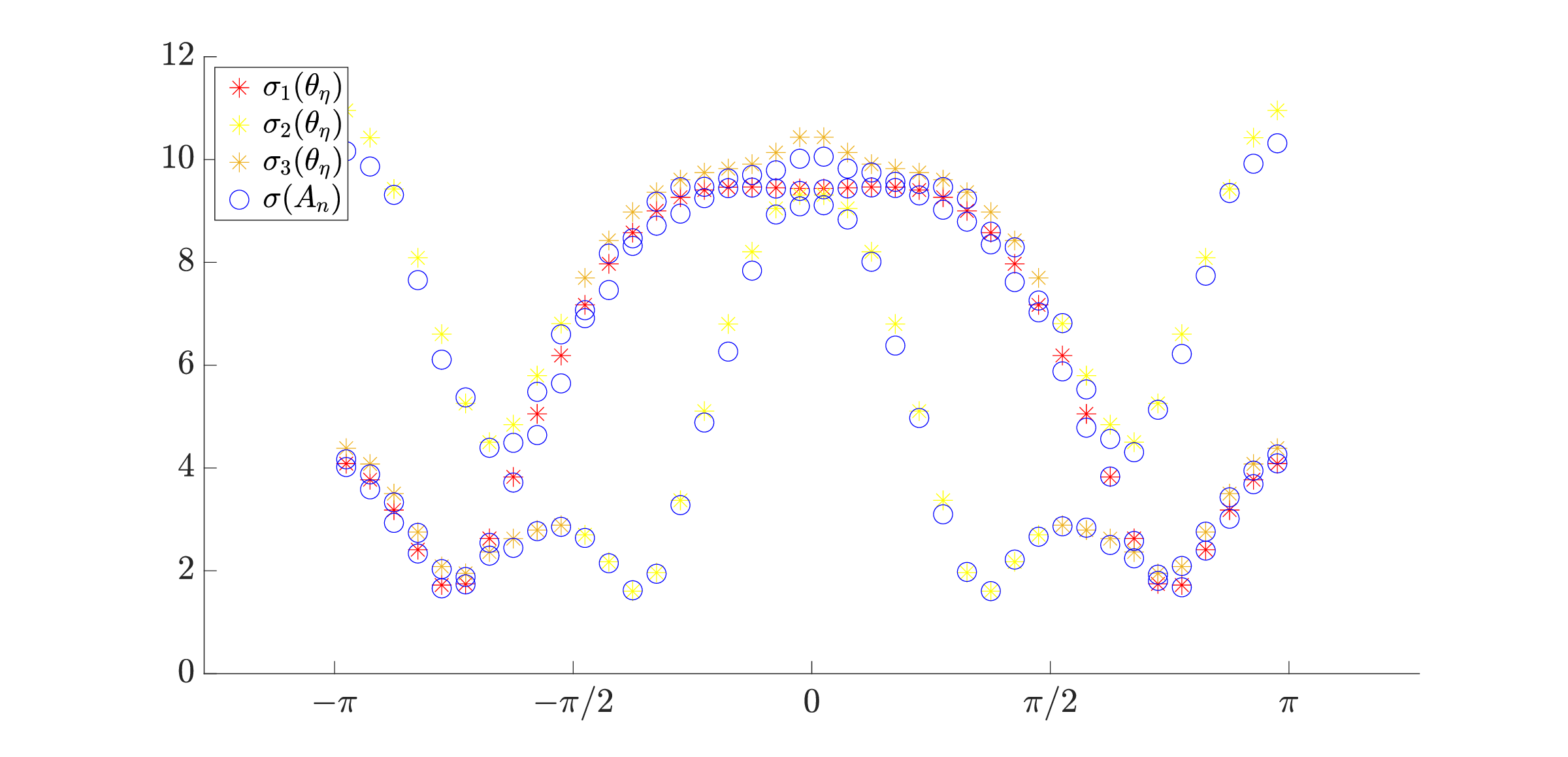}
\end{subfigure}\\
\begin{center}
\begin{subfigure}[c]{.80\textwidth}
\includegraphics[width=\textwidth]{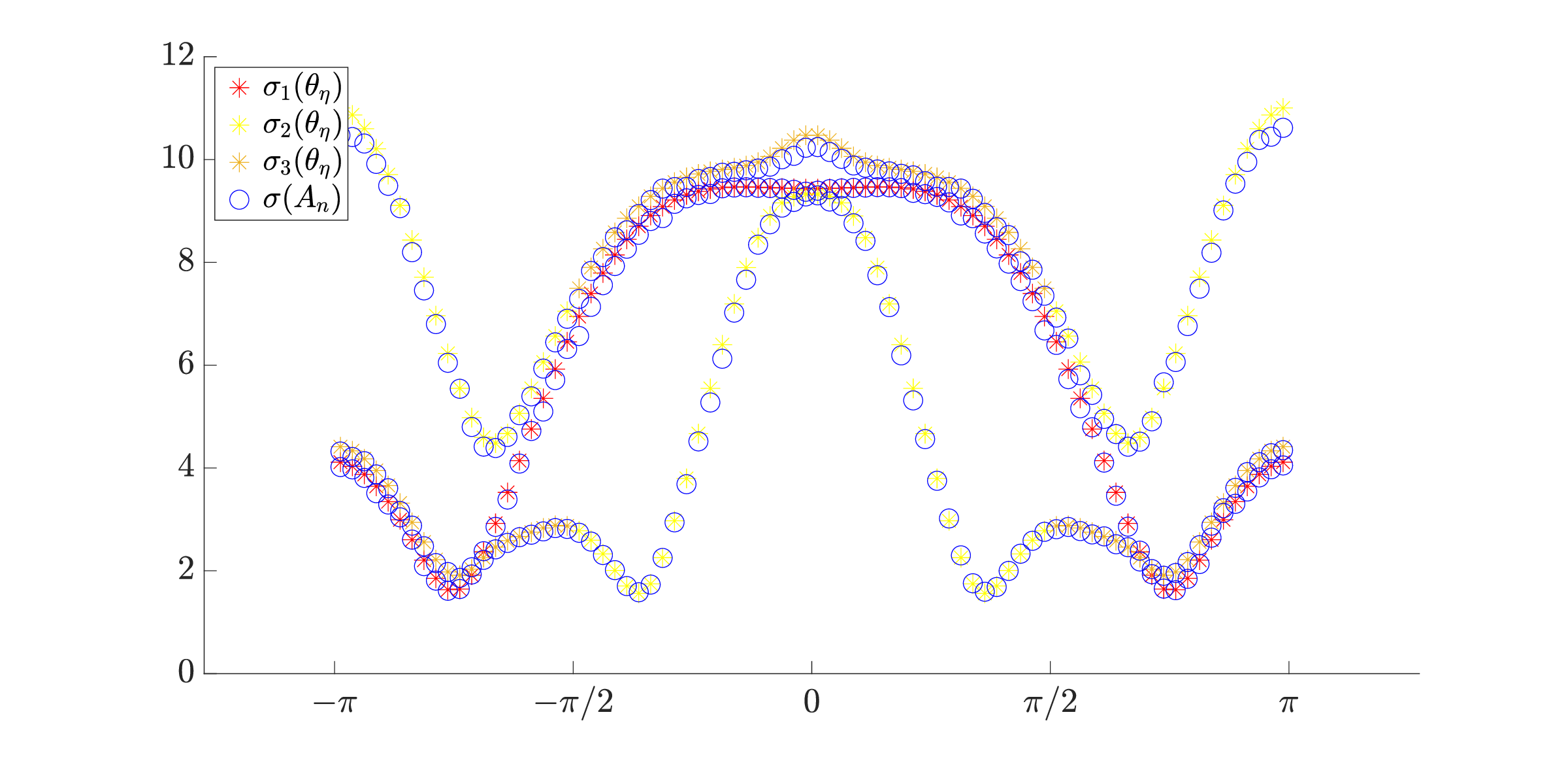}
\end{subfigure}
\end{center}
\caption{Comparison between the singular value functions $\sigma_l(F)$, $l=1,\ldots,3$  and the singular values of $A_n$, for $\nu=2$, $t=2,s=1$, $n_1=\eta$, $n_2=2\eta+4$, $\eta= 20, 40, 80 $.}
\label{fig:nu2t2s1_eig_eta_2eta4_204080}
\end{figure}

\begin{figure}[htb]
\begin{subfigure}[c]{.50\textwidth}
\includegraphics[width=\textwidth]{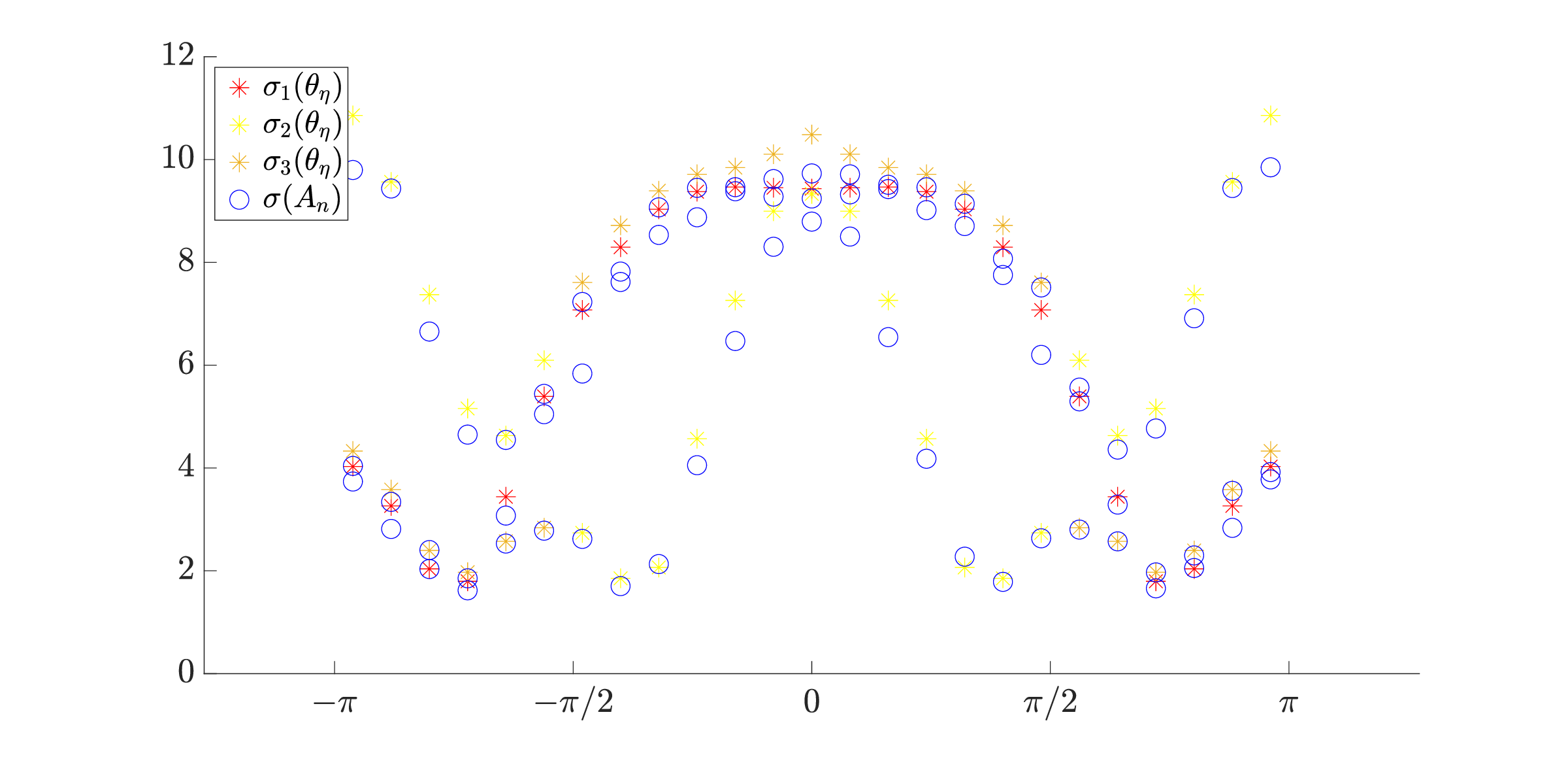}
\end{subfigure}
\begin{subfigure}[c]{.50\textwidth}
\includegraphics[width=\textwidth]{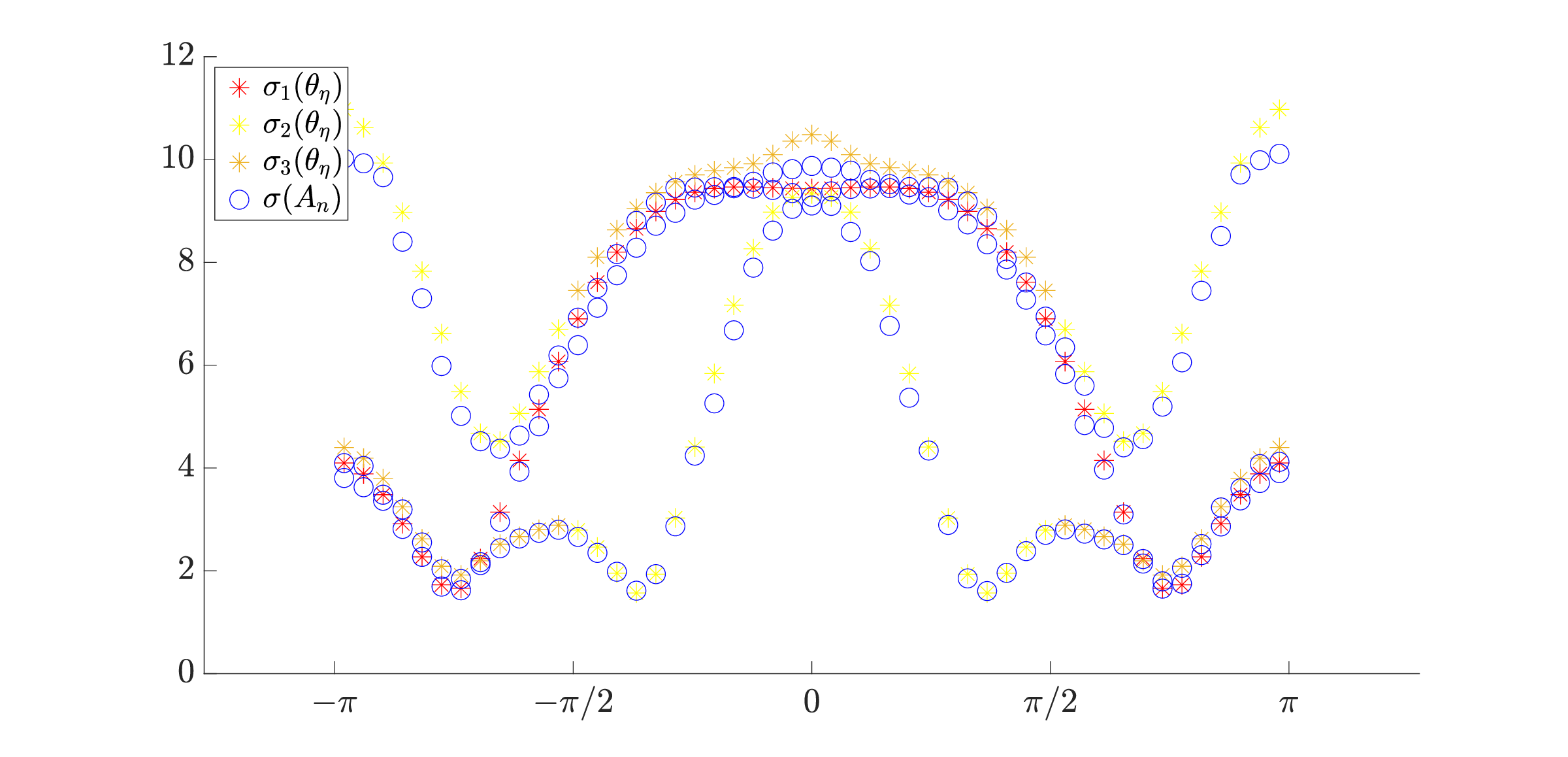}
\end{subfigure}\\
\begin{center}
\begin{subfigure}[c]{.80\textwidth}
\includegraphics[width=\textwidth]{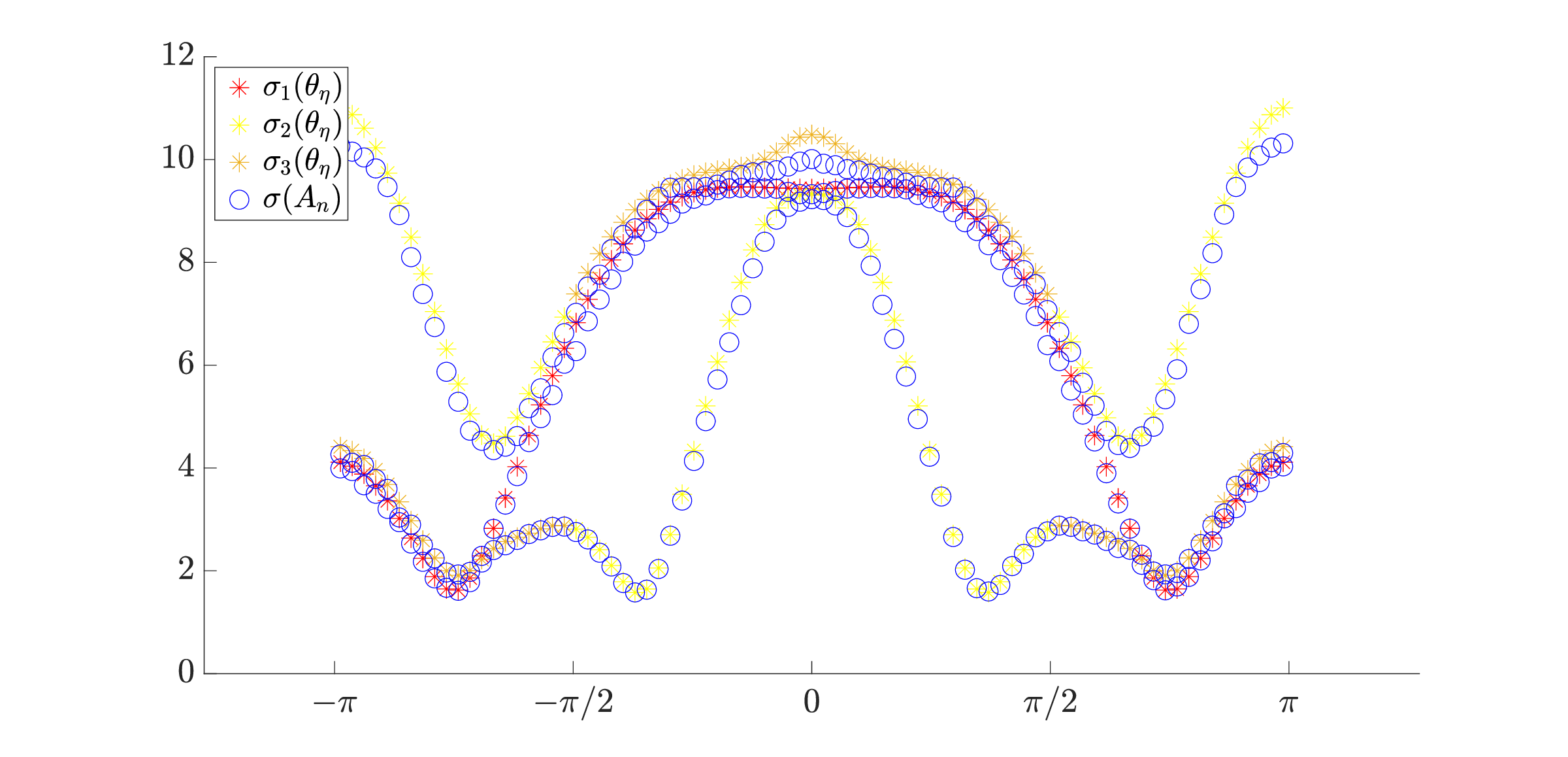}
\end{subfigure}
\end{center}
\caption{Comparison between the singular value functions $\sigma_l(F)$, $l=1,\ldots,3$  and the singular values of $A_n$, for $\nu=2$, $t=2,s=1$, $n_1=\eta$, $n_2=2\eta+\lceil{\sqrt{\eta}\,\rceil}$, $\eta= 25, 49, 81 $.}
\label{fig:nu2t2s1_eig_eta_2etasqrt_254981}
\end{figure}

\paragraph{Group 3. $3\times 3$ case with square matrix-valued $f_{i,j}$}

 For the setting $\nu=3$, $t=s=2$ we consider matrix-valued functions $f_{i,j}$, and  associated Toeplitz matrices $T_{n_i,n_j}(f_{i,j})$, that are related to PDEs discretizations problems. Additionally, we take matrix-valued functions related to symbol-based operators used to handle the numerical solution of the discrete problem. As continuous test problem we consider the 1D constant coefficients second order differential equation given by:
find $u$ such that
\begin{small}
\begin{equation}
\label{FEM_problem}
\begin{cases}
&u''(x)=\psi(x) \quad {\rm on}\, \, (0,1), \\
& u(0)=u(1)=0 ,
\end{cases}
\end{equation}
\end{small}where $\psi(x)\in L^2\left(0,1\right)$.

We select the structures and the functions obtained in the following contexts:

\begin{enumerate}
\item[3.1)]  The Finite Differences (FD) discretization of (\ref{FEM_problem}) which produces a  Toeplitz  matrix generated by $f(\theta)=2-2\cos\theta$ and, equivalently, the related $d=2$ matrix-valued version \cite{huckle}: $${f}^{[d]}(\theta)= \hat{{f}}_0^{[d]}+\hat{{f}}_
{-1}^{[d]}{\rm e}^{-\iota\theta}+\hat{{f}}_1^{[d]}{\rm e}^{\iota\theta},$$
where
\[\hat{{f}}_0^{[d]}=T_d(2-2\cos\theta), \quad \hat{{f}}_{-1}^{[d]}=-e_de_1^T, \quad \hat{{f}}_1^{[d]}=(\hat{{f}}_{-1}^{[d]})^T=-e_1e_d^T.\]
\item[3.2)] The $\mathbb{Q}_{2} $ Lagrangian FEM approximation of (\ref{FEM_problem}), then the related scaled stiffness matrix is  the Toeplitz matrix generated by ${f}_{\mathbb{Q}_2}$ \cite{qp}:
\begin{equation*}
\begin{split}
{f}_{\mathbb{Q}_2}(\theta)=&\frac{1}{3}\left(\begin{bmatrix}
16 & -8 \\
-8 &14
\end{bmatrix}+ \begin{bmatrix}
0 & -8 \\
0 &1
\end{bmatrix}{\rm e}^{\iota\theta}+\begin{bmatrix}
0 & 0\\
-8 &1
\end{bmatrix} {\rm e}^{-{\iota}				\theta}\right)
\end{split}
\end{equation*}
\item [3.3)] The B-Spline discretization of the problem (\ref{FEM_problem}) for different values of degree $p$ and regularity $k$. Here we only consider the pairs
$(p,k)$ equal to $(2,0)$ and $(3,1)$. The obtained structures $A_{p,k}$ are low-rank corrections of the Toeplitz matrices generated by the following functions \cite{tom}:
\begin{align*}
&{f}^{(2,0)}(\theta)=\frac{1}{3}\left(\begin{bmatrix}
4 & -2 \\
-2 &8
\end{bmatrix}+ \begin{bmatrix}
0 & -2 \\
0 &-2
\end{bmatrix}{\rm e}^{\iota\theta}+\begin{bmatrix}
0 & 0\\
-2&-2
\end{bmatrix} {\rm e}^{-{\iota}\theta}\right),\\
&{f}^{(3,1)}(\theta)=\frac{1}{40}\left(\begin{bmatrix}
48 & 0 \\
0&48
\end{bmatrix}+ \begin{bmatrix}
-15& -15 \\
-3 &-15
\end{bmatrix}{\rm e}^{\iota\theta}+\begin{bmatrix}
-15 & -3\\
-15 &-15
\end{bmatrix} {\rm e}^{-{\iota}\theta}\right).
\end{align*}

\item[3.4)] The classical multigrid grid transfer operator obtained following the geometric approach  for finite elements \cite{MR4389580,braess,MR4284081,FRTBS}. In particular the optimality of the algebraic Two Grid method (TGM) is obtained, for degree 2, using  $
P_{n,k}^{{2}}$ associated with

\[{p}_{\mathbb{Q}_2}(\theta)= \begin{bmatrix}
3/4 & 3/8\\
    0 & 1\\
\end{bmatrix}+ \begin{bmatrix}
0& 3/8 \\
0 &0
\end{bmatrix}{\rm e}^{\iota\theta}+\begin{bmatrix}
3/4 & -1/8\\
1 &0
\end{bmatrix} {\rm e}^{-{\iota}\theta}
+\begin{bmatrix}
0 & -1/8\\
0 &0
\end{bmatrix} {\rm e}^{-2{\iota}\theta}.
\]

\end{enumerate}

We then construct the matrix-sequence $\{A_n\}_n$ where
\[A_n=\begin{bmatrix}
T_{n_1}(f_{1,1}) &  T_{n_1,n_2}(f_{1,2}) &  T_{n_1,n_3}(f_{1,3}) \\
T_{n_2,n_1}(f_{2,1}) &  T_{n_2}(f_{2,2})&  T_{n_2,n_3}(f_{2,3})\\
T_{n_3,n_1}(f_{3,1}) &  T_{n_3,n_2}(f_{3,2})&  T_{n_3}(f_{3,3})
\end{bmatrix},\]
with:
\begin{equation}
\label{eq:example_PDE1}
\begin{split}
& f_{1,1}(\theta)= {f}_{\mathbb{Q}_2}(\theta); \quad f_{1,2}(\theta)= f_{2,1}(\theta)= {f}^{(2,0)}(\theta);  \quad f_{1,3}(\theta)= f_{3,1}(\theta)= {f}^{(3,0)}(\theta); \\
& f_{2,2}(\theta)={p}_{\mathbb{Q}_2}^*(\theta){p}_{\mathbb{Q}_2}(\theta)+{p}_{\mathbb{Q}_2}^*(\theta+\pi){p}_{\mathbb{Q}_2}(\theta+\pi);\\
&f_{2,3}(\theta)= f_{3,2}(\theta)= {p}_{\mathbb{Q}_2}(\theta); \quad f_{3,3}(\theta)= {f}^{[d]}(\theta).
\end{split}
\end{equation}

For the selected sequence we prove numerically the distribution result
\[\{A_n\}_n\sim_{\sigma}F,\]
with $F$ being the matrix-valued function with the structure shown in  Theorem \ref{th:general}. Namely,
\[F=
\left[\begin{array}{cc |c |cccc}
f_{1,1} &0       & f_{1,2}& f_{1,3} &0      &0 &0  \\
 0     & f_{1,1} & 0     & 0      & f_{1,3}&0 & 0  \\
\hline
f_{2,1} &0       & f_{2,2}& f_{2,3} &0     &0 & 0  \\
\hline
f_{3,1} &0       & f_{3,2}& f_{3,3} &0     &0 & 0  \\
0      &f_{3,1}  &0      & 0      & f_{3,3}&0&0  \\
0      &       0&  0    &0      &0      & f_{3,3}      & 0  \\
0      &       0&  0    &0      &0      & 0      & f_{3,3}  \\
\end{array}\right].
\]

In Figure \ref{fig:nu3t2s2_sv_204080} we plot the singular values of $A_n$, denoted by ${\rm \sigma(A_n)}$ for different values of $\eta= 20, 40, 80$. We rearrange the singular values into 14 groups corresponding to the  samples of the singular value functions $\sigma_l(F)$, $l=1,\dots, 14$, obtained with \textit{MATLAB} symbolic computation and evaluated on a uniform grid over the interval $[-\pi,\pi]$.
In order to obtain a perfect matching between the size of $A_n$ and the cardinality of the (equispaced) grids where to sample $\sigma_l(F)$, $l=1,\dots, 14$, we follow a specific choice: for $\sigma_i(F)$, $i=1,\dots,9$, we have
$$\theta_{\eta/2}=\left\{-\frac{\pi(\eta/2)}{(\eta/2)+1}+\frac{2j\pi(\eta/2)}{(\eta/2+1)((\eta/2-1)}, \, j=0,\dots, \frac{\eta}{2}-1\right\}$$
having $\eta/2$ points, while for the remaining $\sigma_i(F)$, $i=10,\dots,14$, we use the grid $\theta_{\eta/2-1}$ having cardinality $\eta/2-1$.
Given the asymptotic notions of distribution, any asymptotically uniform grid-sequence is equivalent \cite{au2,au1}. However, for a fixed matrix-size an optimal grid can give a more precise indication of the behavior of the eigenvalues or singular values. For more discussions and numerical evidences regarding optimal grid-sequences, especially in connection with the concepts of momentary symbols the reader is referred to \cite{mom3,mom1,mom2}.

\begin{figure}[htb]
\begin{subfigure}[c]{.50\textwidth}
\includegraphics[width=\textwidth]{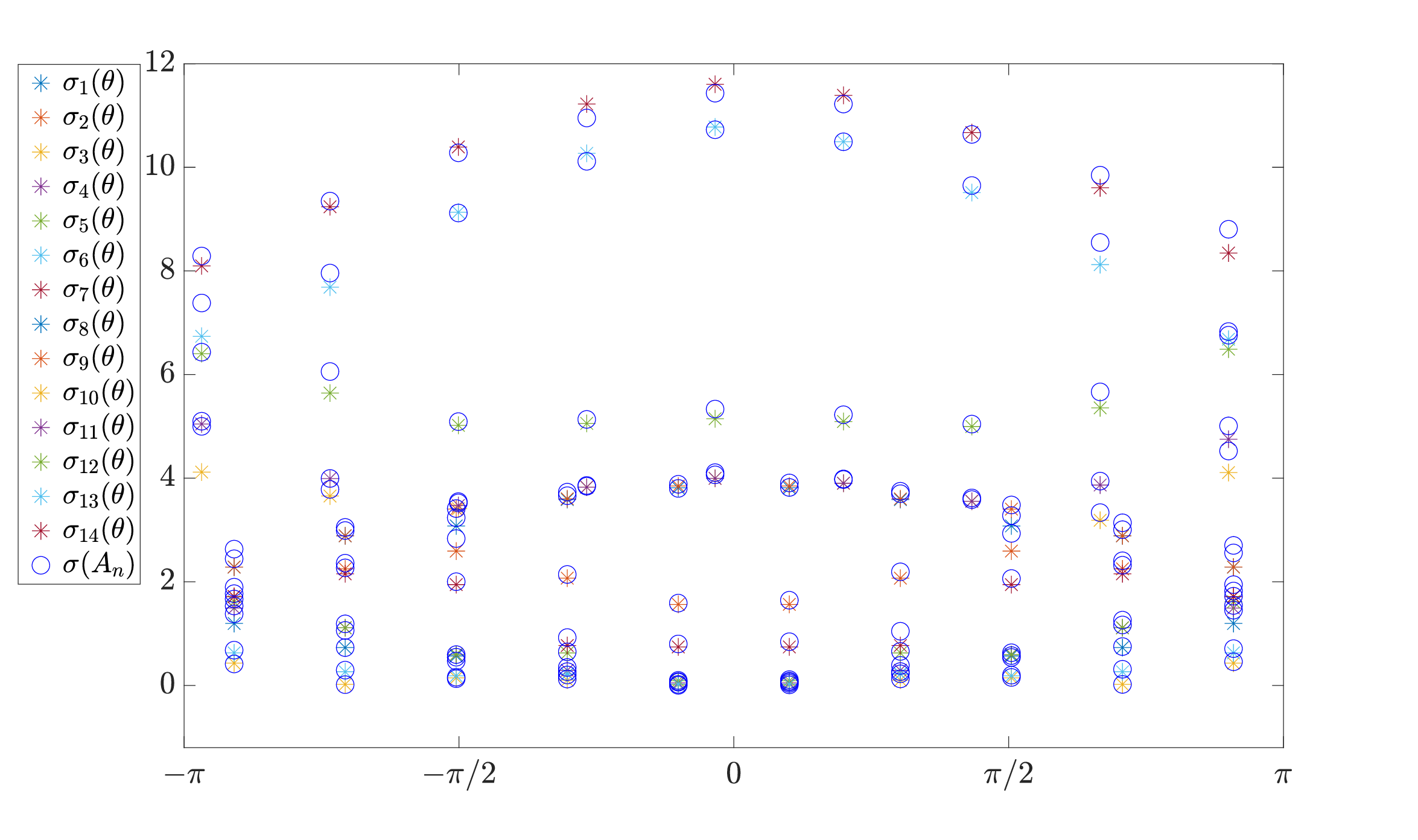}
\end{subfigure}
\begin{subfigure}[c]{.50\textwidth}
\includegraphics[width=\textwidth]{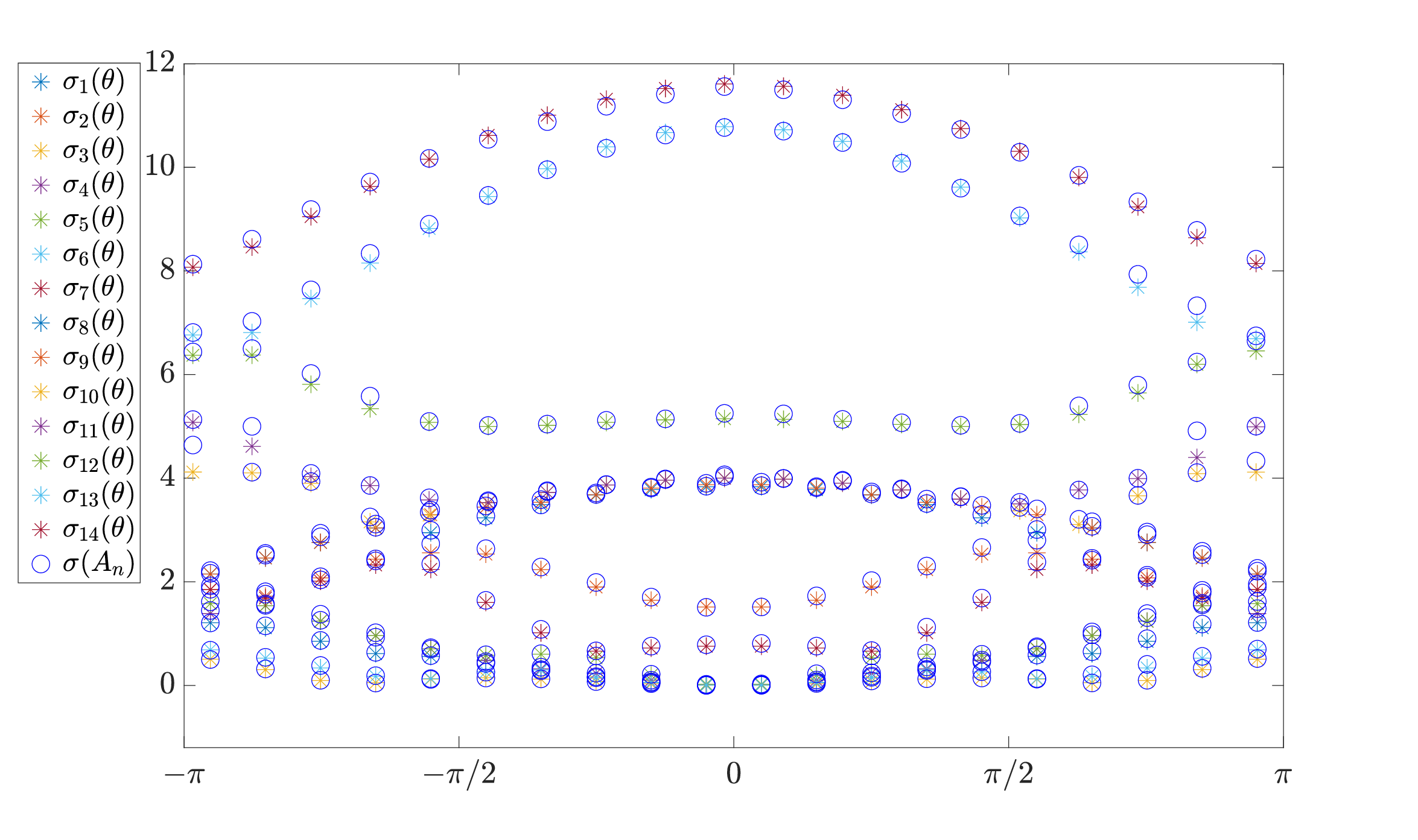}
\end{subfigure}\\
\begin{center}
\begin{subfigure}[c]{.80\textwidth}
\includegraphics[width=\textwidth]{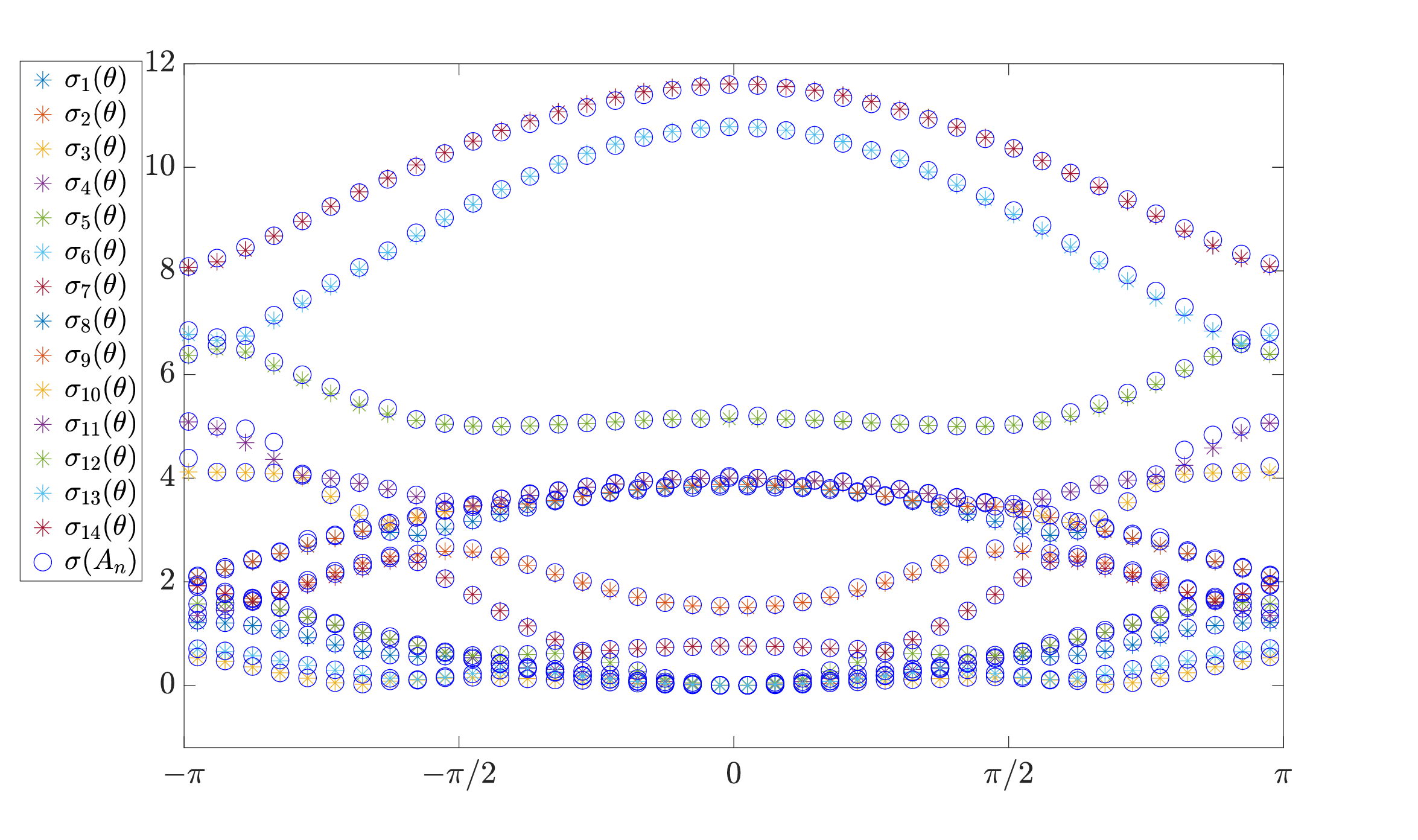}
\end{subfigure}
\end{center}
\caption{Comparison between the singular value functions of $F$, $\sigma_l(\theta)$, $l=1,\ldots,14$  and the singular values of $A_n$, for $\nu=3$, $t,s=2$, $n_1=\eta$, $n_2=\eta/2$, $n_3=2\eta-2$, $\eta= 20, 40, 80 $.}
\label{fig:nu3t2s2_sv_204080}
\end{figure}

We can observe that the singular values mimic, up to outliers, the considered samplings and the approximation improves as $n\rightarrow \infty$, as predicted by the theory.

As an additional experiment we mention that  if we substitute in (\ref{eq:example_PDE1}) the choice $f_{2,3}(\theta)= f_{3,2}(\theta)= {p}_{\mathbb{Q}_2}(\theta)$ with $ f_{3,2}(\theta)= {p}_{\mathbb{Q}_2}(\theta)$ and $f_{2,3}(\theta)=f_{3,2}^*(\theta)$, then we obtain $F=F^*$ and in this case we also have a spectral distribution result:
\[\{A_n\}_n\sim_{\lambda}F.\]
The numerical validation can be observed in Figure \ref{fig:nu3t2s2_eig_204080} with the comparison of the eigenvalues of $A_n$, denoted by ${\rm eig}$, and the eigenvalue functions of $F$, $\lambda_l(\theta)$, $l=1,\dots, 14$, evaluated over an uniform grid over $[-\pi,\pi]$, for different values of $\eta= 20, 40, 80$.

\begin{figure}[htb]
\begin{subfigure}[c]{.50\textwidth}
\includegraphics[width=\textwidth]{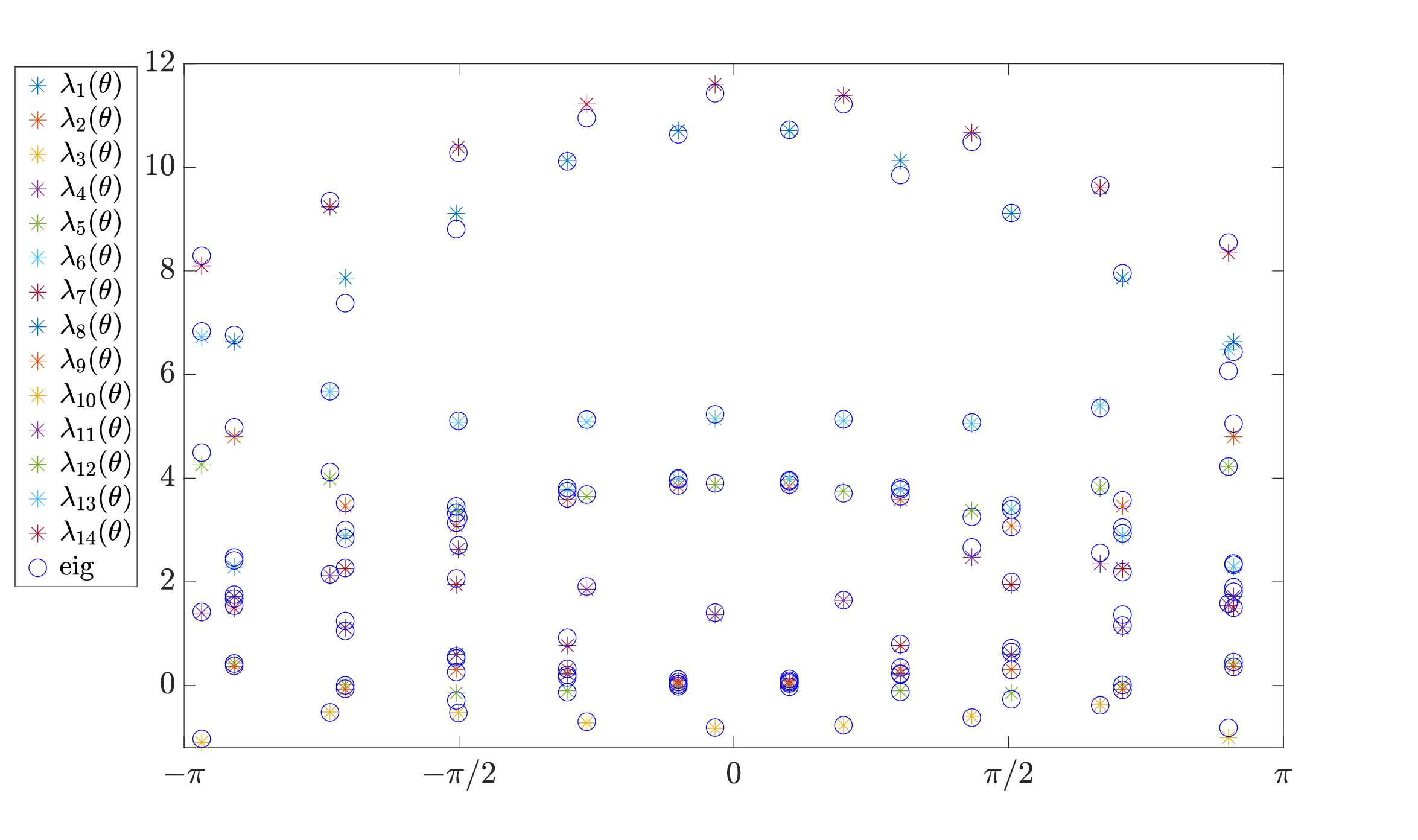}
\end{subfigure}
\begin{subfigure}[c]{.50\textwidth}
\includegraphics[width=\textwidth]{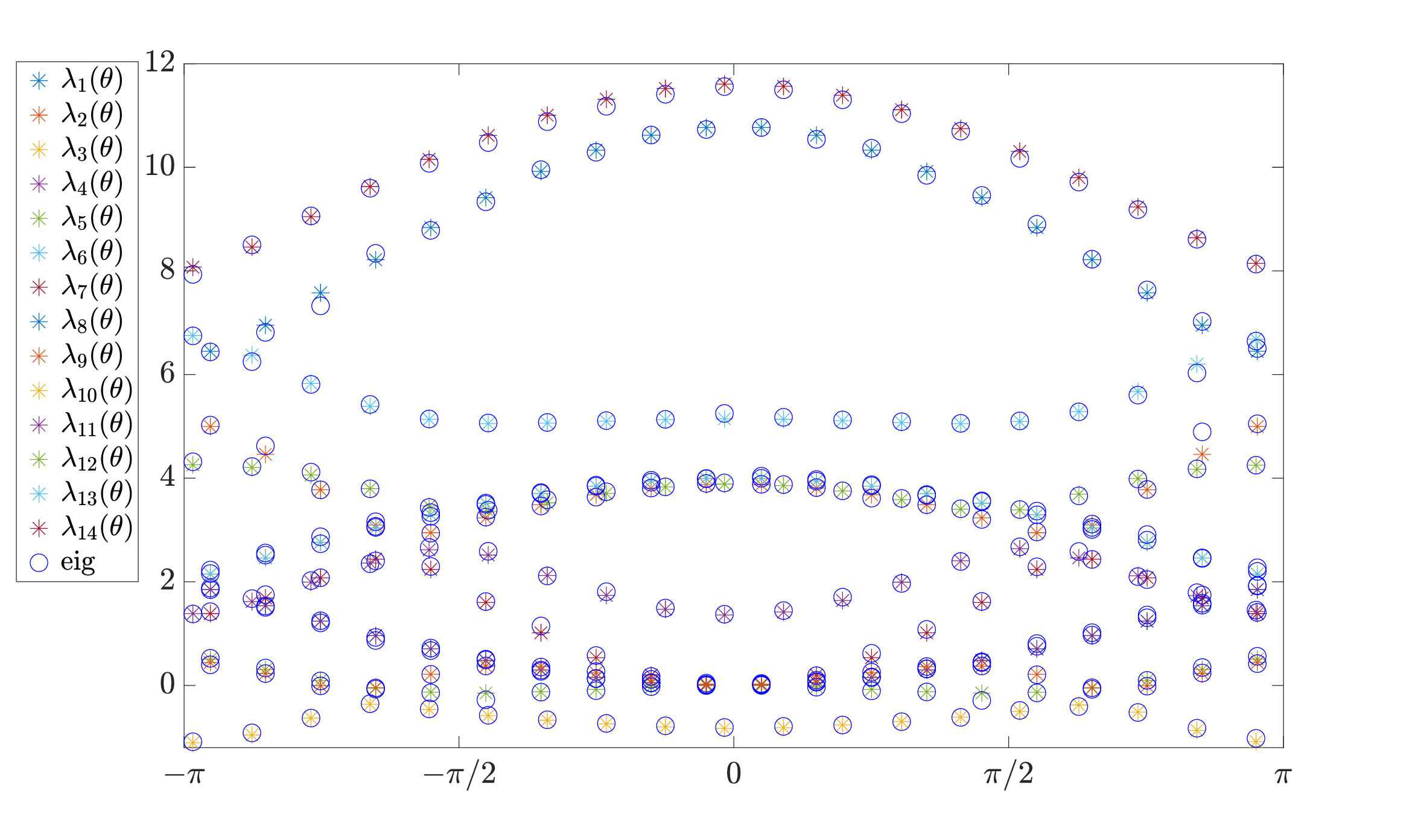}
\end{subfigure}\\
\begin{center}
\begin{subfigure}[c]{.80\textwidth}
\includegraphics[width=\textwidth]{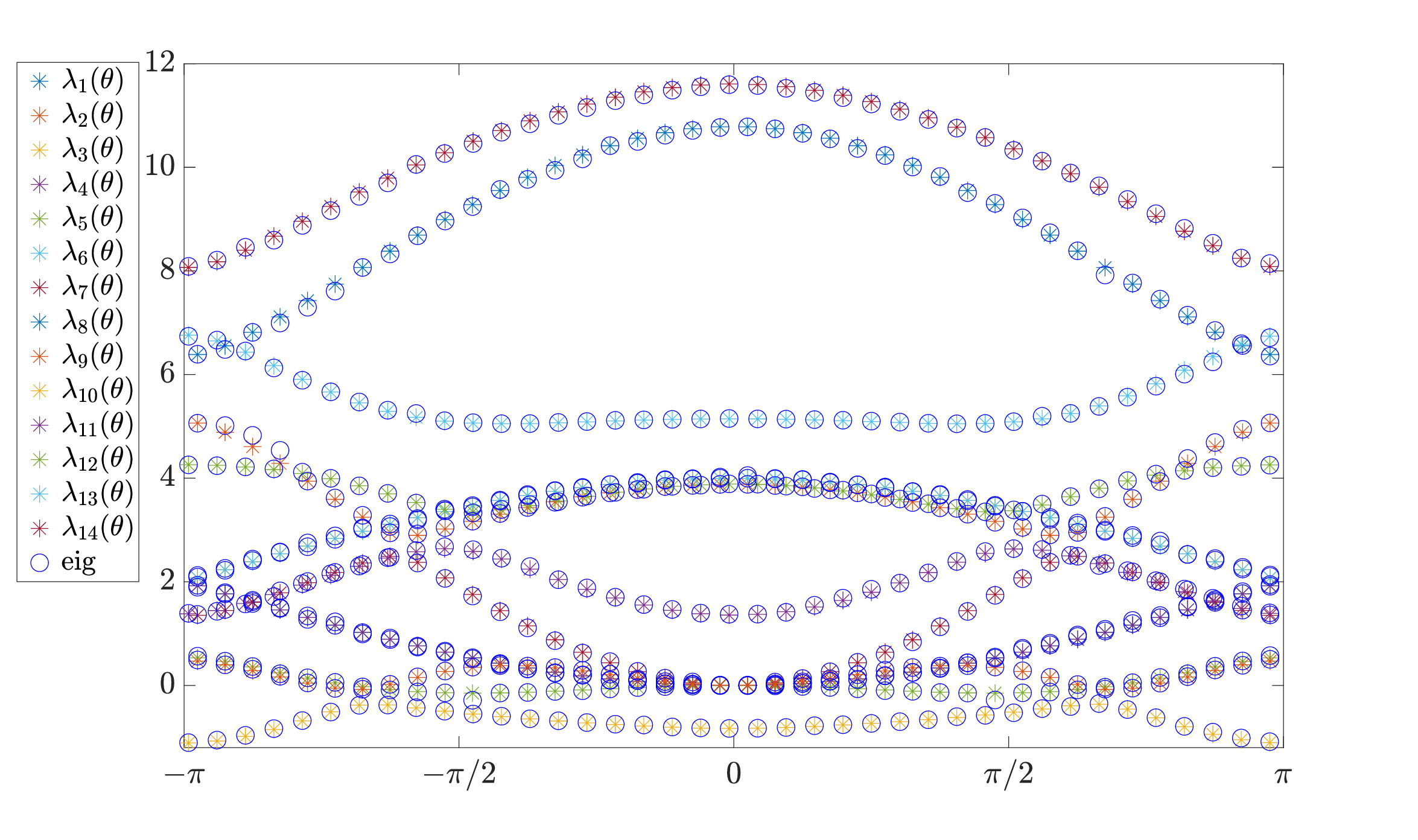}
\end{subfigure}
\end{center}
\caption{Comparison between the eigenvalue functions $\lambda_l(F)$, $l=1,\ldots,14$  and the eigenvalues of $A_n$, for $\nu=3$, $t,s=2$, $n_1=\eta$, $n_2=\eta/2$, $n_3=2\eta-2$, $\eta= 20, 40, 80 $.}
\label{fig:nu3t2s2_eig_204080}
\end{figure}

\section{Conclusions}\label{sec:fin}

We have provided a general framework for studying the asymptotic Weyl distributions in the singular value and eigenvalue sense of matrix-sequences in block form. We have dealt in detail the case where the matrix-sequences associated to the blocks are of block unilevel Toeplitz type. Visualizations and numerical tests have been presented and critically discussed.

There are many directions for expanding the theoretical framework, including investigating the conjecture in Remark  \ref{rmrk:conjecture} and examining also the case where the number of row blocks and column blocks are different, so adding a new level in the rectangular structure of $A_n$.

However, two main steps remain to be performed.
\begin{itemize}
\item The primary open challenge lies in generalizing the results to the multilevel setting, enabling the treatment of approximations for multidimensional differential or fractional differential problems with variable coefficients.
Such a generalization can follow various possible directions and has to include the case where the blocks are of block multilevel Toeplitz type  or derived from block multilevel GLT matrix-sequences. This will involve notational and technical challenges that need to be handled with care and precision.
\item The distributional information can be used to design fast preconditioned Krylov methods for solving the related large linear systems.
In particular, we anticipate that the construction of various preconditioners is somehow implicitly written in the proof of Proposition \ref{prop:equivalence 2}, when considering $\hat{A}_n$. In fact, $\hat{A}_n$ is the important and much sparser part of $A_n$, once the terms associated to zero-distributed matrix sequences have been removed; see \cite{blocking - num}. This simplified structure and the matrix-algebra approximations of its blocks represent the key ingredients for the construction of preconditioneed Krylov solvers and preconditioned multi-iterative methods \cite{MR3689933,dumb,multi-iter}, insuring clustered spectra or clustered singular values, also in a block multilevel Toeplitz/GLT setting.
The precise analysis and the associated numerical performance constitute a worthwhile research direction to be considered in a near future, whose starting point is represented by the current work and by the work in \cite{blocking - num}.
\end{itemize}

\section*{Acknowledgements}

 All the authors are members of ``Gruppo Nazionale per il Calcolo Scientifico" (INdAM-GNCS). The work is partially supported by INdAM - GNCS Project ``Analisi e applicazioni di matrici strutturate (a blocchi)"  CUP E53C23001670001.

The work is funded from the European High-Performance Computing Joint Undertaking  (JU) under grant agreement No 955701. The JU receives support from the European Union’s Horizon 2020 research and innovation programme and Belgium, France, Germany, Switzerland.

The work of Andrea Adriani is supported by MUR Excellence Department Project MatMod@TOV awarded to the Department of Mathematics, University of Rome Tor Vergata, CUP E83C23000330006.

The work of Isabella Furci is supported by $\#$NEXTGENERATIONEU (NGEU) and funded by the Ministry of University and Research (MUR), National Recovery and Resilience Plan (NRRP), project MNESYS (PE0000006) - A Multiscale integrated approach to the study of the nervous system in health and disease (DN. 1553 11.10.2022).

Furthermore Stefano Serra-Capizzano is grateful for the support of the Laboratory of Theory, Economics and Systems – Department of Computer Science at Athens University of Economics and Business and to the "Como Lake center for AstroPhysics” of Insubria University.


\end{document}